\documentclass[11pt]{amsart}

\usepackage[a4paper,margin=1.15in]{geometry}
\usepackage{amsmath,amssymb,amsthm,amsfonts}
\usepackage{mathrsfs}
\usepackage{mathtools}
\usepackage{enumitem}
\usepackage{hyperref}
\usepackage[nameinlink,capitalise]{cleveref}
\usepackage{tikz-cd}
\hypersetup{
    colorlinks=true,
    linkcolor=blue,
    citecolor=blue,
    urlcolor=blue
}

\theoremstyle{plain}
\newtheorem{theorem}{Theorem}[section]
\newtheorem{proposition}[theorem]{Proposition}
\newtheorem{lemma}[theorem]{Lemma}

\newtheorem{conjecture}[theorem]{Conjecture}
\theoremstyle{definition}
\newtheorem{definition}[theorem]{Definition}

\theoremstyle{remark}
\newtheorem{remark}[theorem]{Remark}
\DeclareMathOperator{\End}{End}

\DeclareMathOperator{\Id}{\textnormal{Id}}
\DeclareMathOperator{\Herm}{\textnormal{Herm}}

\title[Geometry of non-Hermitian Yang--Mills moduli spaces]{Geometry of non-Hermitian Yang--Mills moduli spaces}

\author{Xingluan Wang}
\thanks{E-mail: wxlmath@mail.ustc.edu.cn. School of Mathematical Sciences,
University of Science and Technology of China, Hefei 230026, P. R. China.}

\begin{document}

\begin{abstract}
We study the moduli space of non-Hermitian Yang--Mills
connections over a compact Kähler manifold. Using normalized harmonic metrics, we construct a natural Hermitian metric on the unobstructed locus and
show that, near the Hermitian locus, the unobstructed
locus carries an almost hypercomplex structure which is compatible with the associated Riemannian metric.

\medskip
\noindent\textbf{2020 Mathematics Subject Classification.}
Primary 53C07; Secondary  58D27.

\smallskip
\noindent\textbf{Keywords.}
non-Hermitian Yang--Mills connections, moduli spaces, Hermitian metrics,
almost hypercomplex structures.
\end{abstract}

\maketitle

\section{Introduction}

Non-Hermitian Yang--Mills (NHYM) connections were introduced by Kaledin and Verbitsky \cite{NHYM}. Let \(M\)
be a compact Kähler manifold and \(E\to M\) be a complex vector bundle.
A connection \(\nabla\) on \(E\) is called non-Hermitian Yang--Mills if its
curvature \(\Theta\) satisfies
\begin{equation}
\left\{
\begin{aligned}
\Lambda \Theta &= \lambda \,\operatorname{Id}_E,\\
\Theta &\in \Omega^{1,1}(\operatorname{End}E),
\end{aligned}
\right.
\end{equation}
for some constant \(\lambda\). This definition is standard; see \cite{Don,UY}. However, usually \(\nabla\) is
assumed to be compatible with some Hermitian metric on \(E\). This is why we
use the term ``non-Hermitian Yang--Mills'' to denote Yang--Mills connections
which are not necessarily Hermitian.

Kaledin and Verbitsky denote by
\(\mathcal M^s\) the moduli space of \((0,1)\)-stable NHYM
connections on \(E\), modulo the complex gauge group $G=\textnormal{Maps}(M, \textnormal{Aut} E)$. Here \((0,1)\)-stability
means that the holomorphic structure induced by the \((0,1)\)-part of the
connection is stable. This holomorphic bundle admits a Hermitian-Yang-Mills (HYM) connection. It is useful because the induced geometric structures  near the HYM locus are better behaved, which will be recalled below in our discussion.

The motivation for Kaledin and Verbitsky's study of NHYM connections comes from the following observation.

When \(c_1(E)=c_2(E)=0\),
HYM connections are flat \cite{Sim1}, and  the unitary flat connections are in one-to-one correspondence with those holomorphic
structures on \(E\) which make it a polystable holomorphic bundle
\cite{UY,Sim1}.

For an arbitrary bundle \(E\), the analogous correspondence is the Uhlenbeck--Yau theorem which states that polystable holomorphic structures on \(E\)
are exactly those which admit HYM connections. Thus it is natural to weaken
the flatness condition and consider instead all HYM
connections.

The moduli space of flat, but not necessarily unitary, bundles is a beautiful
subject, well studied in the literature \cite{Sim11,Sim22}. This
space has dimension twice that of the moduli space of unitary flat bundles and
carries a natural holomorphic symplectic form. Also, the generic part of
the moduli space of non-unitary flat bundles is equipped with a holomorphic
Lagrangian fibration over the space of unitary flat connections.

This analogy motivates the passage from HYM connections to NHYM connections. Namely, one keeps the HYM curvature equations but drops the Hermitian compatibility condition, just as one passes from unitary flat connections to flat connections which are not necessarily unitary. The resulting question is whether the corresponding NHYM moduli space admits analogues of the structures known in the flat case. Kaledin and Verbitsky showed that near the Hermitian locus, $\mathcal M^s$ has
dimension twice that of the moduli space of HYM connections
and is naturally equipped with a holomorphic symplectic form. As in the case of
flat bundles, the generic part of $\mathcal M^s$ admits a
holomorphic Lagrangian fibration over the space of HYM
connections \cite[Proposition 2.19, Proposition 2.25]{NHYM}.

When the base manifold is hyperkähler, it provides a further layer of structure. In this
case, the quaternionic structure induces an \(SU(2)\)-action on differential
forms. Kaledin and Verbitsky singled out autodual connections, namely
connections whose curvature is \(SU(2)\)-invariant, and showed that autodual connections
are NHYM \cite[Definition~3.6, Proposition~3.9]{NHYM}. Under the assumption that \(c_1(E)\) and \(c_2(E)\) are
\(SU(2)\)-invariant, every NHYM connection sufficiently close to the Hermitian
locus is autodual \cite[Theorem~3.11]{NHYM}. This led them to ask whether NHYM connections over hyperkähler manifolds
are necessarily autodual.

We now turn to the conjecture studied in this paper. Kaledin and
Verbitsky constructed a natural closed holomorphic two-form \(\Omega\) on the
smooth part of \(\mathcal M^s\). Since a hyperkähler
manifold, after choosing one of its complex structures, carries a canonical
holomorphic symplectic form \cite[Proposition 14.15]{Ein}, it is natural to ask whether the form \(\Omega\)
arises from a hyperkähler metric. 
\begin{conjecture}[\textnormal{\textbf{\cite[Conjecture 8.1]{NHYM}}}]
There exists a hyperkähler metric on \(\mathcal M^s\) such that \(\Omega\) is the
associated holomorphic symplectic form.
\end{conjecture}
When \(\dim_{\mathbb C}M=1\), the type condition is automatic and the NHYM
equation reduces to projective flatness. In the degree-zero case, this is the usual flatness equation, which is already well understood \cite{fujiki}. Therefore, in this paper we focus on the case \(\dim_{\mathbb C}M>1\). Note that the construction of the form \(\Omega\) is completely parallel to
the construction of a holomorphic symplectic form on the Hitchin--Simpson
moduli space \(\mathcal M^s_{\mathrm{DR}}\) of flat connections on \(E\)
\cite{Sim11,Sim22}. The analogue of the above conjecture for \(\mathcal M^s_{\mathrm{DR}}\) is
known.

Kaledin and Verbitsky state that there is also a formal reason for this expectation. After choosing a Hermitian
metric on the underlying bundle, the space \(\mathcal A\) of all connections
carries a natural infinite-dimensional hyperkähler structure, and the complex
gauge group acts compatibly with this structure. The corresponding complex
moment map is given by
$
\mu_\mathbb C(\nabla)=\Lambda\nabla^2,
$
so that it is exactly one of the NHYM equations. 

The hyperkähler reduction viewpoint also suggests an analogue of the Uhlenbeck–Yau theorem for NHYM bundles. This leads to the following conjecture.

\begin{conjecture}[\textnormal{\textbf{\cite[Conjecture 8.7]{NHYM}}}]
Let \(( E,\nabla)\) be a bundle with NHYM connection \(\nabla\).
Then there exists a harmonic metric \(h\) on \(E\) if and only if
\(E\) is a direct sum of \(\nabla\)-stable bundles.
Also, if \(E\) itself is \(\nabla\)-stable, then \(h\) is unique up to a constant factor.

\end{conjecture}

Pan, Shen, and Zhang later proved this conjecture in \cite{zhang}. Their result is the NHYM analogue of harmonic metrics on flat bundles \cite{Cor,Don1}. Their theorem provides the real moment map $\mu_\mathbb R(\nabla)=\Lambda \Xi$ required for hyperkähler reduction. However, this picture should be regarded as a heuristic rather than as a completed conclusion. In the NHYM setting, the relevant complex gauge action does not preserve any fixed background metric, and the formal ambient hyperkähler structure does not immediately descend to $\mathcal M^s$. We defer a detailed analysis of this approach to the final section.

Rather than giving a direct hyperkähler interpretation of the conjecture, we separate the two ingredients which would enter such a structure. Namely, we study the existence of a Hermitian metric on the unobstructed locus and, independently, the existence of an almost hypercomplex structure near the Hermitian locus. Our approach is modeled on Itoh’s construction \cite{Itoh2}. The relevant geometric objects are first constructed on local slices of the moduli space, then we check their compatibility with the transition maps on overlaps, so that the local constructions glue to globally defined
objects.

The main results are the following.

\begin{theorem}
Let \(M\) be a compact Kähler manifold and let \(E\to M\) be a complex vector bundle.
Let $\widehat{\mathcal M}^s$ denote the unobstructed locus of the moduli space of
\((0,1)\)-stable NHYM connections. Then $\widehat{\mathcal M}^s$ carries a
natural Hermitian metric.
\end{theorem}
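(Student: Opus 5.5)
The plan is to follow Itoh's construction \cite{Itoh2}: define an inner product on the tangent space at each point of $\widehat{\mathcal M}^s$ through harmonic representatives in a local slice, normalized by a canonical metric on $E$, and then check that the construction is gauge invariant so that it descends to the quotient.

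\textbf{Tangent space.} At a point $[\nabla]$ with $\nabla$ a $(0,1)$-stable NHYM connection, write the linearized NHYM complex. The infinitesimal gauge action is $\xi\mapsto\nabla\xi$ on $\Omega^0(\End E)$. The linearized equations for $a\in\Omega^1(\End E)$ are
\[
(\nabla a)^{0,2}=0,\qquad (\nabla a)^{2,0}=0,\qquad \Lambda\nabla a=c\,\Id_E .
\]
Unobstructedness means the second cohomology of this elliptic complex vanishes. By the Kuranishi construction, a neighbourhood of $[\nabla]$ in $\widehat{\mathcal M}^s$ is then a smooth manifold, and $T_{[\nabla]}\widehat{\mathcal M}^s\cong H^1$.

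\textbf{Choice of metric on $E$.} To realize $H^1$ by harmonic forms we need a Hermitian metric on $E$, which the complex gauge group does not preserve. Since $(0,1)$-stability implies $\nabla$-stability, \cite{zhang} provides a harmonic metric $h_\nabla$, unique up to a positive constant. We fix the constant by a normalization, for instance $\int_M \log\det(h_\nabla/h_0)\,dvol=0$ for a fixed reference metric $h_0$, or a condition on $\det h$ compatible with the determinant connection. This gives a normalized harmonic metric $h_\nabla$ depending only on $\nabla$.

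\textbf{Definition of the metric.} With $h_\nabla$ and the Kähler metric we obtain $L^2$ inner products and adjoints. Represent each class in $H^1$ by its unique harmonic representative $a$, that is, $a$ closed under the linearized operator above with $\nabla^{*_{h_\nabla}}a=0$; Hodge theory for elliptic complexes gives existence and uniqueness. Define
\[
g([a],[b])=\operatorname{Re}\int_M\langle a,b\rangle_{h_\nabla},\qquad \omega([a],[b])=g(I[a],[b]),
\]
where $I$ is the complex structure on $H^1$ induced by the complex structure of $\mathcal A$ as an affine space (multiplication by $i$ on $\Omega^1(\End E)$, which preserves the linear equations). The Hermitian form is $H=\int\langle a,b\rangle_{h_\nabla}$. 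It is positive definite and invariant under $I$, hence Hermitian.

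\textbf{Gauge invariance and smoothness.} For $g\in G$ one has $h_{g\cdot\nabla}=g_*h_\nabla$: pushforward preserves harmonicity, and the normalization is preserved up to the constant $\int\log|\det g|^2$. This is the delicate point. I would fix it by normalizing pointwise-invariantly, for example by requiring the harmonic metric to induce a fixed flat, or harmonic, metric on $\det E$ up to a constant, and by quotienting by the scaling that acts trivially on $\End E$. Indeed, $\langle a,b\rangle_h$ on $\End E$ is invariant under $h\mapsto ch$, so the constant ambiguity disappears automatically. This observation removes the normalization issue. Then $g\mapsto\operatorname{Ad}_g$ maps harmonic representatives for $(\nabla,h_\nabla)$ isometrically to those for $(g\cdot\nabla,h_{g\cdot\nabla})$. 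Hence the inner product is independent of the slice, and the transition maps between Kuranishi charts are isometries.

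\textbf{Main obstacle.} Smoothness in $\nabla$ is where I expect the real work. We must show that $h_\nabla$ depends smoothly on $\nabla$ in the slice, and that the harmonic projection does too. For the first, I would apply the implicit function theorem to the harmonic-metric equation; its linearization, modulo constants, is an invertible Laplace-type operator on $\Herm(E)$ because $\nabla$ is simple. For the second, the dimension of $H^1$ is locally constant on the unobstructed locus, so the harmonic projection varies smoothly by standard elliptic theory.
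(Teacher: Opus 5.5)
Your proposal is correct and follows essentially the same route as the paper. It uses the same ingredients: normalized harmonic metrics $h^{\mathrm n}_\nabla$ from Pan--Shen--Zhang, the $L^2$ pairing of harmonic representatives, gauge equivariance $h^{\mathrm n}_{u\cdot\nabla}=c(u,\nabla)\,u^*h^{\mathrm n}_\nabla$ with the scalar dropping out of the pairing on $\End E$-valued forms, and smoothness via the implicit function theorem, whose linearization is invertible modulo constants because $\nabla$ is simple.

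The differences are cosmetic: you work intrinsically on $H^1(\mathcal C_\nabla)$ rather than on slice tangent spaces via $\tau_*\beta=d_{1,\nabla'+\alpha'}\psi+u_\alpha\cdot\beta$. One correction that does not affect your argument: unobstructedness must be read with the trace-free complex $\widetilde{\mathcal C}_\nabla$, because the trace part of the full complex always contributes to $H^2$ (e.g.\ the constants in the $\Omega^0$-slot).
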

\begin{theorem}
There exists a neighborhood of the Hermitian locus in $\widehat{\mathcal M}^s$ such that the neighborhood carries an almost hypercomplex structure
$(\mathcal I_\alpha,\mathcal J_\alpha,\mathcal K_\alpha)$ compatible with the associated Riemannian metric.
\end{theorem}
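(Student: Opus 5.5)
The construction follows Itoh's approach. Since the Hermitian metric is already available on \(\widehat{\mathcal M}^s\) by the first theorem, the plan is to define the three almost complex structures on local slices, check that the defining identities hold there, and then verify compatibility with the transition maps so that everything glues. At a point \([\nabla]\in\widehat{\mathcal M}^s\) I use the normalized harmonic metric \(h\) of Pan--Shen--Zhang to split \(\nabla=D_h+\Psi_h\), where \(D_h\) is \(h\)-unitary and \(\Psi_h\) is \(h\)-self-adjoint. The tangent space is then modelled on the harmonic space
\[
\mathcal H^1_\nabla=\{\dot\nabla\in\Omega^1(\End E):\ d_\nabla^{1,1}\text{-linearized NHYM equations hold},\ \nabla^{*_h}\dot\nabla=0,\ \text{orthogonality to the real gauge direction}\}.
\]
This is the slice on which the Hermitian metric \(g(\dot\nabla_1,\dot\nabla_2)=\int_M\langle\dot\nabla_1,\dot\nabla_2\rangle_h\,\omega^n/n!\) was defined.

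On \(\Omega^1(\End E)\) there are natural pointwise operators. The first is \(\mathcal I\), induced by the complex structure of \(M\): it multiplies \(\Omega^{1,0}\) by \(-\sqrt{-1}\) and \(\Omega^{0,1}\) by \(\sqrt{-1}\), in the form-valued sense, as in the flat hyperkähler picture. The second is multiplication by \(\sqrt{-1}\) on the \(\End E\) factor, which I call \(\mathcal J\). The third is \(\mathcal K=\mathcal I\mathcal J\). I choose them using \(h\), in the form \(\mathcal J(\dot\nabla)=\sqrt{-1}\,\dot\nabla\) composed with the \(h\)-adjoint twist \(a\mapsto -a^{*_h}\). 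With this choice they satisfy the quaternion relations pointwise and are \(g\)-orthogonal, so compatibility with the Riemannian metric \(\operatorname{Re}g\) is a pointwise linear-algebra check.

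The real work is to show that these operators preserve the tangent model, or rather its image under the \(L^2\)-orthogonal projection onto \(\mathcal H^1_\nabla\). At a Hermitian point, where \(\Psi_h=0\) and \(\nabla=D_h\) is HYM, the Kähler identities show that \(\mathcal I,\mathcal J,\mathcal K\) commute with the linearized operator \(d^+_\nabla\oplus\nabla^{*}\). So they preserve \(\mathcal H^1\) exactly, recovering the structure on \(T^*\mathcal M_{HYM}\) that underlies the Kaledin--Verbitsky symplectic form.

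Away from the Hermitian locus this commutation fails by terms of order \(|\Psi_h|\). I therefore define \(\mathcal I_\alpha,\mathcal J_\alpha,\mathcal K_\alpha\) on the slice by composing the pointwise operators with the orthogonal projection \(P_\nabla\) onto \(\mathcal H^1_\nabla\). Near the Hermitian locus the deviation \(P_\nabla\mathcal J P_\nabla-\mathcal J|_{\mathcal H^1}\) is small, by the continuity of harmonic metrics together with unobstructedness, which gives a uniform bound on the Green operator. Since \(P_\nabla\mathcal J P_\nabla\) is then close to an operator of square \(-\Id\), a polar-decomposition correction \(J\mapsto J(-J^2)^{-1/2}\) makes it a genuine almost complex structure. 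Then I Gram--Schmidt the triple so that \(\mathcal I_\alpha\mathcal J_\alpha=\mathcal K_\alpha\) and all three are \(g\)-orthogonal, keeping \(\mathcal I_\alpha\) equal to the complex structure of \(\mathcal M^s\).

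This correction step is the main obstacle. The aim is to do it canonically, for instance by using the polar decomposition of the \(g\)-skew part, so that no choices enter. Finally, gluing follows because a change of slice is induced by a complex gauge transformation \(g\) that carries the normalized harmonic metric to the pulled-back one. All the ingredients (\(h\), \(P_\nabla\), the adjoint, the polar decomposition) are equivariant under this action, so the local triples agree on overlaps. The word "neighborhood" in the statement reflects exactly the smallness needed for the polar correction to be defined.
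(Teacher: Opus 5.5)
There is a genuine gap, and it sits exactly at the step you call the main obstacle. Write $\beta=(a,b)\in\Omega^{0,1}(\End E)\oplus\Omega^{1,0}(\End E)$, so that $\beta^{*,h}=(b^{*,h},a^{*,h})$. Your $\mathcal I$ is $(a,b)\mapsto(ia,-ib)$, and your $\mathcal J$ is $\beta\mapsto\pm i\beta^{*,h}$.

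\textbf{The pointwise triple is not quaternionic.} Since $(c\gamma)^{*,h}=\bar c\,\gamma^{*,h}$, any map $\beta\mapsto c\,\beta^{*,h}$ squares to $|c|^2\Id$, so $\mathcal J^2=+\Id$. The adjoint both swaps the two types and conjugates scalars, so a direct check gives $\mathcal I\mathcal J=\mathcal J\mathcal I$. If $\mathcal J$ is read as plain multiplication by $i$, it still commutes with $\mathcal I$.

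\textbf{Your $\mathcal I$ is not the complex structure of $\mathcal M^s$.} That complex structure comes from the complex gauge group acting on the complex affine space $\mathcal A$, so it is scalar multiplication by $i$. Scalar $i$ preserves $\ker d_{2,\alpha}\cap\ker d_{1,\alpha}^{*,h_\alpha}$ at every point, because both operators are complex-linear. Your form-type operator instead sends the $\Lambda$-component $\Lambda(\nabla'a+\nabla''b)$ of $d_2\beta$ to $i\Lambda(\nabla'a-\nabla''b)$. On harmonic forms this vanishes at a Hermitian point by the K\"ahler identities. It does not vanish in general, because off the Hermitian locus those identities express $(\nabla'')^{*,h}$ through $(\nabla^{\dagger_h})'$ rather than $\nabla'$. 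So off the locus your $\mathcal I$ would also have to be projected and polar-corrected. Two independently corrected operators have no reason to anticommute, and ``Gram--Schmidt the triple'' is not a defined procedure. Also, ``keeping $\mathcal I_\alpha$ equal to the complex structure of $\mathcal M^s$'' contradicts your own definition of $\mathcal I$.

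\textbf{The fix, as in the paper.} You have the right ingredients but attached the adjoint twist to the wrong operator. The paper takes $I(a,b)=(ia,ib)$, $J(a,b)=(ib^{*,h_\alpha},-ia^{*,h_\alpha})$ (your form-type operator composed with the adjoint), and $K=IJ$.

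Then $I$ needs no correction. It preserves $\mathcal H^1_\alpha$ and commutes with the harmonic projection $\mathbb H_\alpha$, since that is an $h_\alpha$-orthogonal projection onto a complex subspace.

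Only $J$ is corrected. The operator $S_\alpha=\mathbb H_\alpha J$ is $g_\alpha$-skew, invertible near the Hermitian locus, and anticommutes with $I$. Hence $-S_\alpha^2$ and its positive square root $R_\alpha$ commute with $I$, and $\widetilde J=S_\alpha R_\alpha^{-1}$ anticommutes with $I$. The polar correction of $K$ is then automatically $I\widetilde J$, because $\mathbb H_\alpha K=I\,\mathbb H_\alpha J$ and $R_K=R_J$. This gives the quaternion relations with no Gram--Schmidt step.

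The rest of your outline matches the paper: smallness near the Hermitian locus, the correction $S(-S^2)^{-1/2}$, and gluing by gauge-equivariance of $h^{\mathrm n}$, $\mathbb H_\alpha$, the adjoint and the square root. Two smaller points need fixing. You must project onto $\mathcal H^1_\alpha$ at $\nabla+\alpha$ with respect to $h_\alpha$, not onto $\mathcal H^1_\nabla$. The gauge condition is $d_1^{*}\beta=0$ for the full complex gauge group, not orthogonality to a real gauge direction.

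Finally, the paper transports the triple to $T_\alpha S_{\nabla,\varepsilon}$ via $Q_\alpha=\mathbb H_\alpha|_{T_\alpha S_{\nabla,\varepsilon}}$. This map is an isometry by the definition of the Hermitian metric, which is what makes compatibility with the Riemannian metric immediate.
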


These results show that the conjectural hyperkähler structure, if it exists, cannot arise from the natural quaternionic almost complex structures constructed above. Consequently, the direct Kaledin--Verbitsky strategy is excluded.

The main difficulty in the NHYM case is that the relevant geometric structures cannot be obtained by directly descending fixed ambient structures. To obtain well-defined objects on the moduli space, one has to let the Hermitian metric and the almost complex structures vary with the base point and introduce correction terms ensuring compatibility with the transition maps between slices. These corrections are precisely what distinguishes the NHYM case from the classical  settings. They make the constructions well defined, but at the same time produce the closedness obstruction of the fundamental form and integrability obstructions of $\mathcal J_\alpha,\mathcal K_\alpha$.

This paper is organized as follows.

Section~2 reviews the description of the NHYM moduli space. In particular, we give the elliptic deformation complex, the slice expression, and the unobstructed locus \(\widehat{\mathcal M}^s\). 

Section~3 recalls the existence and uniqueness of harmonic metrics for stable
NHYM bundles. This result is one of the main
inputs in our construction of a Hermitian metric on
\(\widehat{\mathcal M}^s\). We also establish several elementary compatibility
properties of the Hermitian inner product under rescaling of Hermitian metrics and under gauge transformations.
These properties will be used later to prove that the locally defined metrics are compatible with the transition maps between slices.

Section~4 is devoted to the construction of the Hermitian metric on
\(\widehat{\mathcal M}^s\). We assign to each point of a local slice a normalized
harmonic metric and use it to define an \(L^2\)-inner product of tangent vectors. After proving the smooth dependence of this construction
and its gauge-equivariance, we show that the resulting local metrics are compatible
with the transition maps between slices. This yields a well-defined Hermitian metric
on the unobstructed locus.

Section~5 follows a similar strategy for almost complex structures. We start from the standard quaternionic structures on the ambient space of connections, as in the usual hyperkähler picture. However, this ambient triple does not in general preserve the tangent spaces of $\widehat{\mathcal M}^s$. It does so only along the Hermitian locus. We are therefore led to work in a neighborhood of the Hermitian locus. To extend the construction away from this locus, we introduce certain correction terms so that the resulting operators define an almost hypercomplex structure on the unobstructed locus under consideration.

In Section~6, we recall the hyperkähler reduction construction and discuss whether the standard hyperkähler reduction method can be applied to $\mathcal M^s$
. Indeed, $\mathcal M^s$ is only a subset of a formal hyperkähler space, and its tangent spaces are not, in general, preserved by the ambient complex structures. Therefore the hyperkähler structure of the ambient space cannot be inherited by $\mathcal M^s$
 through the usual reduction argument.

\section{Previous results on the moduli space of NHYM connections}
We begin by recalling some basic definitions and properties of the moduli space of NHYM connections \cite{NHYM}.

Suppose $E$ is a complex vector bundle over a compact Kähler manifold $M$. 
Let $\mathcal{A}$ denote the space of all connections on $E$. $\mathcal{A}_0$ is the subspace of NHYM connections in $\mathcal{A}$ whose curvature $\Theta$ is of Hodge type (1,1) and satisfies $\Lambda \Theta=\lambda \,\operatorname{Id}_E$. The Lie group $G=\textnormal{Maps}(M, \textnormal{Aut} E)$ acts on $\mathcal{A}$  by
$
u\cdot\nabla=u^{-1}\nabla u.
$
Equivalently, if \(\nabla'=\nabla+\eta\) with
\(\eta\in\Omega^1(\End E)\), then
$
u\cdot\nabla'
=
u^{-1}\nabla u+u^{-1}\eta u.
$
This action preserves the subset \(\mathcal A_0\subset\mathcal A\). The symbol \(u\) is used either for an element of the gauge group or for the induced gauge action, the intended meaning should be clear from the context. To simplify the exposition, we will always consider only NHYM connections with the constant $\lambda=0$.

Let $\mathcal{A}^s \subset \mathcal{A}_0$ be the open subset of (0,1)-stable NHYM connections.

\begin{definition}[\textnormal{\textbf{\cite{NHYM}}}]
Let $\mathcal{M}^s=\mathcal{A}^s/G$ be the moduli space of NHYM connections on $E$ endowed with the quotient topology.
\end{definition}
\begin{proposition}[\textnormal{\textbf{\cite[Proposition 2.7]{NHYM}}}]
The deformation complex $\mathcal C_\nabla$ associated with the NHYM connection $\nabla$, 
\begin{equation}0 \to \Omega^0(\End E) \xrightarrow{\,d_1\,} \Omega^1(\End E)\xrightarrow{\,d_2\,} \Omega^{2,0}(\End E) \oplus \Omega^{0,2}(\End E) \oplus \Omega^0(\End E)\end{equation}
is elliptic, where 
$d_{1}\xi=\nabla\xi,  \ d_{2}\beta
=
\bigl(
(\nabla\beta)^{2,0},
(\nabla\beta)^{0,2},
\Lambda\nabla\beta
\bigr)$. We write $\nabla=\nabla'+\nabla^{''}$ for the decomposition of $\nabla$ into its \((1,0)\)- and \((0,1)\)-components.
\end{proposition}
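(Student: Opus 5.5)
To prove the proposition I would check two things: that $\mathcal C_\nabla$ is a complex, i.e.\ $d_2\circ d_1=0$ on $\mathcal A_0$, and that its symbol sequence is exact off the zero section.

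\textbf{The complex property.} For $\xi\in\Omega^0(\End E)$ we have $\nabla(\nabla\xi)=[\Theta,\xi]$. Since $\Theta\in\Omega^{1,1}(\End E)$, the commutator $[\Theta,\xi]$ is also of type $(1,1)$. Hence its $(2,0)$- and $(0,2)$-components vanish. For the last component, $\Lambda[\Theta,\xi]=[\Lambda\Theta,\xi]=[\lambda\Id_E,\xi]=0$. Both NHYM equations are used exactly here, and this gives $d_2d_1=0$.

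\textbf{The symbol computation.} Fix $x\in M$ and $0\neq v\in T^*_xM$, and split $v=v^{1,0}+v^{0,1}$. The symbols are
\[
\sigma_1(v)\xi=v\otimes\xi,\qquad \sigma_2(v)\beta=\bigl((v\wedge\beta)^{2,0},(v\wedge\beta)^{0,2},\Lambda(v\wedge\beta)\bigr).
\]
These do not depend on $\nabla$ or on $\End E$, so it suffices to work with scalar forms tensored with $\End E_x$.

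Injectivity of $\sigma_1(v)$ is clear. For exactness in the middle, write $\beta=\beta^{1,0}+\beta^{0,1}$. Then
\[
(v\wedge\beta)^{2,0}=v^{1,0}\wedge\beta^{1,0},\qquad (v\wedge\beta)^{0,2}=v^{0,1}\wedge\beta^{0,1}.
\]
Since $v\neq0$ and $v$ is real, both $v^{1,0}$ and $v^{0,1}=\overline{v^{1,0}}$ are nonzero. The vanishing of the first two components therefore forces $\beta^{1,0}=a\,v^{1,0}$ and $\beta^{0,1}=b\,v^{0,1}$ for some $a,b\in\End E_x$. Substituting into the third component gives
\[
\Lambda(v\wedge\beta)=(b-a)\,\Lambda(v^{1,0}\wedge v^{0,1}),
\]
and $\Lambda(v^{1,0}\wedge v^{0,1})$ is a nonzero multiple of $|v|^2$. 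So the third component vanishes iff $a=b$, i.e.\ iff $\beta=a\,v\in\operatorname{im}\sigma_1(v)$. This proves exactness at $\Omega^1$.

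\textbf{The remaining point.} I expect the only real obstacle to be the meaning of ellipticity at the last term, since $d_2$ is not expected to be symbol-surjective there. For a three-term complex $0\to A\to B\to C$, ellipticity is naturally understood as exactness at $A$ and $B$ together with elliptic behaviour of $d_1^*d_1+d_2^*d_2$ on $B$. I would verify the latter by a rank count. The real rank of $\Omega^1$ is $2n$, and that of $\Omega^{2,0}\oplus\Omega^{0,2}$ counted as a real bundle is $n(n-1)$, plus the $\Lambda$ component. Using the middle exactness just shown, the operator $\sigma_1\sigma_1^*+\sigma_2^*\sigma_2$ is injective on the middle term and hence invertible there. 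This shows that the Laplacian at $\Omega^1(\End E)$ is elliptic, which is all that is needed later for Hodge theory and the slice construction.
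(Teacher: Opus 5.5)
The paper gives no proof of this proposition; it only cites Kaledin--Verbitsky. Your direct symbol computation is therefore a self-contained substitute rather than a reproduction, and it is correct. The check $d_2d_1=0$ uses exactly the two NHYM equations. Forcing $\beta^{1,0}=a\,v^{1,0}$ and $\beta^{0,1}=b\,v^{0,1}$, and then $a=b$ via $\Lambda(v^{1,0}\wedge v^{0,1})\neq 0$, gives exactness at $\Omega^0$ and $\Omega^1$. Equivalently, $d_1^*d_1$ and $d_1d_1^*+d_2^*d_2$ are elliptic (you wrote $d_1^*d_1+d_2^*d_2$ on $B$, which is a slip).

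What your route adds over the citation is a precise account of where ellipticity holds, and you should state that part definitely instead of hedging. Your computation gives $\dim_{\mathbb C}\operatorname{im}\sigma_2(v)=(2n-1)r^2$, with $r=\operatorname{rank}E$, against a target of dimension $(n(n-1)+1)r^2$. Your own rank count mixes real and complex ranks and is never used. So $\sigma_2(v)$ is onto exactly when $n\le 2$: for $n=2$ the three-term complex is elliptic in the strict sense, and for $n\ge 3$ it is not.

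For $n\ge 3$, the correct statement is that the complex is elliptic once continued by $d_3(\phi,\psi,f)=(\nabla'\phi,\nabla''\psi)\in\Omega^{3,0}(\End E)\oplus\Omega^{0,3}(\End E)$ and then by $\nabla'$, $\nabla''$. This is a complex because $\Theta^{2,0}=\Theta^{0,2}=0$. From the fourth slot onward the symbol sequences are Koszul complexes in $v^{1,0}$ and $v^{0,1}$. Exactness at the third slot follows by your method: if $v^{1,0}\wedge\phi=0$ and $v^{0,1}\wedge\psi=0$, write $\phi=v^{1,0}\wedge\gamma'$ and $\psi=v^{0,1}\wedge\gamma''$. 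Then take $\beta=\gamma'+\gamma''+c\,v^{1,0}$, with $c\in\End E_x$ chosen to match $f$.

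This matters for the paper. Ellipticity at $\Omega^1$ does suffice for the harmonic projections and the gauge slice. It does not suffice for the unobstructedness condition $H^2(\widetilde{\mathcal C}_\nabla)=0$. For $n\ge 3$, $\operatorname{im}d_3^*$ is infinite-dimensional and $L^2$-orthogonal to $\operatorname{im}d_2$, so the cokernel of the truncated $d_2$ is infinite-dimensional. Hence $H^2$ has to be taken in the continued complex.
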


For later computations, we fix the following notational convention. Let
\(\nabla\) be a fixed NHYM connection. Unless otherwise stated, the operators
\(d_1\) and \(d_2\) always denote the operators in the deformation complex at
\(\nabla\). For \(\alpha\in\Omega^1(\End E)\), we write
\(d_{1,\alpha}\) and \(d_{2,\alpha}\) for the corresponding operators at
\(\nabla+\alpha\). Throughout the paper, the subscript \(\alpha\) will indicate that the
corresponding object is taken with respect to the base connection
\(\nabla+\alpha\).

\begin{lemma}For \(\xi\in\Omega^0(\End E), \beta\in\Omega^1(\End E)\),
\begin{equation}
d_{1,\alpha}\xi
=
\nabla\xi+[\alpha,\xi],
\end{equation}
\begin{equation}
d_{2,\alpha}\beta
=
\left(
\bigl(\nabla\beta+[\alpha,\beta]\bigr)^{2,0},
\bigl(\nabla\beta+[\alpha,\beta]\bigr)^{0,2},
\Lambda\bigl(\nabla\beta+[\alpha,\beta]\bigr)
\right).
\end{equation}
Here \([\cdot,\cdot]\) denotes the graded commutator of
\(\End E\)-valued forms.
\end{lemma}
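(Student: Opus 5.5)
The plan is to unwind the definitions of the deformation complex from the preceding proposition, applied at the connection \(\nabla+\alpha\) in place of \(\nabla\). By that proposition, \(d_{1,\alpha}\xi\) is the covariant derivative of \(\xi\) induced by \(\nabla+\alpha\) on \(\End E\). Likewise, \(d_{2,\alpha}\beta\) is obtained from the exterior covariant derivative \((\nabla+\alpha)\beta\) on \(\End E\)-valued one-forms, by taking its \((2,0)\)-part, its \((0,2)\)-part and its contraction \(\Lambda\). So the whole statement reduces to one identity. Let \(\nabla^{\End}\) denote the connection induced on \(\End E\) by \(\nabla\), extended to \(\End E\)-valued forms. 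Then the connection induced by \(\nabla+\alpha\) satisfies
\begin{equation*}
(\nabla+\alpha)^{\End}\,\omega=\nabla^{\End}\omega+[\alpha,\omega]
\end{equation*}
for every \(\omega\in\Omega^k(\End E)\), where \([\cdot,\cdot]\) is the graded commutator.

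First I check this identity for \(k=0\). The induced connection on \(\End E\) is characterised by the Leibniz rule \((\nabla+\alpha)(\xi s)=((\nabla+\alpha)^{\End}\xi)s+\xi(\nabla+\alpha)s\) for sections \(s\) of \(E\). Expanding both sides and using \((\nabla+\alpha)s=\nabla s+\alpha s\) gives
\begin{equation*}
(\nabla+\alpha)^{\End}\xi=\nabla^{\End}\xi+\alpha\xi-\xi\alpha=\nabla^{\End}\xi+[\alpha,\xi].
\end{equation*}
This proves the formula for \(d_{1,\alpha}\).

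For one-forms, it suffices to treat decomposable \(\beta=\theta\otimes\xi\), with \(\theta\) a scalar one-form, since both sides are additive. The exterior covariant derivative is \((\nabla+\alpha)^{\End}\beta=d\theta\,\xi-\theta\wedge(\nabla+\alpha)^{\End}\xi\). Substituting the \(k=0\) case, the extra term is \(-\theta\wedge[\alpha,\xi]\). This term should equal the graded commutator \([\alpha,\beta]=\alpha\wedge\beta+\beta\wedge\alpha\), which is the correct sign for the bracket of two odd-degree forms. To see this, write \(\alpha=\sum\theta_i\otimes a_i\) locally; then
\begin{equation*}
\alpha\wedge\beta+\beta\wedge\alpha=\sum_i\theta_i\wedge\theta\,(a_i\xi-\xi a_i)=-\theta\wedge[\alpha,\xi].
\end{equation*}

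Finally, taking \((2,0)\)- and \((0,2)\)-components and applying \(\Lambda\) are all linear operations. Applying them to the identity \((\nabla+\alpha)^{\End}\beta=\nabla\beta+[\alpha,\beta]\) gives the stated formula for \(d_{2,\alpha}\).

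The only real obstacle is bookkeeping of signs in the graded commutator, that is, making sure the convention \([\alpha,\beta]=\alpha\wedge\beta+\beta\wedge\alpha\) for odd-degree forms matches the Leibniz sign in the exterior derivative. I would settle this with the decomposable-form computation above, after fixing the convention \((\theta\otimes a)\wedge(\theta'\otimes b)=\theta\wedge\theta'\otimes ab\).
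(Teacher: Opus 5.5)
Your proposal is correct. The paper states this lemma without proof, treating it as the immediate unwinding of the deformation complex at \(\nabla+\alpha\); your computation supplies exactly that unwinding, including the correct sign check \(-\theta\wedge[\alpha,\xi]=\alpha\wedge\beta+\beta\wedge\alpha\) for the graded commutator of odd-degree forms.
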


Suppose $h$ is a Hermitian metric on $E$. The spaces \(\Omega^l(\End E)\) are endowed with the Sobolev
\(W^{k,2}\)-norms, denoted by \(|\cdot |_k\), where \(k\) is chosen sufficiently large relative to \(\dim M\).

The following proposition is obtained by the same Kuranishi-type argument as in \cite{koba1}.
\begin{proposition}[\textnormal{\textbf{\cite[(7.3.9)]{koba1}}}]
The slice of $\nabla$ in $\Omega^1(\End E)$ is \begin{equation}S^h_{\nabla,\varepsilon}=\{\alpha \in \Omega^1(\End E) : |\alpha|_k <\varepsilon, \nabla^{*,h} \alpha=0, (N_\alpha)^{2,0}=(N_\alpha)^{0,2}=\Lambda (N_\alpha)=0\},\end{equation} where $N_\alpha=\nabla \alpha+\alpha \wedge \alpha$.
\end{proposition}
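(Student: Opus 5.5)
The slice statement says two things. First, every NHYM connection $\nabla+\alpha$ near $\nabla$ is gauge equivalent to $\nabla+\alpha'$ with $\alpha'\in S^h_{\nabla,\varepsilon}$. Second, $\nabla+\alpha$ lies in $\mathcal A_0$ exactly when $\alpha$ satisfies the curvature conditions in the definition of $S^h_{\nabla,\varepsilon}$. I would follow the Kuranishi argument of \cite[Ch.~7]{koba1}, adapting it to the full complex gauge group $G$ and to the deformation complex $\mathcal C_\nabla$.

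\textbf{Step 1: the equations.} The curvature of $\nabla+\alpha$ is $\Theta+\nabla\alpha+\alpha\wedge\alpha=\Theta+N_\alpha$. Since $\Theta\in\Omega^{1,1}$ and $\Lambda\Theta=0$, the NHYM conditions for $\nabla+\alpha$ are equivalent to
\[
(N_\alpha)^{2,0}=(N_\alpha)^{0,2}=\Lambda N_\alpha=0 .
\]
The linearization of $\alpha\mapsto\bigl((N_\alpha)^{2,0},(N_\alpha)^{0,2},\Lambda N_\alpha\bigr)$ at $\alpha=0$ is exactly $d_2$. This is consistent with the complex $\mathcal C_\nabla$, because $d_2d_1\xi$ is built from $[\Theta,\xi]$, which vanishes by the NHYM condition on $\nabla$.

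\textbf{Step 2: the gauge condition.} I will show that every $\alpha$ with small $W^{k,2}$-norm can be moved by a gauge transformation near $\Id$ to satisfy $\nabla^{*,h}\alpha=0$. For $u=\exp(\xi)$ with $\xi\in\Omega^0(\End E)$ in $W^{k+1,2}$, the map
\[
F(\xi,\alpha)=\nabla^{*,h}\bigl(u\cdot(\nabla+\alpha)-\nabla\bigr)
\]
has $\xi$-derivative $\nabla^{*,h}\nabla=d_1^{*}d_1$ at $(0,0)$. This operator is elliptic and self-adjoint, with kernel $H^0=\ker d_1$. Because $\nabla$ is $(0,1)$-stable, its holomorphic structure is simple, so $H^0$ consists only of the scalars $\mathbb C\,\Id$. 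These scalars act trivially on connections. So I restrict $\xi$ to the $L^2$-orthogonal complement of $\mathbb C\,\Id$, and restrict the target to the complement of the constants. Inverting $d_1^*d_1$ there, the implicit function theorem on Banach spaces gives a smooth map $\alpha\mapsto\xi(\alpha)$ with $F(\xi(\alpha),\alpha)=0$.

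\textbf{Step 3: combining.} Gauge transformations preserve $\mathcal A_0$. Hence if $\nabla+\alpha$ is NHYM, its gauge-fixed representative $\nabla+\alpha'$ is still NHYM, and $|\alpha'|_k<\varepsilon$ once $|\alpha|_k$ is small enough. This puts $\alpha'$ in $S^h_{\nabla,\varepsilon}$. For local injectivity, suppose two slice elements are related by some $u$. I would use ellipticity of $d_1^*d_1$ together with the $(0,1)$-stability (simplicity) to show that $u$ is close to a constant multiple of $\Id$. That then gives local injectivity modulo the stabilizer $\mathbb C^*$.

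\textbf{Main obstacle.} The hardest step is the uniqueness and properness part of Step 3. Because $G$ is the noncompact complex group rather than a unitary group, the standard compactness argument for showing that gauge equivalences between nearby slice points are close to the identity fails. I would first try to derive the needed estimate from stability of the holomorphic structure $\nabla''$, which gives openness and simplicity. Combined with the Banach implicit function theorem, this should rule out distant gauge equivalences by a local Kuranishi-type estimate.
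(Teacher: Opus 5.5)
Your Steps 1 and 2 are correct and follow the Kuranishi-type argument the paper invokes from \cite{koba1}. The statement is essentially the definition of the slice: the Coulomb condition plus the NHYM equations for $\nabla+\alpha$, which you correctly reduce to the conditions on $N_\alpha$. Your gauge-fixing map is the same one the paper later uses for transition maps, with $\xi$ normalized by $\int_M\operatorname{tr}\xi\,\frac{\omega^n}{n!}=0$, i.e.\ $\xi\perp\mathbb C\,\Id$. One small imprecision: $[\Theta,\xi]$ itself need not vanish. What vanishes is $d_2d_1\xi$, namely the $(2,0)$ and $(0,2)$ parts of $[\Theta,\xi]$ and $\Lambda[\Theta,\xi]=[\Lambda\Theta,\xi]$. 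Local injectivity is not part of this statement; the paper places it in the next proposition, citing \cite[Theorem 7.3.17]{koba1}.

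Since you included injectivity, note that the obstacle you describe in Step 3 is not real: neither a compactness argument nor stability estimates are needed. The missing idea is that the relation $u\cdot(\nabla+\alpha_1)=\nabla+\alpha_2$ is \emph{linear} in $u$: it says $\nabla u=u\alpha_2-\alpha_1u$. Scalars act trivially, so you may rescale $u$ freely.

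Write $u=c\,\Id+u_\perp$ with $u_\perp$ $L^2$-orthogonal to $\ker d_1=\mathbb C\,\Id$. The elliptic estimate for $\nabla$ on $(\ker d_1)^\perp$ and the algebra property of $W^{k,2}$ give
\[
|u_\perp|_{k+1}\le C\,|\nabla u|_k\le C\bigl(|\alpha_1|_k+|\alpha_2|_k\bigr)\,|u|_{k+1}.
\]
Hence $c\neq0$ (otherwise $u=0$), and $c^{-1}u=\Id+\eta$ with $|\eta|_{k+1}=O(\varepsilon)$.

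Take $\xi_0=\log(\Id+\eta)$ and subtract its mean trace. This gives $u=\lambda\exp(\xi)$ with $\lambda\in\mathbb C^*$ and $\xi$ small and orthogonal to $\mathbb C\,\Id$. Because scalars act trivially, $F(\xi,\alpha_1)=\nabla^{*,h}\alpha_2=0=F(0,\alpha_1)$. The uniqueness clause of the implicit function theorem from your Step 2 then forces $\xi=0$. Hence $u\in\mathbb C^*\Id$ and $\alpha_1=\alpha_2$, using only simplicity ($\ker\nabla=\mathbb C\,\Id$ on $\Omega^0(\End E)$).
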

Note that every $(0,1)$-stable $\nabla$ is simple, and obviously, if we choose another Hermitian metric $h'$, the following proposition still holds.
\begin{proposition}[\textnormal{\textbf{\cite[Theorem 7.3.17]{koba1}}}, \textnormal{\textbf{\cite[Proposition 2.3]{Itoh2}}}]
The natural map $p: S^h_{\nabla,\varepsilon} \to \mathcal{M}^s, \ \alpha \to [\nabla +\alpha]$ gives a homeomorphism of a neighborhood of $0$ in $S^h_{\nabla,\varepsilon}$ onto a neighborhood of $[\nabla]$ in $\mathcal{M}^s$.
\end{proposition}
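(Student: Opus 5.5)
The plan is to follow the standard Kuranishi slice argument (Kobayashi, Theorem 7.3.17), adapted to the non-unitary gauge group $G$ acting by $u\cdot\nabla=u^{-1}\nabla u$. The argument has three parts: a local surjectivity statement (every nearby connection can be gauged into the slice), a local injectivity statement (two slice elements in the same orbit are equal), and continuity of the inverse.

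\textbf{Step 1: local surjectivity.} Fix $h$ and consider, on Sobolev completions, the map
\[
F:\ \Omega^0(\End E)\times \Omega^1(\End E)\to \Omega^0(\End E),\qquad F(\xi,\beta)=\nabla^{*,h}\bigl(e^{-\xi}(\nabla+\beta)e^{\xi}-\nabla\bigr).
\]
Its derivative in $\xi$ at $(0,0)$ is $\nabla^{*,h}\nabla=d_1^{*}d_1$. This Laplacian is elliptic. Its kernel is the space of $\nabla$-parallel endomorphisms, which equals $\mathbb C\,\Id$ because $(0,1)$-stable implies simple. Restricting $\xi$ to the $L^2$-orthogonal complement of $\mathbb C\,\Id$, and the target to the complement of constants (the image of $\nabla^{*,h}$ is automatically orthogonal to $\Id$), makes the derivative an isomorphism. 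The implicit function theorem then shows that every $\nabla+\beta$ with $|\beta|_k$ small is gauge equivalent to some $\nabla+\alpha$ with $\nabla^{*,h}\alpha=0$. If $\nabla+\beta$ is NHYM, then so is $\nabla+\alpha$, so $N_\alpha$ satisfies the type and trace conditions and $\alpha$ lies in $S^h_{\nabla,\varepsilon}$.

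\textbf{Step 2: local injectivity.} This is the main obstacle, because $G$ is not compact, so one cannot simply invoke a properness argument for a compact group. Suppose $u\cdot(\nabla+\alpha_1)=\nabla+\alpha_2$ with $\alpha_i$ small. This means $\nabla u+\alpha_1 u-u\alpha_2=0$, so $u$ is a morphism between two nearby $(0,1)$-stable holomorphic structures with the same slope. I would argue as follows. Since $\nabla^{0,1}u+\alpha_1^{0,1}u-u\alpha_2^{0,1}=0$ and both structures are stable with the same slope, either $u$ is an isomorphism or $u=0$; as $u\in G$, it is an isomorphism. Stability also gives that $u$ is unique up to scalars. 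Now rescale $u$ and use elliptic estimates for the operator $\nabla^{*}\nabla$ to see that $u-c\,\Id$ is small in $W^{k+1,2}$ for a suitable constant $c$. After normalizing $c=1$, write $u=e^{\xi}$ with $\xi\perp\Id$ small. The uniqueness part of the implicit function theorem from Step 1 then forces $\xi=0$, hence $\alpha_1=\alpha_2$.

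The delicate point is the compactness input for the elliptic estimate. To control $u$, I would apply the estimate to the equation $\nabla u=u\alpha_2-\alpha_1 u$, normalized by $\|u\|_{L^2}=1$. A contradiction argument with sequences $\alpha_i\to 0$ gives a limit that is a nonzero $\nabla$-parallel endomorphism, hence a nonzero multiple of $\Id$.

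\textbf{Step 3: homeomorphism.} The map $p$ is continuous by the definition of the quotient topology. Steps 1 and 2 give that $p$ is a bijection from a neighborhood of $0$ onto its image, and Step 1 also gives that this image is open. The inverse sends $[\nabla+\beta]$ to the $\alpha$ produced by the implicit function theorem, which depends continuously on $\beta$, so it descends continuously to the quotient. Finally, replacing $h$ by another metric $h'$ only changes the adjoint $\nabla^{*,h'}$, and the argument above never used unitarity, so the same proof applies verbatim.
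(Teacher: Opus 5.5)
Your proposal is correct and is essentially the argument the paper relies on. The paper gives no proof beyond citing Kobayashi's Theorem 7.3.17 and Itoh's Proposition 2.3, remarking only that $(0,1)$-stable connections are simple and that any Hermitian metric may be used; your three steps (Coulomb gauge via the implicit function theorem with $\ker d_1=\mathbb C\,\Id$, an elliptic estimate showing that an intertwining $u$ is close to a nonzero scalar, then uniqueness in the implicit function theorem) are exactly that Kuranishi slice argument, and the stability remarks in Step 2 are superfluous since simplicity of $\nabla$ plus the elliptic estimate already does the work (just remember to subtract a constant from $\log u$ so that it is orthogonal to $\Id$ before invoking uniqueness).
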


After replacing
\(\varepsilon\) by a smaller positive number, we may assume that the above
homeomorphism is defined on the whole slice \(S^h_{\nabla,\varepsilon}\).

\begin{proposition}[\textnormal{\textbf{\cite[Corollary 2.9]{NHYM}}}]
$\mathcal{M}^s$ is a complex-analytic space. 
\end{proposition}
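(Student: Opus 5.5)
The statement to prove is that $\mathcal M^s$ is a complex-analytic space. The plan is to build local analytic models from the slices $S^h_{\nabla,\varepsilon}$ and check that they glue holomorphically.

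\textbf{Step 1: each slice is an analytic subset.} Fix $[\nabla]\in\mathcal M^s$ and a Hermitian metric $h$. By the slice proposition and the homeomorphism $p:S^h_{\nabla,\varepsilon}\to\mathcal M^s$, it suffices to show that $S^h_{\nabla,\varepsilon}$ is a complex-analytic subset of a finite-dimensional complex manifold, and that the change-of-slice maps are holomorphic.

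\begin{itemize}
\item The space $\Omega^1(\End E)$ is a complex vector space, since $\End E$ is a complex bundle.
\item The map $\alpha\mapsto\bigl((N_\alpha)^{2,0},(N_\alpha)^{0,2},\Lambda N_\alpha\bigr)$ is a complex quadratic polynomial. It is $d_2\alpha$ plus the corresponding components of $\alpha\wedge\alpha$, and it contains no complex conjugation.
\item The gauge-fixing condition $\nabla^{*,h}\alpha=0$ is only real-linear, because $\nabla^{*,h}$ involves $h$-adjoints.
\end{itemize}

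\textbf{Step 2: a complex-linear gauge condition.} To avoid the real-linearity problem, I will replace the gauge condition by a complex-linear one. Let $d_1^*$ denote the formal adjoint of $d_1$ taken as a complex-linear operator. Equivalently, use the Laplacian $\Delta_1=d_1d_1^*+d_2^*d_2$ of the elliptic complex $\mathcal C_\nabla$. Its harmonic space $H^1$, and the orthogonal complements with respect to the Hermitian $L^2$ product, are complex subspaces because $d_1$ and $d_2$ are $\mathbb C$-linear. The condition $\nabla^{*,h}\alpha=0$ with $\nabla^{*,h}$ the $h$-adjoint of $d_1$ is then exactly $d_1^*\alpha=0$, and $d_1^*$ is $\mathbb C$-antilinear in neither variable; it is $\mathbb C$-linear. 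I should double-check this point, since the "real-linearity" worry may be unfounded.

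\textbf{Step 3: Kuranishi reduction.} Run the Kuranishi argument as in \cite{koba1}. Let $G$ be the Green operator and $H$ the harmonic projection. Define
\[
k(\alpha)=\alpha+d_2^*G\,\pi(\alpha\wedge\alpha),
\]
where $\pi$ denotes the projection to the target components. By the inverse function theorem in $W^{k,2}$, $k$ is a biholomorphism near $0$, since it is holomorphic with identity derivative. The slice is then identified with
\[
\{x\in H^1 \text{ small} : H\,\pi\bigl(k^{-1}(x)\wedge k^{-1}(x)\bigr)=0\}.
\]
This is the zero set of a holomorphic map $H^1\to H^2$ between finite-dimensional complex spaces, hence a complex-analytic germ.

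\textbf{Step 4: holomorphic transition maps (main obstacle).} For overlapping slices at $\nabla_1$ and $\nabla_2$, the transition is determined by solving $d_{1}^*\bigl(u^{-1}(\nabla_1+\alpha)u-\nabla_2\bigr)=0$ for $u\in G^{\mathbb C}$ near a fixed gauge. The complex gauge group is a complex Banach Lie group, and this equation is holomorphic in $(\alpha,u)$. Its linearization in $u$ is $d_1^*d_1$, which is invertible on the complement of the constants, because stable connections are simple. The holomorphic implicit function theorem then gives $u=u(\alpha)$ depending holomorphically on $\alpha$, so the transition $\alpha\mapsto u(\alpha)\cdot(\nabla_1+\alpha)-\nabla_2$ is holomorphic.

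Checking that this transition, restricted to the analytic germs of Step 3, defines morphisms of analytic spaces is where care is needed. I would handle it by using ambient holomorphic extensions on neighborhoods in $H^1$, which is legitimate because the implicit function theorem is applied on the whole ambient Banach space rather than only on the zero set.
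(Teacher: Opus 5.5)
Your outline is correct and is the standard Kuranishi-type argument behind \cite[Corollary 2.9]{NHYM}, which the paper quotes without proof: holomorphic NHYM equations, a $\mathbb C$-linear Coulomb condition, a holomorphic action of the complex gauge group and ellipticity of $\mathcal C_\nabla$ give local models as zero sets of holomorphic maps $H^1\to H^2$, glued by holomorphic transition maps. Two points to tighten. First, your Step 1 worry is unfounded: $\nabla^{*,h}$ is the $L^2$-adjoint of a $\mathbb C$-linear operator for a sesquilinear pairing and is therefore $\mathbb C$-linear, so Step 2 replaces nothing. Second, in Step 3 the Green operator and a finite-dimensional $H^2$ in degree $2$, together with the converse half of the Kuranishi identification, require continuing $\mathcal C_\nabla$ by $\nabla'$, $\nabla''$ on the $(2,0)$ and $(0,2)$ components (for $\dim_{\mathbb C}M\ge 3$ the three-term complex is not elliptic at its last term) and using the Bianchi identity for $\nabla+\alpha$, exactly as in \cite{koba1}.
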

Let \(\End_0E\subset \End E\) denote the trace-free endomorphism bundle.
$H^i(\widetilde {\mathcal C}_\nabla)$ is the $i$-th cohomology group of the complex  for the trace-free deformation complex
\begin{equation}
0
\longrightarrow
A^0(\End_0E)
\xrightarrow{d_{1}}
A^1(\End_0E)
\xrightarrow{d_{2}}
A^{2,0}(\End_0E)\oplus A^{0,2}(\End_0E)\oplus A^0(\End_0E)
 .
\end{equation}

We denote the unobstructed locus by
$
\widehat{\mathcal M}^s
=
\left\{
[\nabla]\in\mathcal M^s
\;\middle|\;
H^2(\widetilde {\mathcal C}_\nabla)=0
\right\}
.$
\(\mathcal M^s\) is nonsingular at every point of \(\widehat{\mathcal M}^s\), and
$
T_{[\nabla]}\mathcal M^s \cong H^1(\mathcal C_\nabla).
$
This is the standard smoothness criterion for such elliptic deformation
problems; compare with Kobayashi's corresponding statements 
\cite[Corollary 7.3.32,\ Corollary 7.4.18]{koba1}. The condition \(H^2(\widetilde C_\nabla)=0\) is used here only as a sufficient
smoothness criterion. It is not meant to characterize all smooth points of
\(\mathcal M^s\); smooth points may exist outside \(\widehat{\mathcal M}^s\).
This distinction does not affect the results of the present paper.

In the following sections, we shall mainly work with \(\widehat{\mathcal M}^s\subset \mathcal M^s\).

\section{Harmonic metrics on NHYM bundles}

In this section, we recall the existence of harmonic metrics for semisimple NHYM bundles and prove several properties needed later.

Let $E$ be a complex vector bundle over a compact Kähler manifold $(M,\omega)$ and let $\nabla$ be a connection on $E$. Given a Hermitian metric $h$ on $E$, there is a unique decomposition
$\nabla = \nabla_h + \psi^{\nabla}_h$,
where $\nabla_h$ is an $h$-unitary connection and $\psi_h^{\nabla} \in \Omega^1(\End(E))$ is self-adjoint with respect to $h$.

For later reference, we first introduce the notation and conventions used throughout the sequel. Readers already familiar with them may skip it.

\begin{definition}
Let \(h\) be a Hermitian metric on \(E\). We use the following conventions. Throughout, Hermitian pairings are taken to be linear in the first argument.
\begin{enumerate}[label=\textup{(\roman*)}]
\item For $s,t \in \Gamma(E)$, the fiberwise Hermitian pairing on
\(E\) is denoted by \(h(s,t)\).

\item For $A \in \Gamma(\End E)$, its fiberwise \(h\)-adjoint is denoted
by \(A^{*,h}\) and is defined by $h(As,t)=h(s,A^{*,h}t).$ The same notation is used for \(\End E\)-valued forms:
$
(\eta\otimes A)^{*,h}
=
\bar\eta\otimes A^{*,h}.
$

\item The fiberwise Hermitian pairing on \(\End E\) is denoted by $
\langle A,B\rangle_{\End,h}=\operatorname{tr}(B^{*,h}A)$, $A,B \in \Gamma(\End E)$.

\item For \(F\)-valued forms
\(\beta_1,\beta_2\) of the same degree where $F=E$ or $F=\End E$, we write 
$\langle \beta_1,\beta_2\rangle_{F,h}$
for the pointwise pairings induced by the fixed Kähler metric on \((M,\omega)\) and by
the corresponding fiber metrics. The associated \(L^2\)-pairing is $
(\beta_1,\beta_2)_{F,h}
=
\int_M \langle \beta_1,\beta_2\rangle_{F,h}\,\frac{\omega^n}{n!}.
$
\item For a connection \(\nabla\) on \(E\), define a connection
\(\nabla^{\dagger_h}\) by $d\bigl(h(s,t)\bigr)
=
h(\nabla s,t)+h(s,\nabla^{\dagger_h}t), \ s,t\in\Gamma(E).
$

\item For \(F=E\) or \(F=\End E\), we denote by \(\nabla^{*,h}\) the
\(L^2\)-formal adjoint of the induced covariant derivative
\(\nabla:\Omega^k(F)\to\Omega^{k+1}(F)\), characterized by $
(\nabla\beta_1,\beta_2)_{F,h}
=
(\beta_1,\nabla^{*,h}\beta_2)_{F,h},\ \beta_1\in\Omega^k(F),\ 
\beta_2\in\Omega^{k+1}(F)$.
\item The formal adjoint of
$
d_{1,\nabla}:\Omega^0(\End E)\longrightarrow \Omega^1(\End E)
$
with respect to \((\ ,\ )_{\End,h}\) is denoted by
$
d_{1,\nabla}^{*,h}:\Omega^1(\End E)\longrightarrow \Omega^0(\End E),
$
and is characterized by 
$
\bigl(d_{1,\nabla}\beta_1,\beta_2\bigr)_{\End,h}$ $
=
\bigl(\beta_1,d_{1,\nabla}^{*,h}\beta_2\bigr)_{\End,h}$, 
$
\beta_1\in\Omega^0(\End E),\ 
\beta_2\in\Omega^1(\End E)$.
\item For \(u\in G\), the pullback Hermitian metric \(u^*h\) is defined by $
(u^*h)(s,t)=h(u   s,u t)$, $ s,t\in\Gamma(E)$.
\end{enumerate}
\end{definition}

When applied to endomorphisms or endomorphism-valued forms, the symbol
\(^{*,h}\) denotes the fiberwise Hermitian adjoint. When applied to a differential
operator, it denotes the formal \(L^2\)-adjoint with respect to the indicated metric. When no confusion is possible, the subscripts \(E,\ \End\) and \(h\) are suppressed.

With these conventions, we have $\nabla_h=\frac{1}{2}(\nabla+\nabla^{\dagger_h})$, $\psi^{\nabla}_h=\frac{1}{2}(\nabla-\nabla^{\dagger_h})$.

\begin{definition}[\textnormal{\textbf{\cite[(2.13)]{zhang}}}]
Let \((E,\nabla)\) be an NHYM bundle over a compact Kähler manifold
\((M,\omega)\), and let \(h\) be a Hermitian metric on \(E\). We say that \(h\) is a harmonic metric if it
satisfies either, and hence both, of the following equivalent definitions:
\begin{enumerate}[label=\textup{(\roman*)}]
\item  \(h\) satisfies
    $
    \nabla_h^{*,h}\psi^{\nabla}_h=0.
    $

    \item Let \(\Xi_h\) be the
    pseudocurvature of \((E,\nabla,h)\), then
    $
    \Lambda \Xi_h=0.
    $
\end{enumerate}
\end{definition}

\begin{theorem}[\textnormal{\textbf{\cite[Theorem 1.2]{zhang}}}]\label{ha}
Let $(X,\omega)$ be a compact Kähler manifold, and let $(E,\nabla)$ be a NHYM bundle over $X$. Then $(E,\nabla)$ admits a harmonic metric if and only if it is semisimple. 
\end{theorem}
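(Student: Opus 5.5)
This theorem is quoted from \cite{zhang}, so in the present paper it is used as a black box. If one wants to see why it holds, the natural approach follows Corlette and Donaldson \cite{Cor,Don1} for flat bundles, adapted to the NHYM curvature conditions. The argument splits into the two implications, and in both the central object is the pseudocurvature \(\Xi_h\) together with its contraction \(\Lambda\Xi_h\).

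For \emph{necessity}, suppose \(h\) is harmonic. Take a \(\nabla\)-invariant subbundle (or subsheaf) \(S\subset E\) and let \(\pi\) be its \(h\)-orthogonal projection. Write the second fundamental form of \(S\) with respect to the decomposition \(\nabla=\nabla_h+\psi_h^{\nabla}\). Using \(\nabla^{*,h}_h\psi^{\nabla}_h=0\) and a Bochner/Weitzenböck identity for \(|\psi_h^\nabla|^2\) adapted to the Kähler identities, we integrate \(\operatorname{tr}(\pi\Lambda\Xi_h)\) over \(M\). This should yield the vanishing of the component of \(\psi^{\nabla}_h\) mixing \(S\) and \(S^\perp\). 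Consequently \(S^\perp\) is also \(\nabla\)-invariant, and an induction on rank shows that \(E\) splits as a direct sum of \(\nabla\)-stable pieces, i.e.\ it is semisimple.

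For \emph{sufficiency}, it is enough to treat a stable \(\nabla\), since harmonic metrics on the summands combine orthogonally. We study the energy functional
\[
\mathcal E(h)=\|\psi^{\nabla}_h\|^2_{L^2},
\]
or equivalently the heat flow \(h^{-1}\partial_t h=-2\Lambda\Xi_h\) in the form of a Donaldson functional along the symmetric space \(\mathrm{GL}/U\) of metrics. The key steps are as follows.
\begin{enumerate}[label=\textup{(\arabic*)}]
\item Show that \(\Lambda\Xi_h\) is the gradient of \(\mathcal E\), and that \(\mathcal E\) is convex along geodesics \(h_t=h e^{ts}\).
\item Prove long-time existence of the flow using the maximum principle applied to \(|\Lambda\Xi_h|^2\), which is subharmonic along the flow.
\item Obtain a \(C^0\) bound on \(\log(h_0^{-1}h_t)\). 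This is the main obstacle. Following Uhlenbeck--Yau and Corlette, one argues by contradiction: if the \(C^0\) norm blows up, the normalized endomorphisms \(s_t/\|s_t\|\) converge weakly to a limit whose spectral projections define \(\nabla\)-invariant weakly holomorphic subsheaves. The tension identity then forces these subsheaves to be invariant under the full connection, not just under \(\nabla''\), which contradicts stability.
\item With the \(C^0\) bound in hand, derive higher-order estimates by standard elliptic regularity and extract a limit metric satisfying \(\Lambda\Xi_h=0\).
\end{enumerate}

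The condition \(\Theta\in\Omega^{1,1}\) enters through the Kähler identities. These identities ensure that \(\Lambda\Xi_h\) is the correct moment map and that the invariance argument in step (3) goes through.
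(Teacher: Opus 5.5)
The paper gives no proof of this statement: it quotes \cite[Theorem 1.2]{zhang} and uses it as a black box, exactly as you do, so your proposal agrees with the paper. Your necessity sketch is sound, and no Bochner identity is needed: harmonic metrics are critical points of \(h\mapsto\|\psi^{\nabla}_h\|^2\), and the derivative of this energy under rescaling \(h\) on \(S^\perp\) is a negative multiple of \(\|\beta\|^2\), where \(\beta\) is the \(\operatorname{Hom}(S^\perp,S)\)-block of \(\nabla\).

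One caution concerns your heuristic step (3). The relevant Donaldson-type functional is obtained by integrating the second variation of that energy along \(h_0e^{ts}\), and its quadratic part is \(\int_M\langle\Psi(s)\nabla s,\nabla s\rangle\), built from the fixed full connection \(\nabla\). So the limiting spectral projections \(\pi\) are weakly \(\nabla\)-invariant directly. They are smooth because the identity \(\nabla_{h_0}\pi=-(\Id-\pi)\psi^{\nabla}_{h_0}\pi-\pi\psi^{\nabla}_{h_0}(\Id-\pi)\) bootstraps. Your route through \(\nabla''\)-invariant weakly holomorphic subsheaves, upgraded by an unspecified ``tension identity'', is not how the argument closes, and a merely \(\nabla''\)-invariant subsheaf would not contradict \(\nabla\)-stability.
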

\begin{remark}[\textnormal{\textbf{\cite[Remark 2.4]{zhang}}}, \textnormal{\textbf{\cite[Remark 8.4]{NHYM}}}]
The definition of \(\nabla\)-simplicity used here agrees with
Kaledin--Verbitsky's notion of \(\nabla\)-stability. Consequently,
\(\nabla\)-semisimplicity means that the NHYM bundle is a direct sum of
\(\nabla\)-stable NHYM bundles, as in Conjecture~8.7 of \cite{NHYM}.
Moreover, every \((0,1)\)-stable NHYM connection is \(\nabla\)-stable.
\end{remark}

We restrict to (0,1)-stable NHYM connections $\nabla$. In this case, the corresponding harmonic metric is unique up to a positive scalar multiple.

For each $\nabla \in \widehat{\mathcal M}^s$, let \([h_\nabla]\) be the positive-scale class of the harmonic metric associated with \(\nabla\). Since \(c>0\) is a constant, multiplying \(h\) by \(c\) does not change the fiberwise adjoint of an endomorphism. Indeed, if \(A\in \End(E_x)\), then for any \(s,t\in E_x\),
\begin{equation}
(ch)(As,t)
=
c\,h(As,t)
=
c\,h(s,A^{*,{h}}t)
=
(ch)(s,A^{*,{h}}t).
\end{equation}
Hence \(A^{*,{ch}} = A^{*,{h}}\). The pointwise Hermitian inner product on \(\End E\) is
$
\langle A,B\rangle_{ch}
=
\operatorname{tr}(B^{*,{ch}}A)
=
\operatorname{tr}(B^{*,h}A)
=
\langle A,B\rangle_h.
$ Therefore the pointwise Hermitian pairing on \(\End E\), and hence the induced pairing on \(\Omega^1(\End E)\), is unchanged.
\begin{lemma}\label{lem1}
Suppose \(h\) is a Hermitian metric on \(E\), \(c>0\) is a constant, and \(u \in G\), then for every $\xi \in \Omega^0(\End E)$, \(\eta,\eta_1,\eta_2 \in \Omega^1(\End E)\) and $A \in \Gamma(\End E)$,

\begin{enumerate}[label=\textup{(\roman*)}]
\item 
$
(\eta_1,\eta_2)_{c h} = (\eta_1,\eta_2)_h,\ 
d_{1,\nabla}^{*,ch} = d_{1,\nabla}^{*,h}.
$
\item $(u^{-1} A u)^{*,u^*h}=u^{-1} A^{*,h} u$.
\item $
(u\cdot \eta_1,\, u\cdot \eta_2)_{u^*h} = (\eta_1,\eta_2)_h, \ 
d_{1,u\cdot \nabla}^{*,u^*h} (u \cdot \eta)
=
u\cdot ( d_{1,\nabla}^{*,h}\eta).
$

\end{enumerate}
As a consequence, the gauge action preserves the space of harmonic forms: $u\cdot \mathcal H_\nabla^1=\mathcal H_{u\cdot \nabla}^1 .$

\end{lemma}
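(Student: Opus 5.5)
The argument is a direct pointwise verification from the definitions, followed by integration. Part (i) is already contained in the remark preceding the lemma: since $A^{*,ch}=A^{*,h}$, the pointwise pairing $\langle\eta_1,\eta_2\rangle_{\End,ch}$ coincides with $\langle\eta_1,\eta_2\rangle_{\End,h}$. The volume form $\omega^n/n!$ does not involve $h$, so the $L^2$-pairings agree. The formal adjoint $d_{1,\nabla}^{*,h}$ is characterized entirely by the $L^2$-pairings on $\Omega^0(\End E)$ and $\Omega^1(\End E)$, so uniqueness of formal adjoints gives $d_{1,\nabla}^{*,ch}=d_{1,\nabla}^{*,h}$.

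For (ii), I would compute directly with $(u^*h)(s,t)=h(us,ut)$:
\[
(u^*h)(u^{-1}Au\,s,t)=h(Au\,s,u\,t)=h(us,A^{*,h}ut)=(u^*h)(s,u^{-1}A^{*,h}u\,t).
\]
This gives the claim for sections of $\End E$. It extends to $\End E$-valued forms because the adjoint acts on the form part by conjugation $\bar\eta$, which commutes with conjugation by $u$.

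For (iii), the gauge action on $\Omega^1(\End E)$ is $u\cdot\eta=u^{-1}\eta u$. By (ii), the pointwise pairing transforms as
\[
\operatorname{tr}\bigl((u^{-1}\eta_2u)^{*,u^*h}u^{-1}\eta_1u\bigr)=\operatorname{tr}\bigl(u^{-1}\eta_2^{*,h}\eta_1u\bigr)=\operatorname{tr}(\eta_2^{*,h}\eta_1).
\]
Here the Kähler contraction of form indices is unaffected, since $u$ is a $0$-form. Integrating gives the first identity.

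For the adjoint identity, the key step is the intertwining relation
\[
d_{1,u\cdot\nabla}(u^{-1}\xi u)=u^{-1}(\nabla\xi)u,
\]
which holds because $u\cdot\nabla=u^{-1}\nabla u$ acts on $\End E$ by conjugation. Then for any test section $\xi$,
\[
\bigl(\xi',d^{*,u^*h}_{1,u\cdot\nabla}(u\cdot\eta)\bigr)_{u^*h}=\bigl(u\cdot\nabla\xi,\,u\cdot\eta\bigr)_{u^*h}=(\nabla\xi,\eta)_h=(\xi,d_{1,\nabla}^{*,h}\eta)_h=\bigl(\xi',u\cdot d_{1,\nabla}^{*,h}\eta\bigr)_{u^*h},
\]
where $\xi'=u^{-1}\xi u$. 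Since $\xi\mapsto u^{-1}\xi u$ is a bijection of $\Omega^0(\End E)$, nondegeneracy of the pairing gives the identity.

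The consequence follows once $\mathcal H^1_\nabla$ is understood as $\ker d_{2}\cap\ker d_1^{*,h}$, with $h=h_\nabla$ the harmonic metric. One then needs:
\begin{itemize}
\item $d_{2,u\cdot\nabla}(u\cdot\eta)=u\cdot d_2\eta$, by the same conjugation argument, since type decomposition and $\Lambda$ commute with conjugation;
\item the adjoint identity from (iii);
\item $u^*h_\nabla$ is harmonic for $u\cdot\nabla$.
\end{itemize}
The last point I expect to be the main subtlety. I would check it from definition (i) of harmonicity: the decomposition $\nabla=\nabla_h+\psi_h$ pulls back to
\[
u\cdot\nabla=u\cdot\nabla_h+u^{-1}\psi_hu,
\]
where $u\cdot\nabla_h$ is $u^*h$-unitary and $u^{-1}\psi_hu$ is $u^*h$-self-adjoint by (ii). Uniqueness of the decomposition and the same conjugation argument then give $(u\cdot\nabla_h)^{*,u^*h}(u^{-1}\psi_hu)=u^{-1}(\nabla_h^{*,h}\psi_h)u=0$. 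Combined with (i), this makes the choice of scale of the harmonic metric irrelevant, so $u\cdot\mathcal H^1_\nabla=\mathcal H^1_{u\cdot\nabla}$.
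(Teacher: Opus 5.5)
Your proposal is correct and follows essentially the same route as the paper: scale-invariance of the fiberwise adjoint for (i), direct computation with $(u^*h)(s,t)=h(us,ut)$ for (ii), and pointwise trace invariance together with the intertwining $d_{1,u\cdot\nabla}(u\cdot\xi)=u\cdot d_{1,\nabla}\xi$ for the adjoint identity in (iii). Your treatment of the final consequence is more explicit than the paper's, which leaves it implicit at this point. You give the equivariance of $d_2$, the harmonicity of $u^*h_\nabla$ for $u\cdot\nabla$, and use (i) to absorb the scale; the paper only verifies the harmonicity of $u^*h$ later, in Lemma~\ref{lem4}(i), by the same decomposition argument you sketch.
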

\begin{proof}
(i) is clear since $
(\xi,d_{1,\nabla}^{*,ch}\eta)_{ch}=(d_{1,\nabla}\xi,\eta)_{ch}
=
(d_{1,\nabla}\xi,\eta)_h
=
(\xi,d_{1,\nabla}^{*,h}\eta)_h,
$
and (ii) and (iii) follow by 
\begin{equation}
\begin{aligned}
\bigl((u^{-1}Au)s,t\bigr)_{u^*h}
&=
\bigl(u(u^{-1}Au)s,ut\bigr)_h 
=
(Aus,ut)_h 
=
(us,A^{*,h}ut)_h \\
&=
\bigl(us,u(u^{-1}A^{*,h}u)t\bigr)_h 
=
\bigl(s,(u^{-1}A^{*,h}u)t\bigr)_{u^*h}.
\end{aligned}
\end{equation}
\begin{equation}
\begin{aligned}
\langle u\cdot A,u\cdot B\rangle_{u^*h}
&=
\operatorname{tr}\left((u^{-1}Bu)^{*,{u^*h}}(u^{-1}Au)\right)
=
\operatorname{tr}\left(u^{-1}B^{*,h}u\,u^{-1}Au\right)\\
&=
\operatorname{tr}\left(u^{-1}B^{*,h}Au\right)
=
\operatorname{tr}(B^{*,h}A)
=
\langle A,B\rangle_h.
\end{aligned}
\end{equation}
\begin{equation}
\begin{aligned}
(u\cdot\xi,\ d_{1,u\cdot \nabla}^{*,u^*h}(u\cdot\eta))_{u^*h}
&=
(d_{1,u\cdot \nabla}(u\cdot\xi),\ u\cdot\eta)_{u^*h}
=
(u\cdot d_{1,\nabla}\xi,\ u\cdot\eta)_{u^*h}\\
&=
(d_{1,\nabla}\xi,\eta)_h
=
(\xi,d_{1,\nabla}^{*,h}\eta)_h
=
(u\cdot\xi,\ u\cdot d_{1,\nabla}^{*,h}\eta)_{u^*h}.
\end{aligned}
\end{equation}
\end{proof}

\section{Hermitian metric on the moduli space of NHYM connections}

In this section, we construct a Hermitian metric on $\widehat{\mathcal M}^s$.
The construction is carried out locally on gauge slices and consists of the following steps:
\begin{enumerate}[label=\textup{(\roman*)}]
\item Define a Hermitian pairing on each local slice by using the harmonic metric associated with the corresponding $(0,1)$-stable NHYM connection.

\item Show that the local Hermitian pairing is well-defined and varies smoothly on each slice.

\item Prove that these local Hermitian pairings are compatible with the transition maps between slices.

\end{enumerate}

Consequently, the local pairings descend to a Hermitian metric
on $\widehat{\mathcal M}^s$.

The construction is analogous to Itoh's construction in \cite[Section~3]{Itoh2}, except that in the present setting the Hermitian metric is chosen pointwise on the moduli space rather than fixed once and for all, since the complex gauge action does not preserve a fixed Hermitian metric, a
single background metric cannot be used in the construction. Instead, we choose
the Hermitian metric pointwise, in such a way that it transforms equivariantly
under gauge transformations up to a positive scalar. Since such a scalar
rescaling does not change the induced pairing on \(\End E\)-valued forms, the
resulting slice metric is invariant under gauge-induced transition maps, see Lemma~\ref{lem4}\textup{(i)}.

We begin by recalling the slice of the NHYM moduli space. The following definitions are independent of the choice of representative element in the harmonic metric class $[h_\nabla]$. 
\begin{equation}
S^{h_\nabla}_{\nabla,\varepsilon}=\{\alpha \in \Omega^1 : |\alpha|_k <\varepsilon, \nabla^{*,h_\nabla} \alpha=0,  (N_\alpha)^{2,0}=(N_\alpha)^{0,2}=\Lambda (N_\alpha)=0\},\end{equation}
\begin{equation}T_\alpha S^{h_\nabla}_{\nabla,\varepsilon}=\{\beta \in \Omega^1 : \nabla^{*,h_\nabla} \beta=0, d_{2,\alpha}(\beta)=0\}.\end{equation}

In particular,
\begin{equation}T_\nabla S^{h_\nabla}_{\nabla,\varepsilon}=\{\beta \in \Omega^1 : \nabla^{*,h_\nabla} \beta=0, d_{2,\nabla}(\beta)=0\}=\ker d^*_{1,\nabla}\cap \ker d_{2,\nabla}\cong \mathcal{H}^1_{\nabla}.\end{equation}

\begin{lemma}\label{lem4}
Fix a reference Hermitian metric \(h\) on \(E\). For every $(0,1)$-stable NHYM connection \(\nabla\), let \([h_\nabla]\) denote the class of harmonic metrics associated with \(\nabla\). Then there exists a unique representative $ h_\nabla^{\mathrm{n}}\in [h_\nabla]$
satisfying the normalization condition
$
\int_M
\log\det (h^{-1}$ $ \cdot h_\nabla^{\mathrm{n}})\frac{\omega^n}{n!}
=0.
$ 
Thus the assignment
\[
\nabla\longmapsto h_\nabla^{\mathrm{n}}
\]
is an assignment to actual Hermitian metrics on \(E\). Moreover, it satisfies the following properties:
\begin{enumerate}[label=\textup{(\roman*)}]
\item  
$
h_{u\cdot \nabla}^{\mathrm{n}}=
c(u,\nabla) u^*h_\nabla^{\mathrm{n}}
$
for a uniquely determined positive constant \(c(u,\nabla)>0\). 

\item  After possibly shrinking \(\varepsilon_0\), the assignment $
\nabla\longmapsto h_\nabla^{\mathrm{n}} $
is smooth on \(S_{\nabla_0,\varepsilon_0}\).
\end{enumerate}

\end{lemma}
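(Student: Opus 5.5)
Existence and uniqueness of the normalized representative come first. By Theorem~\ref{ha} and the remark following it, a $(0,1)$-stable NHYM connection $\nabla$ has a harmonic metric $h_\nabla$, unique up to a positive constant. For $c>0$,
\[
\log\det\bigl(h^{-1}\cdot ch_\nabla\bigr)=r\log c+\log\det\bigl(h^{-1}h_\nabla\bigr), \qquad r=\operatorname{rank}E .
\]
So the function $c\mapsto\int_M\log\det(h^{-1}\cdot ch_\nabla)\,\omega^n/n!$ is affine in $\log c$, with slope $r\operatorname{Vol}(M)>0$. It therefore has exactly one zero, and this zero defines $h^{\mathrm n}_\nabla$. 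The choice does not depend on which representative of $[h_\nabla]$ we start from.

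For (i), we first check that $u^*h_\nabla$ is harmonic for $u\cdot\nabla=u^{-1}\nabla u$. Unwinding the definitions, $(u\cdot\nabla)^{\dagger_{u^*h}}=u\cdot(\nabla^{\dagger_h})$, so the unitary part and the self-adjoint part are conjugated: $(u\cdot\nabla)_{u^*h}=u\cdot\nabla_h$ and $\psi^{u\cdot\nabla}_{u^*h}=u^{-1}\psi^\nabla_h u$. The same computation as in Lemma~\ref{lem1}(iii), applied to the formal adjoint of $\nabla_h$ on $\Omega^1(\End E)$, then gives
\[
(u\cdot\nabla)_{u^*h}^{*,u^*h}\psi^{u\cdot\nabla}_{u^*h}=u^{-1}\bigl(\nabla_h^{*,h}\psi^\nabla_h\bigr)u=0 .
\]
Hence $u^*h^{\mathrm n}_\nabla$ is harmonic for $u\cdot\nabla$. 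The connection $u\cdot\nabla$ is again $(0,1)$-stable, because $u$ maps its $(0,1)$-holomorphic structure isomorphically onto that of $\nabla$. By uniqueness up to scale, $h^{\mathrm n}_{u\cdot\nabla}=c\,u^*h^{\mathrm n}_\nabla$ with $c>0$. Explicitly, $\log c=-\frac{1}{r\operatorname{Vol}(M)}\int_M\log\det\bigl(h^{-1}u^*h^{\mathrm n}_\nabla\bigr)$, and this value is unique by the first step.

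For (ii), which I expect to be the main obstacle, I would use the implicit function theorem on Banach manifolds. Fix $h_0=h^{\mathrm n}_{\nabla_0}$ and write metrics as $h_0e^{s}$, with $s$ a $W^{k,2}$ section of $h_0$-self-adjoint endomorphisms. Consider the map
\[
\Phi(\alpha,s)=\Bigl(\Lambda\Xi_{h_0e^s}(\nabla_0+\alpha)-\tfrac{1}{\operatorname{Vol}(M)}\textstyle\int_M\operatorname{tr}(\cdot),\ \int_M\log\det(h^{-1}h_0e^s)\Bigr),
\]
suitably split into a trace-free part and a trace part. The linearization of $\Lambda\Xi$ in $s$ at $(0,0)$ is a Laplace-type operator, essentially $\tfrac12(\nabla_{h_0}^*\nabla_{h_0}+\text{terms in }\psi)$. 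Using the harmonicity identity, it is self-adjoint and nonnegative, and its kernel consists of $s$ with $\nabla_0 s=0$. By $\nabla$-simplicity this kernel is $\mathbb R\cdot\Id$. The normalization functional is $\int\operatorname{tr}s$ to first order, so it is nonzero on $\Id$. The trace part of $\Lambda\Xi$ integrates to zero, and this accounts for the cokernel. Therefore the full linearization is an isomorphism between the appropriate Sobolev spaces, and the implicit function theorem yields a smooth, in fact analytic, map $\alpha\mapsto s(\alpha)$ on a smaller slice with $h_0e^{s(\alpha)}$ harmonic and normalized. By the uniqueness established in the first step, $h^{\mathrm n}_{\nabla_0+\alpha}=h_0e^{s(\alpha)}$. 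The delicate points are the precise Fredholm and cokernel bookkeeping, and ensuring that the solution is smooth rather than merely $W^{k,2}$; the latter follows from elliptic regularity.
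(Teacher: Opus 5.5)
Your proposal is correct and follows essentially the paper's route: for (i), conjugating the splitting $\nabla=\nabla_h+\psi^\nabla_h$ by $u$ shows that $u^*h^{\mathrm n}_\nabla$ is harmonic for $u\cdot\nabla$, and the scale is then fixed by the normalization; for (ii), the implicit function theorem is applied in $h_0e^{f}$, where the Laplace-type linearization has kernel $\mathbb R\,\Id$ by simplicity and that kernel is removed by the normalization. Your affine-in-$\log c$ argument and your check that $u\cdot\nabla$ stays $(0,1)$-stable supply points the paper leaves implicit, and the remaining differences are cosmetic: the paper restricts the domain to $\int_M\operatorname{tr}f=0$ instead of appending a normalization functional (since $\log\det(h^{-1}h_0e^{f})=\log\det(h^{-1}h_0)+\operatorname{tr}f$, that functional is exactly, not just to first order, $\int_M\operatorname{tr}f$), and it handles your ``bookkeeping'' concern by using $e^{f/2}(\nabla_{\alpha,h_f})^{*,h_f}\psi^{\nabla_\alpha}_{h_f}e^{-f/2}$, which lands in a fixed space of $h_0$-self-adjoint sections with vanishing integrated trace.
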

\begin{proof}

We first prove (i). In this step only, \(k\) denotes a Hermitian metric. Let $k=h_\nabla^{\mathrm{n}},\ k'=u^* k$, then by definition, \begin{equation}d (k'(s,t))=d k(u s,u t)=k(\nabla_k(u  s),u t)+k(u  s,\nabla_k(u  t)).\end{equation}

Since $u ((u \cdot \nabla_k)s)=u  ((u^{-1}\circ \nabla_k \circ u)s)=\nabla_k (u s)$,
we have \[d (k'(s,t))=k(u ((u \cdot \nabla_k)s),u t)+k(u s,u ((u \cdot \nabla_k)t))=k'((u \cdot \nabla_k)s,t)+k'(s,(u \cdot \nabla_k)t).\]

Similarly, \(u\cdot\psi_k\) is \(k'\)-self-adjoint, since
\begin{equation}
k'((u\cdot\psi_k)s,t)
=
k(\psi_k(us),u t)
=
k(us,\psi_k(ut))
=
k'(s,(u\cdot\psi_k)t).
\end{equation}
If we write $u \cdot \nabla=(u \cdot \nabla)_{k'}+(u \cdot \psi)_{k'}$, then $(u \cdot \nabla)_{k'}=u \cdot \nabla_{k}$ and $(u \cdot \psi)=u \cdot \psi_{k}$.

Consequently, 
\begin{equation}
(u\cdot \nabla_k)^{*,k'}(u\cdot \psi_k)
=
u\cdot (\nabla_k^{*,k}\psi_k).
\end{equation}

Thus, if \(k\) is harmonic for \(\nabla\), then \(u^*k\) is harmonic for \(u\cdot \nabla\). The constant $c(u,\nabla)$ is uniquely determined by the normalization condition.

It remains to prove (ii). Consider the slice $S^{h_{\nabla_0}^{\mathrm{n}}}_{\nabla_0,\varepsilon_0}$ and let 
$
h_0=h_{\nabla_0}^{\mathrm{n}}
$. 
The elements of the slice are of the form  
$
\nabla_\alpha=\nabla_0+\alpha\in S^{h_{0}}_{\nabla_0,\varepsilon_0}.
$

Every Hermitian metric sufficiently close to \(h_0\) can be written uniquely as
$
h_f=h_0e^f,
$
where
$
f=f^{*,{h_0}}\in \Herm(E,h_0)=\{f\in \End(E)|f^{*,{h_0}}=f\}.
$ 
Since harmonic metrics are unique only up to positive constant multiples, we impose the normalization
$
\int_M \operatorname{tr}(f)\,\frac{\omega^n}{n!}=0.
$
After Sobolev completion, set
\begin{equation}
B_k=
\left\{
f\in W^{k,2}(\Herm(E,h_0))
\;\middle|\;
\int_M \operatorname{tr}(f)\,\frac{\omega^n}{n!}=0
\right\}.
\end{equation}
Let $
F(\nabla_\alpha,f)=e^{f/2}(\nabla_{\alpha,h_f})^{*,h_f}\psi^{\nabla_{\alpha}}_{h_f}e^{-f/2} $
denote the harmonic metric equation for \(\nabla_\alpha\) with respect to the metric \(h_f=h_0e^f\) with $F(\nabla_0,0)=0$. If an endomorphism \(A\) is self-adjoint with respect to \(h_f\), then
\(e^{f/2} A e^{-f/2}\) is self-adjoint with respect to \(h_0\). For \(k\) sufficiently large,
\begin{equation}
F:
S^{h_0}_{\nabla_0,\varepsilon_0}\times B_k
\longrightarrow B_{k-2}
\end{equation}
is a smooth map.

By \cite[Lemma 2.1]{zhang2}, we obtain
$LF
:=
D_f F \big|_{(\nabla_0,0)}=
-\frac12
(\nabla_0^{\dagger_{h_0}})^{*,h_0}\nabla_0^{\dagger_{h_0}}: B_k
\longrightarrow B_{k-2}$.

It is elliptic, self-adjoint, and Fredholm of index zero. Its kernel consists of \(\nabla_0^{\dagger_h}\)-parallel
\(h_0\)-self-adjoint endomorphisms satisfying the above trace normalization. 

 Since \(\nabla_0\) is simple,
$
\End_{\nabla_0}(E)=\mathbb C\Id.
$
The equations $s=s^{*,h_0}$ and $\nabla_0^{\dagger_{h_0}}s=0$ imply $\nabla_0 s=0$, and thus $s\in \mathbb C \Id$.
Thus the \(h_0\)-self-adjoint part of the kernel is \(\mathbb R\Id\), and the normalization removes this
one-dimensional kernel. Hence
$
\ker LF=0.
$
Since \(LF\) is Fredholm of index zero, it follows that
$
LF:B_k\longrightarrow B_{k-2}
$
is an isomorphism.

The implicit function theorem therefore gives, after shrinking the slice if necessary, a unique map from $S^{h_0}_{\nabla_0,\varepsilon_0}$ to $B_k$, 
$
\alpha\longmapsto f(\alpha)
$
such that
$
F(\nabla_\alpha,f(\alpha))=0$, and
$f(0)=0.
$
By elliptic regularity,
\(f(\alpha)\) is in fact smooth.

Therefore the normalized harmonic metric
$
h_{\alpha}=h_0e^{f(\alpha)}
$
varies smoothly on the local slice. It is easy to prove $h_{\alpha}$ satisfies the normalization condition since $
\log\det\bigl(h^{-1}h_\alpha\bigr)
=
\log\det\bigl(h^{-1}h_0\bigr)+\operatorname{tr}\bigl(f(\alpha)\bigr)
$.

\end{proof}

Throughout the rest of the paper, the metric entering the definition of the slice is always the normalized representative \(h_\nabla^{\mathrm n}\). Thus, whenever \(h_\nabla\) appears in the slice notation, it is understood to mean \(h_\nabla^{\mathrm n}\). When no confusion is likely, we suppress this metric from the notation and write
$
S_{\nabla,\varepsilon}$ for $S_{\nabla,\varepsilon}^{h_{\nabla}^{\mathrm{n}}}.
$

\begin{lemma}
Fix $S^{h_\nabla}_{\nabla,\varepsilon}$, let $S^{h_{\nabla'}}_{\nabla',\varepsilon'}$ be another slice centered at $\nabla'=\nabla +\alpha_0 \in S^{h_\nabla}_{\nabla,\varepsilon}$, then there is a smooth mapp
\begin{equation}
u: U_{\nabla \nabla'}:=S^{h_\nabla}_{\nabla,\varepsilon} \cap p^{-1}(p(S^{h_{\nabla'}}_{\nabla',\varepsilon'})) \to G, \ \alpha \mapsto u_\alpha
\end{equation}
such that $u_\alpha \cdot(\nabla+\alpha) \in  S^{h_{\nabla'}}_{\nabla',\varepsilon'}$ for $\alpha \in U_{\nabla \nabla'}$ near $\alpha_0$, and $u_{\alpha_0}\cdot (\nabla+\alpha_0)=\nabla+\alpha_0$.
\end{lemma}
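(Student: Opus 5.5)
The plan is the standard Kuranishi/Itoh gauge-fixing argument via the implicit function theorem. Write \(h'=h^{\mathrm n}_{\nabla'}\) for the normalized harmonic metric at \(\nabla'\). The slice condition at \(\nabla'\) is \(\nabla'^{*,h'}\beta=0\), where \(\beta=u\cdot(\nabla+\alpha)-\nabla'\). The curvature conditions \((N_\beta)^{2,0}=(N_\beta)^{0,2}=\Lambda N_\beta=0\) for \(\nabla'+\beta\) hold automatically, because \(u\cdot(\nabla+\alpha)\) is again NHYM whenever \(\nabla+\alpha\) is. So the only equation to solve is the Coulomb-type gauge condition
\[
\Phi(\alpha,u):=\nabla'^{*,h'}\bigl(u\cdot(\nabla+\alpha)-\nabla'\bigr)=\nabla'^{*,h'}\bigl(u^{-1}\nabla' u+u^{-1}(\alpha-\alpha_0)u\bigr)=0 .
\]
Here we used \(u\cdot(\nabla+\alpha)=u^{-1}\nabla'u+u^{-1}(\alpha-\alpha_0)u\). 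The goal is to find \(u=u_\alpha\) depending smoothly on \(\alpha\), with \(u_{\alpha_0}=\Id\).

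Work with Sobolev completions: \(G_{k+1}\) is the \(W^{k+1,2}\) gauge group, a Hilbert Lie group for large \(k\), and \(\alpha\) ranges in the \(W^{k,2}\) slice. Then \(\Phi\) is a smooth map into \(W^{k-1,2}(\End E)\). The slice itself is a smooth Banach submanifold near \(\alpha_0\), because \([\nabla']\) lies in the unobstructed locus; alternatively, \(\Phi\) can be defined on an open set of all \(\alpha\) and then restricted. We have \(\Phi(\alpha_0,\Id)=0\). The linearization in \(u\) at \((\alpha_0,\Id)\), in the direction \(u=\exp(\xi)\), is
\[
D_u\Phi(\xi)=\nabla'^{*,h'}\nabla'\xi=d_{1,\nabla'}^{*,h'}d_{1,\nabla'}\xi ,
\]
a second-order elliptic, self-adjoint, nonnegative operator \(W^{k+1,2}\to W^{k-1,2}\). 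Its kernel is \(\{\xi:\nabla'\xi=0\}=\mathbb C\Id\), since \(\nabla'\) is \((0,1)\)-stable and hence simple (remark above). Its image is the \(L^2\)-orthogonal complement of \(\mathbb C\Id\). Scalars act trivially, since \(u=c\Id\) fixes every connection, and \(\Phi\) always takes values in that complement because \((\nabla'^{*,h'}\beta,\Id)=(\beta,\nabla'\Id)=0\).

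Accordingly, restrict \(u\) to \(\exp\) of the trace-free part, or equivalently divide by the center \(\mathbb C^*\Id\) and solve for \(\xi\in W^{k+1,2}(\End_0E)\). Take the target to be \((\mathbb C\Id)^{\perp}\). Then \(D_u\Phi\) is an isomorphism, and the implicit function theorem gives a smooth map \(\alpha\mapsto u_\alpha=\exp(\xi(\alpha))\) on a neighborhood of \(\alpha_0\) with \(\Phi(\alpha,u_\alpha)=0\) and \(u_{\alpha_0}=\Id\). Shrinking the neighborhood keeps \(|u_\alpha\cdot(\nabla+\alpha)-\nabla'|_k<\varepsilon'\), so \(u_\alpha\cdot(\nabla+\alpha)\in S_{\nabla',\varepsilon'}\). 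Elliptic regularity then makes \(u_\alpha\) smooth whenever \(\alpha\) is: \(u_\alpha\) satisfies an elliptic equation with smooth coefficients, so its Sobolev regularity can be bootstrapped.

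The main obstacle is passing from "near \(\alpha_0\)" to a map defined on all of \(U_{\nabla\nabla'}\). This needs uniqueness: by the properness of the gauge action and the homeomorphism property of \(p\) on both slices, any \(\alpha\in U_{\nabla\nabla'}\) has a unique point of \(S_{\nabla',\varepsilon'}\) in its orbit. Simplicity makes the stabilizer equal to \(\mathbb C^*\Id\), so the gauge element is unique modulo scalars. Covering \(U_{\nabla\nabla'}\) by such implicit-function neighborhoods, the local solutions agree on overlaps up to \(\mathbb C^*\). Fixing the normalization (trace-free logarithm near the identity, or \(\det\)-normalization) pins them down and patches them into one smooth map. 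Since the statement only requires the property near \(\alpha_0\), I would present the local construction in full and remark that the global extension follows by this uniqueness-and-patching argument.
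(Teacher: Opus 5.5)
Your strategy is the same as the paper's. You solve the Coulomb condition $d_{1,\nabla'}^{*,h_{\nabla'}}\bigl(\exp(\xi)\cdot(\nabla+\alpha)-\nabla'\bigr)=0$ by the implicit function theorem, using that the linearization $d_{1,\nabla'}^{*,h_{\nabla'}}d_{1,\nabla'}$ has kernel $\mathbb C\Id$ by simplicity, and you remove that kernel by a normalization. One step, however, fails as you wrote it. You take $\xi\in W^{k+1,2}(\End_0E)$ (pointwise trace-free), take the target to be $(\mathbb C\Id)^{\perp}=\{\eta:\int_M\operatorname{tr}\eta\,\omega^n/n!=0\}$, and assert that $D_u\Phi$ is an isomorphism. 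It is not. Since $\operatorname{tr}(\nabla'\xi)=d\operatorname{tr}\xi$, the connection $\nabla'$ preserves the $h'$-orthogonal splitting $\End E=\End_0E\oplus\mathbb C\Id$. Hence $d^*_{1,\nabla'}d_{1,\nabla'}$ maps $\End_0E$ into $\End_0E$, and its image misses the infinite-dimensional space $\{f\Id:\int_Mf=0\}\subset(\mathbb C\Id)^\perp$. Restricting to trace-free $\xi$ and dividing by the center $\mathbb C^*\Id$ are \emph{not} equivalent. The former also discards the nonconstant scalar gauge transformations $f\Id$, which act nontrivially by $\nabla\mapsto\nabla+f^{-1}df$.

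Either of two repairs works.
\begin{itemize}
\item Divide by the center. This corresponds to $\xi\in(\mathbb C\Id)^\perp$, i.e.\ $\int_M\operatorname{tr}\xi=0$, which is exactly the paper's $U_{k+1}\to U_{k-1}$. With this choice $\Phi$ takes values in $(\mathbb C\Id)^\perp$ for \emph{every} $\alpha$, because $(\nabla'^{*}\beta,\Id)=(\beta,\nabla'\Id)=0$. So you may define $\Phi$ on an open set of all $\alpha$ and never need the slice to be a manifold.
\item Keep $\xi\in\End_0E$, but take the target to be $W^{k-1,2}(\End_0E)$. You must then check that $\Phi(\alpha,\exp\xi)$ is trace-free. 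Since $\operatorname{tr}(u^{-1}\nabla'u)=d\log\det u=0$, its scalar part is $\tfrac1r d^*\operatorname{tr}(\alpha-\alpha_0)\,\Id$, where $r=\operatorname{rank}E$. This vanishes for $\alpha$ in the slice at $\nabla$, because $\nabla^{*,h_\nabla}\alpha=0$ forces $d^*\operatorname{tr}\alpha=0$. It does not vanish for general $\alpha$. So in this version you must work on the slice, or on the closed affine subspace $\{d^*\operatorname{tr}\alpha=0\}$, rather than on an open set of all $\alpha$.
\end{itemize}

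Your closing patching argument goes beyond what the paper proves; its proof only produces $u_\alpha$ near $\alpha_0$, and the statement only requires that. Note also that a $\det u=1$ normalization still leaves an ambiguity by $r$-th roots of unity. Gluing into a single-valued map on all of $U_{\nabla\nabla'}$ therefore needs more care, although the transition map $\tau$ itself is unaffected.
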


\begin{proof}
The proof is similar to the proof of \cite[Theorem 7.3.17]{koba1}. Define \begin{equation}\begin{aligned}\Psi:W^{k,2}(\Omega^1(\End E)) \times U_{k+1} &\longrightarrow U_{k-1}\\ (\alpha,\xi)&\mapsto  d_{1,\nabla'}^{*,h_{\nabla'}}(\exp(\xi)\cdot (\nabla+\alpha)-\nabla'),\end{aligned}\end{equation} where we set $
U_k
=
\left\{
\xi\in W^{k,2}(\End(E))
\;\middle|\;
\int_M \operatorname{tr}(\xi)\,\frac{\omega^n}{n!}=0
\right\},
$
then $\left. D_\xi \Psi \right|_{(\alpha_0,0)}=d^{*,h_{\nabla'}}_{\nabla'}d_{\nabla'}: U_{k+1} \to U_{k-1}$ is elliptic and self-adjoint. It is an isomorphism.
Therefore the Banach implicit function theorem gives, for $\alpha$ near \(\alpha_0\), a unique smooth map
$
\xi=\xi(\alpha)\in U_{k+1}$, with $
\xi(\alpha_0)=0,
$
such that
$
\Psi(\alpha,\xi(\alpha))=0.
$
Set
$
u_\alpha=\exp(\xi(\alpha)),
$
then
$
d_{1,\nabla'}^{*,h_{\nabla'}}
\left(
u_\alpha\cdot(\nabla+\alpha)-\nabla'
\right)=0,
$
so
$
u_\alpha\cdot(\nabla+\alpha)\in S_{\nabla',\varepsilon'}^{h_{\nabla'}}. 
$ The smoothness assertion follows from elliptic regularity.
\end{proof}

The preceding lemma gives an explicit formula for the transition map between the slices:
$
\tau_{\nabla\nabla'}
:
U_{\nabla\nabla'}
\longrightarrow
U_{\nabla'\nabla}
$
by
$
\tau_{\nabla\nabla'}(\nabla+\alpha)
=
u_\alpha\cdot(\nabla+\alpha).
$

\

The tangent space at \(\nabla+\alpha\) is $T_{\nabla+\alpha}S_{\nabla,\varepsilon}=\ker d^*_{1,\nabla} \cap \ker d_{2,\alpha}$ , we define an inner product by
\begin{equation}
\langle \beta_1,\beta_2\rangle^\mathrm{h}_{\nabla+\alpha}
=(\beta_1^\mathrm{h},\beta_2^\mathrm{h})_{\End E,h_{\alpha}}
=
\int_M \langle \beta_1^\mathrm{h},\beta_2^\mathrm{h}\rangle_{\End E,h_{\alpha}}\,\frac{\omega^n}{n!}.
\end{equation}
Here \(\beta_1^\mathrm{h}, \beta_2^\mathrm{h} \in \ker d_{2,\alpha} \cap \ker d_{1,\alpha}^* \) are the harmonic parts with respect to the
splitting
$
\Omega^1(\End (E))
=
\operatorname{Im} d_{1,\alpha}
\oplus
\ker d_{1,\alpha}^* 
$ \cite[(3.4)]{Itoh2}. The definition is consistent with \cite[(3.3)]{Itoh2}. The Hermitian pairing
$
\langle \cdot,\cdot \rangle^{\mathrm h}_{\nabla+\alpha}
$
is also denoted by
$
(\cdot,\cdot)_{h_\alpha^{\mathrm h}}.
$
The corresponding Riemannian metric $g_\alpha^{\mathrm h}$ is defined by taking the real part.

It remains only to prove that this construction is independent of the choice of slice. The proof of the following proposition follows from \cite[(3.5)]{Itoh2}.
\begin{proposition}
Let
$
\beta \in T_{\nabla+\alpha} S_{\nabla,\varepsilon},
$
and let \(\alpha(t)\) be a smooth curve in \(S_{\nabla,\varepsilon}\) with
\(\alpha(0)=\alpha\) and \(\dot\alpha(0)=\beta\).
Then the differential of $\tau$ at $\nabla+\alpha$ in the direction $\beta$ is $(\tau_{\nabla \nabla'})_{*,\nabla+\alpha}
:
T_{\nabla+\alpha}S_{\nabla,\varepsilon}
\longrightarrow
T_{\tau(\nabla+\alpha)}S_{\nabla',\varepsilon'}$ given by
\begin{equation}
\tau_{*,\nabla+\alpha}(\beta)=d_{1,{\nabla'+\alpha'}}\psi+u_\alpha(\beta)
\end{equation}
for some $\psi \in \Omega^0(\End E)$.
\end{proposition}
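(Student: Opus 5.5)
The plan is to differentiate the defining identity of the transition map along the curve. By the preceding lemma, for $t$ small we have
\[
\tau_{\nabla\nabla'}(\nabla+\alpha(t))=u_{\alpha(t)}\cdot(\nabla+\alpha(t))=u_{\alpha(t)}^{-1}(\nabla+\alpha(t))u_{\alpha(t)},
\]
where $t\mapsto u_{\alpha(t)}\in G$ is smooth. The smoothness comes from the implicit function theorem applied to $\Psi$, using the smooth dependence $\alpha\mapsto\xi(\alpha)$. Write $u_t=u_{\alpha(t)}$, $u=u_0=u_\alpha$ and $\nabla'+\alpha'=u\cdot(\nabla+\alpha)$. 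The differential is then
\[
\tau_*(\beta)=\frac{d}{dt}\Big|_{t=0}\bigl(u_t^{-1}(\nabla+\alpha(t))u_t-\nabla'\bigr).
\]

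Next I split the derivative into two pieces. The piece from $\dot\alpha(0)=\beta$ with $u$ frozen is $u^{-1}\beta u$, which is $u_\alpha(\beta)$ in the paper's notation. The piece from varying $u_t$ is handled by setting $v_t=u^{-1}u_t$, so that $v_0=\Id$, and $\psi:=\dot v_0=u^{-1}\dot u_0\in\Omega^0(\End E)$. Since
\[
u_t^{-1}(\nabla+\alpha)u_t=v_t^{-1}\bigl(u^{-1}(\nabla+\alpha)u\bigr)v_t=v_t^{-1}(\nabla'+\alpha')v_t,
\]
differentiating at $t=0$ gives
\[
-\psi(\nabla'+\alpha')+(\nabla'+\alpha')\psi=[\nabla'+\alpha',\psi]=d_{1,\nabla'+\alpha'}\psi.
\]
Here the last equality is the formula $d_{1,\alpha}\xi=\nabla\xi+[\alpha,\xi]$ from the earlier lemma, applied at the base point $\nabla'+\alpha'$. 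Summing the two pieces gives the claimed formula.

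I also need to check that both sides lie in $T_{\tau(\nabla+\alpha)}S_{\nabla',\varepsilon'}$. The left side is a velocity of a curve in the slice $S_{\nabla',\varepsilon'}$, so it automatically satisfies $d_{1,\nabla'}^{*}(\cdot)=0$ and the linearized curvature equations. For the right side, note that $d_{2,\alpha'}(d_{1,\alpha'}\psi)=0$ because the curvature at an NHYM point has the required type and trace, so the complex property $d_2d_1=0$ holds there. In addition, $d_{2,\alpha'}(u^{-1}\beta u)=u^{-1}d_{2,\alpha}(\beta)u=0$ by gauge equivariance. So the right side is tangent to the NHYM locus, and the slice condition pins down $\psi$ as the solution of $d_{1,\nabla'}^{*}d_{1,\nabla'+\alpha'}\psi=-d_{1,\nabla'}^{*}(u^{-1}\beta u)$. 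This identification is not needed for the existence statement, but it makes the formula meaningful.

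The main obstacle is justifying the differentiation. I have to check that $t\mapsto u_t$ is differentiable in the $W^{k+1,2}$ topology, so that the derivative of the gauge action can be computed termwise; this follows from the smoothness of $\xi(\alpha)$ from the implicit function theorem together with the smoothness of $\exp$ and of the multiplication maps on Sobolev spaces for large $k$. The rest is the routine bookkeeping above, as in \cite[(3.5)]{Itoh2}.
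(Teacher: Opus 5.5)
Your proposal is correct and follows essentially the same route as the paper. The paper simply defers to Itoh's (3.5), which is this same differentiation of $u_{\alpha(t)}\cdot(\nabla+\alpha(t))$: one splits off the frozen-gauge term $u_\alpha\cdot\beta$ and the infinitesimal gauge term $d_{1,\nabla'+\alpha'}\psi$ with $\psi=u_\alpha^{-1}\dot u_0$. Your extra checks (both sides are tangent to the slice, and $t\mapsto u_{\alpha(t)}$ is smooth in Sobolev spaces via the implicit function theorem) fill in details the paper leaves implicit.
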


Thus
\begin{equation}
\begin{aligned}
\bigl\langle \tau_*\beta_1,\tau_*\beta_2\bigr\rangle^\mathrm{h}_{\nabla'+\alpha'}
&=
\int_M
\bigl(
(\tau_*\beta_1)^\mathrm{h},
(\tau_*\beta_2)^\mathrm{h}
\bigr)_{\nabla'+\alpha'}
\,\frac{\omega^n}{n!} =
\int_M
\bigl(
u_\alpha(\beta_1^\mathrm{h}),
u_\alpha(\beta_2^\mathrm{h})
\bigr)_{u_\alpha\cdot (\nabla+\alpha)}
\,\frac{\omega^n}{n!} \\[2mm]
&=
\int_M
\bigl(
\beta_1^\mathrm{h},
\beta_2^\mathrm{h}
\bigr)_{\nabla+\alpha}
\,\frac{\omega^n}{n!} =
\bigl\langle \beta_1,\beta_2\bigr\rangle^\mathrm{h}_{\nabla+\alpha}.
\end{aligned}
\end{equation}

Thus it is invariant under changes of slice. Here \(\langle\cdot,\cdot\rangle_{\nabla+\alpha}\) indicates that the pairing is
defined using $h^{\mathrm{n}}_{\alpha}$. Consequently, the local
pairings glue to a well-defined Hermitian metric on
\(\widehat{\mathcal M}^s\) with respect to the complex structure induced by $I$.

\begin{remark}
The metric constructed above should be regarded as a Hermitian metric which is
not expected, in general, to be Kähler.
There is no a priori reason for its associated fundamental form to be closed. 
Indeed, this point can be seen by comparing the construction with the proof of
\cite[Proposition 4.2]{Itoh2}. In that proof, the \(L^2\)-inner product is defined using a
single fixed background metric throughout the local model. This fixedness is a
key reason why the corresponding fundamental form is closed. By contrast, in
the present construction the harmonic metric entering the definition varies with
the base point of the slice. Consequently, when one differentiates the associated
fundamental form, the first variation of this harmonic metric produces additional
terms. There is no apparent reason for these extra terms to cancel in general.
\end{remark}

\section{Almost hypercomplex structure on the moduli space of NHYM connections}

In this section, we construct a quaternionic-type triple of operators $\mathcal I_{\alpha},\mathcal J_{\alpha},\mathcal K_{\alpha}$ on the NHYM moduli space. More precisely, $\mathcal  I_{\alpha}$ defines a genuine complex structure, whereas $\mathcal J_{\alpha}$ and $\mathcal K_{\alpha}$ are only almost complex structures and are not asserted to be integrable. Our emphasis will not be on the complex structure $\mathcal  I_{\alpha}$ alone. We will treat the triple $\mathcal I_{\alpha},\mathcal J_{\alpha},\mathcal K_{\alpha}$ simultaneously.

We study almost complex structures on the moduli space in the same way as the above section: namely, we first define $\mathcal I_{\alpha},\mathcal J_{\alpha},\mathcal K_{\alpha}$ on each local slice and then check that they are compatible.

Our strategy will be to construct the almost complex structures $\mathcal I_{\alpha},\mathcal J_{\alpha},\mathcal K_{\alpha}:T_\alpha S_{\nabla,\varepsilon}
\longrightarrow
T_\alpha S_{\nabla,\varepsilon}$, as the
composition 
\begin{equation}\mathcal L_{\alpha}:T_\alpha S_{\nabla,\varepsilon}
\xrightarrow{\;Q_\alpha\;}
\mathcal H_\alpha^1
\xrightarrow{\;\tilde{L}_{\alpha}^{H}\;}
\mathcal H_\alpha^1
\xrightarrow{\;Q_\alpha^{-1}\;}
T_\alpha S_{\nabla,\varepsilon},\ \mathcal L_{\alpha} \in \{\mathcal I_{\alpha},\mathcal J_{\alpha},\mathcal K_{\alpha}\}
\end{equation}

 Following Hitchin’s hyperkähler construction, we introduce three formal candidate complex structure operators.
 For every Hermitian metric $h_\alpha$, define real-linear maps \(I_{\alpha},J_{\alpha},K_{\alpha}:\Omega^1(\End E) \to \Omega^1(\End E)\)  by
\begin{equation}
I_{\alpha}(a,b)=(ia,ib),\
J_{\alpha}(a,b)=\bigl(i b^{*,h_\alpha},-i a^{*,h_\alpha}\bigr),\
K_{\alpha}(a,b)=\bigl(-b^{*,h_\alpha},a^{*,h_\alpha}\bigr),
\end{equation}
where we use the decomposition $\Omega^1({\End E})=\Omega^{0,1}(\End E)\oplus \Omega^{1,0}(\End E)$.

It follows by a straightforward argument that \(I_\alpha^2=J_\alpha^2=K_\alpha^2=-\Id\), $I_\alpha J_\alpha=K_\alpha$ and they satisfy the quaternionic identities. These operators are equivariant with respect to the
gauge transformation \(u_\alpha\). We shall also need the following proposition.

\begin{proposition}Let $g_\alpha=\operatorname{Re}h_\alpha$ be the corresponding Riemannian metric, then $
g_\alpha(L_\alpha s,$ \\$L_\alpha t)$ $=g_\alpha(s,t),\ 
L_\alpha\in\{I_\alpha,J_\alpha,K_\alpha\}.$
\end{proposition}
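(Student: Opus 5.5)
The plan is to check the identity pointwise on $M$ and then integrate. The metric $g_\alpha$ is the real part of the $L^2$ pairing built from the Kähler metric and the fiber metric $\langle A,B\rangle_{h_\alpha}=\operatorname{tr}(B^{*,h_\alpha}A)$. It is therefore enough to show that each $L_\alpha$ preserves the real part of the pointwise pairing $\langle\cdot,\cdot\rangle_{\End E,h_\alpha}$ on $\Omega^1(\End E)$. For $I_\alpha$ this is immediate: multiplying both arguments by $i$ leaves the Hermitian pairing unchanged, since $i\bar i=1$.

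For $J_\alpha$ and $K_\alpha$, write $s=(a,b)$ and $t=(a',b')$ with $a,a'\in\Omega^{0,1}$ and $b,b'\in\Omega^{1,0}$. The type decomposition $\Omega^1=\Omega^{0,1}\oplus\Omega^{1,0}$ is orthogonal for the Kähler pairing, so
\[
\langle s,t\rangle=\langle a,a'\rangle+\langle b,b'\rangle .
\]
The map $a\mapsto a^{*,h_\alpha}$, which is $\bar\eta\otimes A^{*,h_\alpha}$ on decomposables, sends $\Omega^{0,1}$ to $\Omega^{1,0}$ and vice versa. The key lemma is
\[
\langle a^{*},a'^{*}\rangle=\overline{\langle a,a'\rangle}.
\]
I would check it on decomposables. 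The form part gives $\langle\bar\eta,\bar\eta'\rangle=\overline{\langle\eta,\eta'\rangle}$, because the Kähler metric is real. The endomorphism part gives
\[
\operatorname{tr}\bigl(A'A^{*}\bigr)=\operatorname{tr}\bigl(A^{*}A'\bigr)=\overline{\operatorname{tr}\bigl(A'^{*}A\bigr)},
\]
using cyclicity of the trace and $\operatorname{tr}(X^*)=\overline{\operatorname{tr}X}$ for the $h_\alpha$-adjoint. The lemma then extends by sesquilinearity.

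With the lemma in hand,
\[
\langle J_\alpha s,J_\alpha t\rangle=\langle ib^{*},ib'^{*}\rangle+\langle -ia^{*},-ia'^{*}\rangle=\overline{\langle b,b'\rangle}+\overline{\langle a,a'\rangle}=\overline{\langle s,t\rangle}.
\]
Taking real parts gives $g_\alpha(J_\alpha s,J_\alpha t)=g_\alpha(s,t)$. The same computation with signs $(-1,1)$ in place of $(i,-i)$ handles $K_\alpha$. Alternatively, $K_\alpha=I_\alpha J_\alpha$ is a composition of $g_\alpha$-isometries. Integrating over $M$ against $\omega^n/n!$ finishes the proof.

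The main obstacle is the conjugation lemma. The pairing is complex-linear in the first slot (per the paper's convention), while $J_\alpha$ is only real-linear: it is conjugate-linear, since it anticommutes with $I_\alpha$. So $J_\alpha$ does not preserve the Hermitian pairing itself, only up to complex conjugation. This is exactly why the statement is phrased for the real part $g_\alpha$. The conventions must be tracked carefully: $(\eta\otimes A)^{*}=\bar\eta\otimes A^{*}$, and conjugation maps $(0,1)$-forms isometrically, up to conjugation, onto $(1,0)$-forms.
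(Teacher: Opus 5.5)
Your proof is correct and follows essentially the same route as the paper. Both arguments reduce to $J_\alpha$ (handling $K_\alpha$ via $K_\alpha=I_\alpha J_\alpha$ or directly), split along $\Omega^{0,1}\oplus\Omega^{1,0}$, apply the conjugation identity $\bigl(\eta^{*,h_\alpha},\theta^{*,h_\alpha}\bigr)_{h_\alpha}=\overline{(\eta,\theta)_{h_\alpha}}$, and take real parts. The one difference is that you verify this identity on decomposables via cyclicity of the trace, whereas the paper simply invokes it.
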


\begin{proof}
Let $s=(a,b),t=(c,d)\in \Omega^1(\End E).$ It is immediate that \(I_\alpha\) preserves \(g_\alpha\). Since
\(K_\alpha=I_\alpha J_\alpha\), it remains to prove the assertion for
\(J_\alpha\).

We use the identity $
\bigl(\eta^{*,{h_\alpha}},\theta^{*,{h_\alpha}}\bigr)_{h_\alpha}
=
\overline{(\eta,\theta)_{h_\alpha}}
$, then
\begin{equation}
\begin{aligned}
g_\alpha(J_\alpha s,J_\alpha t)
&=
\operatorname{Re}\left[
\bigl(
(i b^{*,h_\alpha},-i a^{*,h_\alpha}),
(i d^{*,h_\alpha},-i c^{*,h_\alpha})
\bigr)_{h_\alpha}\right]\\
&=
\operatorname{Re}
\left[
(b^{*,h_\alpha},d^{*,h_\alpha})_{h_\alpha}
+
(a^{*,h_\alpha},c^{*,h_\alpha})_{h_\alpha}
\right]\\
&=
\operatorname{Re}
\left[
\overline{(b,d)_{h_\alpha}}
+
\overline{(a,c)_{h_\alpha}}
\right]\\
&=
g_\alpha(s,t).
\end{aligned}
\end{equation}
\end{proof}

We begin with the almost complex structure on $\mathcal H^1_\alpha$. Take \(I^H_{\alpha}=I_{\alpha}\big|_{\mathcal H^1_\alpha},J^H_{\alpha}=J_{\alpha}\big|_{\mathcal H^1_\alpha}$, $K^H_{\alpha}=K_{\alpha}\big|_{\mathcal H^1_\alpha}\). In general, they do not satisfy the quaternionic relations. Nevertheless, since $I$ preserves harmonic forms, they still satisfy some identities. For instance,
$
I_\alpha J_\alpha^H=K_\alpha^H, J^H_\alpha I^H_\alpha=-I_\alpha J^H_\alpha
$
and similarly for the other cyclic relations. In particular, when \(\alpha=0\), the three operators are defined on $T_\nabla S_{\nabla,\varepsilon}^{h_\nabla}\cong \mathcal H^1_\nabla$.

\begin{proposition}
 \(I^H_{\nabla}\) preserves the tangent space, i.e. 
$
I^H_{\nabla}\bigl(T_\nabla S_{\nabla,\varepsilon}\bigr)
\subset
T_\nabla S_{\nabla,\varepsilon}.
$
If, in addition, \(\nabla\) is unitary with respect to \(h_\nabla\), then \(J^H_\nabla\) and \(K^H_\nabla\) also preserve the tangent space, i.e.
$
J^H_\nabla\bigl(T_\nabla S_{\nabla,\varepsilon}\bigr)
\subset
T_\nabla S_{\nabla,\varepsilon},
K^H_\nabla\bigl(T_\nabla S_{\nabla,\varepsilon}\bigr)
\subset
T_\nabla S_{\nabla,\varepsilon}.
\ 
$
\end{proposition}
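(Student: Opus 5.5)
The plan is to verify the two defining conditions of $T_\nabla S_{\nabla,\varepsilon}=\ker d_{1,\nabla}^{*}\cap\ker d_{2,\nabla}$ directly, in the splitting $\beta=(a,b)$ with $a\in\Omega^{0,1}(\End E)$ and $b\in\Omega^{1,0}(\End E)$. First I rewrite both conditions in this splitting. Writing $\nabla=\nabla'+\nabla''$, we have $(\nabla\beta)^{0,2}=\nabla''a$, $(\nabla\beta)^{2,0}=\nabla'b$ and $\Lambda\nabla\beta=\Lambda(\nabla'a+\nabla''b)$. For the gauge condition I use the Kähler identities on $\End E$-valued forms, taken with respect to $h=h_\nabla$:
\[
(\nabla'')^{*,h}a=\pm i\Lambda(\nabla'^{\dagger_h}a),\qquad (\nabla')^{*,h}b=\mp i\Lambda(\nabla''^{\dagger_h}b).
\]
Here the dagger connection appears because the adjoints are taken with respect to $h$ while $\nabla$ is not assumed unitary.

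For $I^H_\nabla$ nothing beyond linearity is needed. The operators $d_{2,\nabla}$ and $d_{1,\nabla}^{*,h}$ are complex-linear differential operators, and $I(a,b)=(ia,ib)=i\beta$. Hence $I$ maps $\ker d_{1,\nabla}^{*}\cap\ker d_{2,\nabla}$ into itself for every $\nabla$, unitary or not.

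For $J^H_\nabla$ I assume $\nabla=\nabla_h$, i.e. $\psi^\nabla_h=0$, so that $\nabla^{\dagger_h}=\nabla$. The key point is that $\nabla$ then commutes with the adjoint $^{*,h}$ on $\End E$-valued forms, up to the graded sign: $\nabla(\gamma^{*})=\pm(\nabla\gamma)^{*}$. Moreover, $^{*}$ conjugates types, so $\nabla''(b^{*})=\pm(\nabla'b)^{*}$ and $\nabla'(a^{*})=\pm(\nabla''a)^{*}$. Since $J\beta=(ib^{*},-ia^{*})$, the $(0,2)$- and $(2,0)$-components of $\nabla(J\beta)$ are adjoints of $\nabla'b=0$ and $\nabla''a=0$, so they vanish.

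The remaining two scalar conditions get mixed. Since $\Lambda$ is a real operator, $\Lambda(\gamma^{*})=(\Lambda\gamma)^{*}$ up to sign, and therefore
\[
\Lambda\nabla(J\beta)\sim\bigl(\Lambda\nabla''b-\Lambda\nabla'a\bigr)^{*}.
\]
By the Kähler identities above, with $\nabla^\dagger=\nabla$, the bracket is (up to a constant) exactly $d_{1,\nabla}^{*,h}\beta$, which vanishes. Symmetrically, $d_{1,\nabla}^{*,h}(J\beta)$ is, up to a constant, the adjoint of $\Lambda(\nabla'a+\nabla''b)=\Lambda\nabla\beta=0$. In other words, $J$ interchanges the moment-map condition $\Lambda\nabla\beta=0$ with the Coulomb gauge condition $\nabla^{*}\beta=0$. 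This is the finite-dimensional shadow of Hitchin's picture, and it is where unitarity is used essentially.

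Finally, $K^H_\nabla=I_\nabla J^H_\nabla$, which was noted before the statement, so $K^H_\nabla$ preserves the tangent space by the two previous steps. The main obstacle is bookkeeping of signs: the graded sign in $\nabla(\gamma^*)=\pm(\nabla\gamma)^*$, the sign in $\Lambda(\gamma^{*})$ for $(1,1)$-forms, and the signs in the Kähler identities. They must combine so that the mixed expression is precisely $\Lambda(\nabla'a-\nabla''b)$ rather than $\Lambda(\nabla'a+\nabla''b)$. I would fix conventions first in a local unitary frame with $\omega=\tfrac{i}{2}\sum dz_j\wedge d\bar z_j$, and check both identities on a flat model before invoking them globally.
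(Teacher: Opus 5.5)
Your proposal is correct and follows essentially the paper's own argument. Complex-linearity handles $I^H_\nabla$. For $J^H_\nabla\beta=(ib^{*},-ia^{*})$, unitarity gives $(\nabla'a)^{*}=\nabla''(a^{*})$ and $(\nabla''b)^{*}=\nabla'(b^{*})$, and together with the Kähler identities $(\nabla'')^{*}a=-i\Lambda\nabla'a$ and $(\nabla')^{*}b=i\Lambda\nabla''b$ this shows that $J$ swaps the conditions $\Lambda\nabla\beta=0$ and $d_{1,\nabla}^{*}\beta=0$; finally $K^H_\nabla=I_\nabla J^H_\nabla$.

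The sign bookkeeping you flag as the main obstacle is harmless. With the convention $(\eta\otimes A)^{*}=\bar\eta\otimes A^{*}$ there is no graded sign. Moreover, in the unitary case the two scalar conditions $\Lambda(\nabla'a+\nabla''b)=0$ and $d_{1,\nabla}^{*}\beta=i\Lambda(\nabla''b-\nabla'a)=0$ together force $\Lambda\nabla'a=\Lambda\nabla''b=0$ separately. So each condition on $J\beta$ is the adjoint of a vanishing quantity, whatever the signs.
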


\begin{proof}
For notational convenience, we suppress the subscript \(h_\nabla\) from the
fiberwise adjoint.
Let \(\beta=(a,b)\in T_\nabla S_{\nabla,\varepsilon}=
\ker d_{1,\nabla}^{*}\cap \ker d_{2,\nabla}\), then
\begin{equation}
\nabla' b=0,\
\nabla''a=0,\
\Lambda(\nabla'a+\nabla''b)=0,
\
d_{1,\nabla}^{*}(a,b)=0.
\end{equation}

Since both
\(d_{1,\nabla}^{*,h_\nabla}\) and \(d_{2,\nabla}\) are complex-linear, we have
$
I^H_\nabla\beta\in T_\nabla S_{\nabla,\varepsilon}.
$

Since \(\nabla\) is unitary with respect to \(h_\nabla\), we have
$
(\nabla'a)^*=\nabla''(a^*),
(\nabla''b)^*=\nabla'(b^*),
(\nabla''a)^*$ $=\nabla'(a^*),
(\nabla'b)^*=\nabla''(b^*),
$
and the Kähler identities give
$
(\nabla'')^*a=-i\Lambda\nabla'a,
(\nabla')^*b=i\Lambda\nabla''b.
$
Therefore
\begin{equation}
0=d_{1,\nabla}^{*,h_\nabla}(a,b)=(\nabla'')^{*,h_\nabla} a +(\nabla')^{*,h_\nabla}b=i\Lambda\nabla''b-i\Lambda\nabla'a.
\end{equation}

Set
$
J^H_\nabla\beta=(a_J,b_J)=(i b^*,-i a^*).
$
Since
\(\nabla''a=0\) and \(\nabla'b=0\), we compute,
\begin{equation}
\nabla' b_J
=
-i\nabla'(a^*)
=
-i(\nabla''a)^*
=
0,\
\nabla'' a_J
=
i\nabla''(b^*)
=
i(\nabla'b)^*
=
0.\end{equation}
\begin{equation}
\begin{aligned}
\Lambda(\nabla'a_J+\nabla''b_J)
&=
\Lambda\bigl(i\nabla'(b^*)-i\nabla''(a^*)\bigr)\\
&=
i\Lambda\bigl((\nabla''b)^*-(\nabla'a)^*\bigr)\\
&=
i\bigl(\Lambda(\nabla''b-\nabla'a)\bigr)^*=0.
\end{aligned}
\end{equation}
\begin{equation}
\begin{aligned}
d_{1,\nabla}^{*}(J^H_\nabla\beta)
&=
(\nabla'')^*(i b^*)+(\nabla')^*(-i a^*)\\
&=
\Lambda\nabla'(b^*)+\Lambda\nabla''(a^*)\\
&=
\Lambda\bigl((\nabla''b)^*+(\nabla'a)^*\bigr)\\
&=
\bigl(\Lambda(\nabla''b+\nabla'a)\bigr)^*=0.
\end{aligned}
\end{equation} 

Hence
$
J^H_\nabla\beta\in T_\nabla S_{\nabla,\varepsilon}.
$

Finally, since \(K^H_\nabla=I^H_\nabla J^H_\nabla\), and both
\(I^H_\nabla\) and \(J^H_\nabla\) preserve
\(T_\nabla S_{\nabla,\varepsilon}\) under the Hermitian assumption, the
same is true for \(K^H_\nabla\). This proves the proposition.
\end{proof}

In view of the preceding proposition, we shall henceforth restrict our attention to slices centered at the Hermitian locus, namely slices $S_{\nabla,\varepsilon} $ with \(\nabla\) Hermitian.

Thus, we are led to study almost complex structures on the open subset \begin{equation}\mathcal U
=
\bigcup
p\bigl(S_{\nabla,\varepsilon_\nabla}\bigr)\subset \mathcal M^s,\end{equation}
where the union is taken over all $(0,1)$-stable HYM connections satisfying $H^2(\widetilde {\mathcal C}_\nabla)=0$.

The transition maps on \(\mathcal U\) considered here satisfy the following properties.
\begin{lemma}
Let \(\nabla\) and \(\nabla'\) be HYM connections, and consider the
Hermitian-centered slices
$
S_{\nabla,\varepsilon}^{h_\nabla}$ and $
S_{\nabla',\varepsilon'}^{h_{\nabla'}}.
$
On the non-empty overlap
$
U_{\nabla\nabla'}
=
S_{\nabla,\varepsilon}^{h_\nabla}
\cap
p^{-1}\bigl(p(S_{\nabla',\varepsilon'}^{h_{\nabla'}})\bigr),
$
there exists $V \subset U_{\nabla\nabla'}$ and a smooth map
$
u:V \to G,\ 
\alpha\mapsto u_\alpha,
$
such that
$
u_\alpha\cdot(\nabla+\alpha)\in S_{\nabla',\varepsilon'}^{h_{\nabla'}}
$. 
Thus the transition map is
$
\tau_{\nabla\nabla'}(\nabla+\alpha)
=
u_\alpha\cdot(\nabla+\alpha).
$
\end{lemma}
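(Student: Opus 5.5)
The plan is to repeat the implicit-function-theorem argument of the earlier transition-map lemma, now with both slices centered at HYM connections. Pick a point of the overlap, $\alpha_0\in U_{\nabla\nabla'}$. Since $p(\nabla+\alpha_0)\in p(S_{\nabla',\varepsilon'}^{h_{\nabla'}})$, there exist $\alpha_0'\in S_{\nabla',\varepsilon'}^{h_{\nabla'}}$ and $v\in G$ with $v\cdot(\nabla+\alpha_0)=\nabla'+\alpha_0'$. We first replace $\nabla+\alpha$ by $v\cdot(\nabla+\alpha)$; the action $\alpha\mapsto v\cdot(\nabla+\alpha)$ is smooth and affine in $\alpha$. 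After this, it suffices to find, for connections $A$ near $\nabla'+\alpha_0'$, a gauge transformation $w=\exp(\xi)$ close to the identity with $w\cdot A\in S_{\nabla',\varepsilon'}^{h_{\nabla'}}$. We then set $u_\alpha=w(\alpha)v$.

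To construct $w$, define
\[
\Psi(\alpha,\xi)=d_{1,\nabla'}^{*,h_{\nabla'}}\bigl(\exp(\xi)\cdot(v\cdot(\nabla+\alpha))-\nabla'\bigr),
\qquad \Psi: W^{k,2}\times U_{k+1}\to U_{k-1}.
\]
Here $U_k$ is the trace-normalized space from the previous lemma. The trace part of $\xi$ acts trivially, and $d_{1,\nabla'}^{*}$ of anything has zero trace integral, so the target is correct. The linearization is
\[
D_\xi\Psi\big|_{(\alpha_0,0)}\zeta=d_{1,\nabla'}^{*}\,d_{1,\nabla'+\alpha_0'}\zeta
=d_{1,\nabla'}^{*}d_{1,\nabla'}\zeta+d_{1,\nabla'}^{*}[\alpha_0',\zeta].
\]
This is a perturbation of the Laplacian $d_{1,\nabla'}^*d_{1,\nabla'}$ by a term of size $O(|\alpha_0'|_k)$. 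Because $\nabla'$ is $(0,1)$-stable, it is simple, and so $\ker d_{1,\nabla'}=\mathbb C\Id$. Therefore $d_{1,\nabla'}^*d_{1,\nabla'}:U_{k+1}\to U_{k-1}$ is an isomorphism. For $\varepsilon'$ small, the perturbed operator remains an isomorphism, since invertibility is an open condition.

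The Banach implicit function theorem then gives a smooth map $\xi(\alpha)$ on a neighborhood $V$ of $\alpha_0$, with $\xi(\alpha_0)=0$ and $\Psi(\alpha,\xi(\alpha))=0$. Set $u_\alpha=\exp(\xi(\alpha))v$. The gauge condition $d_{1,\nabla'}^*(u_\alpha\cdot(\nabla+\alpha)-\nabla')=0$ holds by construction. The NHYM equations $(N)^{2,0}=(N)^{0,2}=\Lambda N=0$ are gauge invariant, because the curvature transforms by conjugation. For $\alpha$ near $\alpha_0$, the $W^{k,2}$-norm of $u_\alpha\cdot(\nabla+\alpha)-\nabla'$ stays below $\varepsilon'$ by continuity. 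Hence $u_\alpha\cdot(\nabla+\alpha)\in S_{\nabla',\varepsilon'}^{h_{\nabla'}}$. Smoothness of $u$ follows from elliptic regularity, as in the previous lemma.

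The main obstacle is the invertibility of the linearization at a point $\alpha_0'\neq0$ of the target slice. The earlier lemma only covered the case where the base point is the slice center. I would handle this by the perturbation argument above, possibly shrinking $\varepsilon'$, and this is why the statement only asserts existence on some $V\subset U_{\nabla\nabla'}$. If one wants $V$ to be the whole overlap, a fallback is to cover the overlap by such neighborhoods. On intersections of these neighborhoods the maps agree modulo the center of $G$, by uniqueness in the implicit function theorem together with stability, that is, trivial stabilizers up to scalars.
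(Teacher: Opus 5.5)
Your proposal is correct and follows the paper's route: the paper obtains $u_\alpha$ by the same implicit-function-theorem argument on $\Psi(\alpha,\xi)=d_{1,\nabla'}^{*,h_{\nabla'}}\bigl(\exp(\xi)\cdot(\nabla+\alpha)-\nabla'\bigr)$ over the trace-normalized spaces $U_k$, with the linearization invertible by simplicity (this is the earlier transition-map lemma, which linearizes only at the slice center). Your preliminary gauge transformation $v$ and the perturbation argument for $d_{1,\nabla'}^{*}d_{1,\nabla'+\alpha_0'}$ handle a general overlap point, a case the paper glosses over; two cosmetic fixes are needed: $a\cdot(b\cdot\nabla)=(ba)\cdot\nabla$ is a right action, so the composite is $u_\alpha=v\exp(\xi(\alpha))$ rather than $\exp(\xi(\alpha))v$, and only constant scalars act trivially (a non-constant $e^{f}\Id$ shifts the connection by $df\,\Id$), which is exactly the direction that the normalization $\int_M\operatorname{tr}\xi\,\frac{\omega^n}{n!}=0$ removes.
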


Thus far, we have constructed three almost complex structures on the tangent space at the base point $\nabla$. These structures, however, do not admit a natural extension to the tangent spaces at points of the form $\nabla+\alpha$.
The idea is to use the harmonic space as an intermediate model and define the corrected candidate almost complex structure by conjugation through $Q_\alpha=\mathbb H_\alpha|_{T_\alpha S_{\nabla,\varepsilon}}$, where $
\mathbb H_\alpha:\Omega^1(\End E)\longrightarrow \mathcal H_{\alpha}^1
$
denotes the harmonic projection, namely:
\begin{equation}\label{cs}
\mathcal L_{\alpha}:T_\alpha S_{\nabla,\varepsilon}
\xrightarrow{\;Q_\alpha\;}
\mathcal H_\alpha^1
\xrightarrow{\;\tilde{L}_{\alpha}^{H}\;}
\mathcal H_\alpha^1
\xrightarrow{\;Q_\alpha^{-1}\;}
T_\alpha S_{\nabla,\varepsilon},\ 
\mathcal L_\alpha\in\{\mathcal I_\alpha,\mathcal J_\alpha,\mathcal K_\alpha\}.
\end{equation}

The use of the harmonic projection and of \(Q_\alpha^{-1}\) generally destroys integrability. Thus the construction does not imply the integrability of \(\mathcal J_\alpha\) and \(\mathcal K_\alpha\).
We now explain how each of the maps in this construction is obtained. The next two propositions describe the operators \(Q_\alpha\) and \(\tilde{L}_{\alpha}^{H}\), respectively.

\begin{proposition}
$Q_\alpha=\mathbb H_\alpha|_{T_\alpha S_{\nabla,\varepsilon}}:T_\alpha S_{\nabla,\varepsilon} \to \mathcal H_{\alpha}^1$ is invertible for small $\alpha$.
\end{proposition}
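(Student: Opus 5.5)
We need to show that $Q_\alpha$ is invertible, where $T_\alpha S_{\nabla,\varepsilon}=\ker d^{*}_{1,\nabla}\cap\ker d_{2,\alpha}$ and $\mathcal H^1_\alpha=\ker d^{*}_{1,\alpha}\cap\ker d_{2,\alpha}$. The adjoints are taken with respect to $h_\nabla^{\mathrm n}$ and $h_\alpha^{\mathrm n}$ respectively. The plan is to compare both spaces to $\mathcal H^1_\nabla$ and argue by perturbation.

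\textbf{Step 1: $Q_\alpha$ lands in $\mathcal H^1_\alpha$.} Because $[\nabla+\alpha]\in\widehat{\mathcal M}^s$ is simple and unobstructed, we have the splitting $\ker d_{2,\alpha}=\operatorname{Im} d_{1,\alpha}\oplus \mathcal H^1_\alpha$. This uses $d_{2,\alpha}d_{1,\alpha}=0$, which holds because $\nabla+\alpha$ is NHYM. It follows that for $\beta\in T_\alpha S_{\nabla,\varepsilon}$ we can write $\beta=d_{1,\alpha}\xi+\mathbb H_\alpha\beta$.

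\textbf{Step 2: equal dimensions.} Both spaces are finite dimensional with dimension $\dim H^1(\mathcal C_{\nabla+\alpha})$. On one side, $\mathcal H^1_\alpha\cong H^1(\mathcal C_{\nabla+\alpha})$ by Hodge theory for the elliptic complex $\mathcal C_{\nabla+\alpha}$. On the other side, $T_\alpha S_{\nabla,\varepsilon}\cong T_{[\nabla+\alpha]}\mathcal M^s\cong H^1(\mathcal C_{\nabla+\alpha})$ via the slice homeomorphism $p$ and the smoothness criterion on $\widehat{\mathcal M}^s$. Alternatively, the index of the rolled-up operator $d^*_{1,\nabla}\oplus d_{2,\alpha}$ restricted to $\ker d_{2,\alpha}$ is locally constant, and $H^2=0$ keeps the dimensions constant for small $\alpha$. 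So it suffices to prove that $Q_\alpha$ is injective.

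\textbf{Step 3: injectivity, the main step.} Suppose $\beta\in T_\alpha S_{\nabla,\varepsilon}$ and $\mathbb H_\alpha\beta=0$. Then $\beta=d_{1,\alpha}\xi$, where we may take $\xi$ in the complement of $\ker d_{1,\alpha}=\mathbb C\Id$. Applying the slice condition gives
\[
0=d^*_{1,\nabla}d_{1,\alpha}\xi=d^*_{1,\nabla}d_{1,\nabla}\xi+d^*_{1,\nabla}[\alpha,\xi].
\]
At $\alpha=0$ the operator $d^*_{1,\nabla}d_{1,\nabla}$ is an isomorphism $U_{k+1}\to U_{k-1}$ on trace-normalized sections, by simplicity of $\nabla$ (as in the transition-map lemma). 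The perturbation satisfies $|d^*_{1,\nabla}[\alpha,\cdot]|_{k-1}\le C|\alpha|_k|\cdot|_{k+1}$ by the Sobolev multiplication theorem for large $k$. Hence for $|\alpha|_k<\varepsilon$ small, $d^*_{1,\nabla}d_{1,\alpha}$ is still invertible on $U_{k+1}$. This forces $\xi=0$, so $\beta=0$.

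One point needs care in this step: the trace part of $\xi$. Since $d_{1,\alpha}(c\Id)=0$, the trace part does not affect $\beta$, so restricting to $U_{k+1}$ loses nothing. The harmonic metric $h_\alpha^{\mathrm n}$ enters only through $\mathbb H_\alpha$, and the argument above never uses it. Elliptic regularity then upgrades the conclusion from Sobolev to smooth sections. Injectivity together with Step 2 gives that $Q_\alpha$ is invertible for small $\alpha$.

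If the dimension count in Step 2 is delicate, there is a fallback: the symmetric statement $\mathcal H^1_\alpha\to T_\alpha S_{\nabla,\varepsilon}$, $\gamma\mapsto\gamma-d_{1,\alpha}\xi$, with $\xi$ solving $d^*_{1,\nabla}d_{1,\alpha}\xi=d^*_{1,\nabla}\gamma$. This is solvable by the same perturbed invertibility, and it provides an explicit inverse of $Q_\alpha$, since $\mathbb H_\alpha(\gamma-d_{1,\alpha}\xi)=\gamma$.
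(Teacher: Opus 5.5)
Your argument is correct provided you take the fallback as the surjectivity proof, and it is then essentially the paper's proof. The paper uses the same operator $B_\alpha=d^{*}_{1,\nabla}d_{1,\alpha}$ restricted to $\Omega^0_\perp=(\mathbb C\Id)^\perp$, which is your $U_{k+1}$, and gets its invertibility for small $\alpha$ by openness of invertible operators starting from $\alpha=0$. It proves injectivity exactly as in your Step~3. It proves surjectivity by setting $\beta=\gamma+d_{1,\alpha}\xi$ with $\xi=-\widetilde B_\alpha^{-1}d^{*}_{1,\nabla}\gamma$. That step needs $d^{*}_{1,\nabla}\gamma\perp\mathbb C\Id$, which holds because it lies in the image of $d^{*}_{1,\nabla}$; you should state this explicitly, since solvability in $U_{k-1}$ depends on it.

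I would not rely on Step~2, for two reasons. First, identifying $\ker d^{*}_{1,\nabla}\cap\ker d_{2,\alpha}$ with $T_{[\nabla+\alpha]}\mathcal M^s$ presupposes that near $\alpha$ the slice is a submanifold whose tangent space is this full linearized kernel. That is a transversality statement at $\alpha$, not yet available and of the same depth as the proposition; the homeomorphism $p$ by itself gives no linear identification. Second, the index heuristic fails as stated. The domain $\ker d_{2,\alpha}$ moves with $\alpha$. Moreover, for $n=\dim_{\mathbb C}M\ge 3$ the operator $d^{*}_{1,\nabla}\oplus d_{2,\alpha}$ has injective but non-surjective symbol: the fibre rank of its target exceeds that of $\Omega^1(\End E)$ by $(n-1)(n-2)\operatorname{rk}\End E$. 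It therefore has infinite-dimensional cokernel and no index.

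A minor point: the splitting in Step~1 needs only $d_{2,\alpha}d_{1,\alpha}=0$ and the Hodge decomposition $\Omega^1=\operatorname{Im}d_{1,\alpha}\oplus\ker d^{*}_{1,\alpha}$, not simplicity or unobstructedness.
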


\begin{proof}
$Q_\alpha$ is well-defined since every 
$
\beta\in T_\alpha S_{\nabla,\varepsilon}
\subset \ker d_{2,\alpha}
$
admits a decomposition
$\beta=\beta^{\mathrm{h}}+d_{1,\alpha}\xi,\ 
\beta^{\mathrm{h}}\in \mathcal H_{\alpha}^{1},
\xi\in \Omega^{0}(\End E).$

Given $\gamma \in \mathcal H^1_\alpha$, we need to find $\beta \in T_\alpha S_{\nabla,\varepsilon}=\ker d^*_{1,\nabla} \cap \ker d_{2,\alpha}$ such that $Q_\alpha(\beta)=\gamma$. Clearly, $\beta$  has  the form $\beta=\gamma+d_{1,\alpha}\xi$ and $d_{2,\alpha}\beta=d_{2,\alpha}\gamma+d_{2,\alpha}d_{1,\alpha}\xi=0$.

It remains only to impose the fixed slice condition
$
\nabla^{*,h_{\nabla}}\beta=0.
$
Substituting
$
\beta
$
into this condition, we obtain
\begin{equation}
\nabla^{*,h_{\nabla}}\bigl(\gamma+d_{1,\alpha}\xi\bigr)=0.
\end{equation}
Define $B_\alpha=d_{1,\nabla}^{*,h_{\nabla}}d_{1,\alpha}:\Omega^0(\End E)\to \Omega^0(\End E) $.
It remains to solve the equation
\[
B_\alpha\xi=-\nabla^{*,h_{\nabla}}\gamma.
\]
\textbf{Claim: }$\widetilde{B}_\alpha=B_\alpha|_{\Omega^{0}_{\perp}}:\Omega^{0}_{\perp}(\End E)\to \Omega^{0}_{\perp}(\End E)$ is an isomorphism for small $\alpha$ where $\Omega^{0}_{\perp}(\End E)=(\ker d_{1,\nabla})^{\perp}=(\mathbb C \Id)^{\perp}.$

Since $\widetilde{B}_0=d_{1,\nabla}^{*,h_{\nabla}}d_{1,\nabla}:\Omega^{0}_{\perp} \to \Omega^{0}_{\perp}$ is an isomorphism, for small $\alpha$, $\widetilde{B}_\alpha$ is also an isomorphism. This is the standard fact that the set of invertible bounded operators between Banach spaces is open in the operator norm topology. A point worth noting is that \(\Omega^0_{\perp}\) does not depend on the choice of the Hermitian metric since $\nabla$ here is simple.

Note that $\nabla^{*,h_{\nabla}}\gamma \in \Omega^{0}_{\perp}(\End E)$  since $(\nabla^{*,h_{\nabla}}\gamma,\eta)=(\gamma,\nabla \eta)=0 \textnormal{ for } \eta \in \ker d_{1,\nabla}.$
Thus $\xi=-\widetilde{B}^{-1}_\alpha \nabla^{*,h_{\nabla}}\gamma$ is well-defined.

To see the injectivity of $Q_\alpha$, suppose $Q_\alpha(\beta)=0$. There exists $\gamma \in \mathcal H^1_\alpha, \xi_{\perp} \in \Omega^0_{\perp}$ and $\xi_0 \in \ker \nabla =\mathbb C \Id$ such that $\beta=\gamma+d_{1,\alpha}\xi_{\perp}+d_{1,\alpha}\xi_0$. Then $0=Q_\alpha(\beta)=\gamma$ and clearly $d_{1,\alpha}\xi_0=0$, $0= d^{*,h_{\nabla}}_{1,\nabla}\beta=d^{*,h_{\nabla}}_{1,\nabla}d_{1,\alpha}\xi_{\perp}=\widetilde{B}_\alpha \xi_{\perp} \Rightarrow \xi_{\perp}=0\Rightarrow \beta=0$.

\end{proof}

\begin{proposition}

Set $
S_{\alpha}=\mathbb H_\alpha J^H_\alpha:\mathcal H_\alpha^1 \to \mathcal H_\alpha^1$, and $
R_{\alpha}=\bigl(-S_{\alpha}^2\bigr)^{1/2},$ 
then \(\tilde{J}^H_{\alpha}=S_{\alpha} R_{\alpha}^{-1}:\mathcal H_\alpha^1\to \mathcal H_\alpha^1\) is a well-defined almost complex structure on the underlying real vector space of
\(\mathcal H^1_\alpha\).
\end{proposition}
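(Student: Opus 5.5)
The plan is to treat $S_\alpha$ as a real-linear operator on the finite-dimensional real vector space $\mathcal H^1_\alpha$ and to show two things: that it is skew-adjoint with respect to the real inner product $g_\alpha=\operatorname{Re}(\cdot,\cdot)_{h_\alpha}$, and that it is invertible for $\alpha$ small. Once both are known, $\tilde J^H_\alpha=S_\alpha R_\alpha^{-1}$ is simply the orthogonal factor in the polar decomposition of $S_\alpha$, and it squares to $-\Id$.

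\emph{Step 1: skew-adjointness.} First, $J_\alpha$ is $g_\alpha$-orthogonal by the preceding proposition and satisfies $J_\alpha^2=-\Id$. Hence
\[
g_\alpha(J_\alpha s,t)=g_\alpha(J_\alpha^2 s,J_\alpha t)=-g_\alpha(s,J_\alpha t).
\]
Next, $\mathbb H_\alpha$ is the $L^2(h_\alpha)$-orthogonal projection onto $\mathcal H^1_\alpha$, because the splitting $\operatorname{Im}d_{1,\alpha}\oplus\ker d^{*}_{1,\alpha}$ is orthogonal for the pairing defined by $h_\alpha$. Therefore, for $s,t\in\mathcal H^1_\alpha$,
\[
g_\alpha(S_\alpha s,t)=g_\alpha(J_\alpha s,t)=-g_\alpha(s,J_\alpha t)=-g_\alpha(s,S_\alpha t).
\]
So $S_\alpha^{T}=-S_\alpha$, and $-S_\alpha^2=S_\alpha^{T}S_\alpha$ is $g_\alpha$-symmetric and positive semidefinite. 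It is positive definite exactly when $S_\alpha$ is injective. In that case $R_\alpha=(-S_\alpha^2)^{1/2}$ is the unique positive definite symmetric square root, and $R_\alpha$ is invertible.

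\emph{Step 2: invertibility for small $\alpha$ (the main point).} At $\alpha=0$ the centre $\nabla$ is Hermitian, so by the proposition on preservation of the tangent space, $J_\nabla$ maps $\mathcal H^1_\nabla\cong T_\nabla S_{\nabla,\varepsilon}$ to itself. Hence $S_0=J^H_\nabla$ and $S_0^2=-\Id$, so $S_0$ is invertible.

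To propagate this to small $\alpha$, I would transport everything to the fixed space $\mathcal H^1_\nabla$. On the unobstructed locus $\dim\mathcal H^1_\alpha$ is constant. The projection $\mathbb H_\alpha$ depends smoothly on $\alpha$, since $d_{1,\alpha}$, $d_{2,\alpha}$ and, by Lemma~\ref{lem4}(ii), $h_\alpha$ all vary smoothly. Consequently, for small $\alpha$ the map $\mathbb H_\alpha|_{\mathcal H^1_\nabla}$ is an isomorphism $\mathcal H^1_\nabla\to\mathcal H^1_\alpha$. Conjugating $S_\alpha$ by this isomorphism gives a continuous family of operators on $\mathcal H^1_\nabla$ equal to $S_0$ at $\alpha=0$. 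Since invertibility is an open condition, $S_\alpha$ is invertible for $\alpha$ in a neighbourhood of $0$; after shrinking $\varepsilon$ this holds on the whole slice. This is the step I expect to need the most care, namely making precise that the harmonic spaces form a smooth finite-rank bundle over the slice.

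\emph{Step 3: conclusion.} The operator $R_\alpha$ is a limit of polynomials in $S_\alpha^2$; concretely, diagonalise $-S_\alpha^2$ in a $g_\alpha$-orthonormal basis. Hence $R_\alpha$ commutes with $S_\alpha$, and
\[
(\tilde J^H_\alpha)^2=S_\alpha^2R_\alpha^{-2}=S_\alpha^2(-S_\alpha^2)^{-1}=-\Id .
\]
Thus $\tilde J^H_\alpha$ is an almost complex structure on the real vector space $\mathcal H^1_\alpha$. As a by-product of the polar decomposition it is also $g_\alpha$-orthogonal. Moreover, $\tilde J^H_\alpha$ depends smoothly on $\alpha$, since the positive square root is smooth on positive definite operators. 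Finally, at $\alpha=0$ we have $R_0=\Id$, so $\tilde J^H_\nabla=J^H_\nabla$.
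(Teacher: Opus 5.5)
Your proposal is correct and follows essentially the same route as the paper. Both arguments establish skew-adjointness of $S_\alpha$ for $g_\alpha$, then invertibility for small $\alpha$ by conjugating through $\Phi_\alpha=\mathbb H_\alpha|_{\mathcal H^1_\nabla}$ to a perturbation of $S_0=J^H_\nabla$, and finally use that $R_\alpha$ commutes with $S_\alpha$ to get $(\tilde J^H_\alpha)^2=-\Id$. One small correction: the orthogonality of $\mathbb H_\alpha$ used in Step 1 comes from the full Hodge decomposition $\operatorname{Im}d_{1,\alpha}\oplus\mathcal H^1_\alpha\oplus\operatorname{Im}d_{2,\alpha}^{*}$ (since $J_\alpha s$ need not lie in $\ker d_{2,\alpha}$), not from the two-term splitting alone; and tying the constancy of $\dim\mathcal H^1_\alpha$ to unobstructedness is a slightly more careful justification than the paper's appeal to Hodge theory alone.
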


\begin{proof}
The proposition follows from the following steps.
\begin{enumerate}[label=\textup{(\roman*)}]
\item
$S_\alpha=\mathbb H_\alpha J^H_\alpha:\mathcal H^1_\alpha \to\mathcal H^1_\alpha$ is well-defined.

\item $S_\alpha$ is skew-adjoint, i.e. $S_{\alpha}^{*,g_\alpha^{\mathrm{h}}}=-S_\alpha$. 

Indeed, for $\beta_1,\beta_2 \in \mathcal H^1_\alpha$, $\langle S_\alpha \beta_1,\beta_2 \rangle_{g_\alpha}^\mathrm{h} =\langle\mathbb H_\alpha J^H_\alpha \beta_1,\beta_2 \rangle_{g_\alpha}^\mathrm{h} =\langle J^H_\alpha \beta_1,\beta_2 \rangle_{g_\alpha}^\mathrm{h} =\langle \beta_1,-J^H_\alpha \beta_2 \rangle_{g_\alpha}^\mathrm{h} =\langle \beta_1,- \mathbb H_\alpha J^H_\alpha \beta_2 \rangle_{g_\alpha}^\mathrm{h} =-\langle \beta_1,S_\alpha \beta_2 \rangle_{g_\alpha}^\mathrm{h} .$

\item $-S^2_\alpha$ is positive definite self-adjoint. 

Obviously, $S^{*,g^\mathrm{h}_\alpha}_\alpha S=-S^2_\alpha$. $\langle -S^2_\alpha \beta,\beta\rangle_{g_\alpha}^\mathrm{h}=||S_\alpha \beta||^2 \geq 0$. If $-S^2_\alpha \beta=0$, then $S_\alpha \beta=0$. The following claim implies $\beta=0$.

\textbf{Claim: }$S_\alpha$ is still an isomorphism for $\alpha$ sufficiently small.

Let $\Phi_\alpha
=
\mathbb H_\alpha\big|_{\mathcal H_\nabla^1}:\mathcal H_\nabla^1
\longrightarrow
\mathcal H_\alpha^1$, we define \(\widetilde{S}_\alpha:\mathcal H_\nabla^1 \to
\mathcal H_\nabla^1\) by the composition
$\widetilde{S}_\alpha
=
\Phi_\alpha^{-1}\circ S_\alpha\circ \Phi_\alpha,
$
that is,
$
\mathcal H_\nabla^1
\xrightarrow{\;\Phi_\alpha\;}
\mathcal H_\alpha^1
\xrightarrow{\;S_\alpha\;}
\mathcal H_\alpha^1
\xrightarrow{\;\Phi_\alpha^{-1}\;}
\mathcal H_\nabla^1$.

Here we show
$
\Phi_\alpha
$
is an isomorphism for $\alpha$ sufficiently small. If $\Phi_\alpha(v)=0$, then $v=\mathbb H_\nabla\big|_{\mathcal H_\nabla^1}(v)-\Phi_\alpha(v)=(\mathbb H_\nabla\big|_{\mathcal H_\nabla^1}-\Phi_\alpha)v$. Comparing the norms of both sides gives $v=0$. Moreover, Hodge theory for elliptic complexes implies both \(\mathcal H^1_\nabla\) and \(\mathcal H^1_\alpha\) are finite-dimensional vector spaces of the same dimension. Hence \(\Phi_\alpha\) is an isomorphism.  Since $\widetilde S_0=S_0=J^H_\nabla:\mathcal H^1_\nabla \to \mathcal H^1_\nabla$ is an isomorphism, for small $\alpha$, $\widetilde{S}_\alpha$ is still an isomorphism.  Therefore $
S_\alpha=\Phi_\alpha\circ \widetilde S_\alpha\circ \Phi_\alpha^{-1}$ is also an isomorphism.
\end{enumerate}
Since $\mathcal H^1_\alpha $ is finite-dimensional and $-S^2_\alpha$ is positive definite self-adjoint, let $R_\alpha$ denote the unique positive square root of $-S^2_\alpha$ and $R_\alpha$ commutes with $S_\alpha$ since \(R_\alpha\) commutes with any bounded linear operator that commutes with \(-S^2_\alpha\) by \cite[Theorem 6.6.4]{linear}.
\[(\widetilde{J}^H_\alpha)^2=S_{\alpha} R_{\alpha}^{-1}S_{\alpha} R_{\alpha}^{-1}=S_{\alpha}S_{\alpha} R_{\alpha}^{-1} R_{\alpha}^{-1}=S^2_\alpha (-S^2_\alpha)^{-1}=-\Id.\] 

Moreover, we have $
(\widetilde{J}^{H}_{\alpha})^{*,g_\alpha^{\mathrm{h}}}
=
(S_{\alpha}R_{\alpha}^{-1})^{*,g_\alpha^{\mathrm{h}}}
=
R_{\alpha}^{-1}S_{\alpha}^{*,g_\alpha^{\mathrm{h}}}
=
-R_{\alpha}^{-1}S_{\alpha}
=
-\widetilde{J}^{H}_{\alpha}.
$ Thus the operator $\widetilde{J}^{H}_{\alpha}$ is $g_\alpha^{\mathrm{h}}$-orthogonal.
\end{proof}

The above proposition gives $\mathcal I_\alpha^2=\mathcal J_\alpha^2=\mathcal K_\alpha^2=-\Id$.
\begin{proposition}
The three operators $\mathcal I_\alpha,\mathcal J_\alpha,\mathcal K_\alpha$ defined in \eqref{cs} are almost complex structures satisfying $\mathcal I_\alpha\mathcal J_\alpha=\mathcal K_\alpha$, and thus constitute an almost hypercomplex structure on $T_\alpha S_{\nabla,\varepsilon}$.
\end{proposition}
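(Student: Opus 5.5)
The plan is to verify the algebraic relations on the model space $\mathcal H^1_\alpha$, where $\tilde I^H_\alpha,\tilde J^H_\alpha,\tilde K^H_\alpha$ live, and then transport them to $T_\alpha S_{\nabla,\varepsilon}$ by conjugation with the isomorphism $Q_\alpha$ from the proposition above. Conjugation preserves all polynomial identities: $\mathcal L_\alpha^2=Q_\alpha^{-1}(\tilde L^H_\alpha)^2Q_\alpha$ and $\mathcal I_\alpha\mathcal J_\alpha=Q_\alpha^{-1}\tilde I^H_\alpha\tilde J^H_\alpha Q_\alpha$. It therefore suffices to prove $(\tilde I^H_\alpha)^2=(\tilde J^H_\alpha)^2=(\tilde K^H_\alpha)^2=-\Id$ and $\tilde I^H_\alpha\tilde J^H_\alpha=\tilde K^H_\alpha$ on $\mathcal H^1_\alpha$.

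\textbf{Step 1: behaviour of $I_\alpha$.} I take $\tilde I^H_\alpha=I^H_\alpha$, which is multiplication by $i$. It preserves $\mathcal H^1_\alpha$ because $d_{2,\alpha}$ and $d^{*,h_\alpha}_{1,\alpha}$ are complex-linear. For the same reason the harmonic projection $\mathbb H_\alpha$ commutes with $I_\alpha$: the splitting $\operatorname{Im}d_{1,\alpha}\oplus\ker d^*_{1,\alpha}$ is a splitting into complex subspaces. Clearly $(\tilde I^H_\alpha)^2=-\Id$.

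\textbf{Step 2: anticommutation.} From the quaternionic identities $J_\alpha I_\alpha=-I_\alpha J_\alpha$ and Step 1, we get
$$S_\alpha I_\alpha=\mathbb H_\alpha J_\alpha I_\alpha=-\mathbb H_\alpha I_\alpha J_\alpha=-I_\alpha\mathbb H_\alpha J_\alpha=-I_\alpha S_\alpha$$
on $\mathcal H^1_\alpha$. Hence $S_\alpha^2$ commutes with $I_\alpha$. The positive square root $R_\alpha$ of $-S_\alpha^2$ commutes with every operator commuting with $-S_\alpha^2$; this is the same fact \cite[Theorem 6.6.4]{linear} already used. Alternatively, on the finite-dimensional space $\mathcal H^1_\alpha$, $R_\alpha$ is a real polynomial in $S_\alpha^2$. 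So $R_\alpha$ and $R_\alpha^{-1}$ commute with $I_\alpha$, and consequently $\tilde J^H_\alpha=S_\alpha R_\alpha^{-1}$ anticommutes with $I_\alpha$. By the preceding proposition, $(\tilde J^H_\alpha)^2=-\Id$.

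\textbf{Step 3: the third operator.} If $\tilde K^H_\alpha$ is defined directly as $I_\alpha\tilde J^H_\alpha$, then
$$(\tilde K^H_\alpha)^2=I_\alpha\tilde J^H_\alpha I_\alpha\tilde J^H_\alpha=-I_\alpha^2(\tilde J^H_\alpha)^2=-\Id,$$
and $\tilde I^H_\alpha\tilde J^H_\alpha=\tilde K^H_\alpha$ holds by definition.

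If instead $\tilde K^H_\alpha$ is defined by the same polar recipe, namely $S^K_\alpha=\mathbb H_\alpha K^H_\alpha$ and $\tilde K^H_\alpha=S^K_\alpha(R^K_\alpha)^{-1}$, I check that the two definitions agree. Using $K_\alpha=I_\alpha J_\alpha$ and Step 1, $S^K_\alpha=\mathbb H_\alpha I_\alpha J_\alpha=I_\alpha S_\alpha$. Then
$$-(S^K_\alpha)^2=-I_\alpha S_\alpha I_\alpha S_\alpha=I_\alpha^2S_\alpha^2=-S_\alpha^2,$$
so by uniqueness of the positive square root $R^K_\alpha=R_\alpha$. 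Hence $\tilde K^H_\alpha=I_\alpha S_\alpha R_\alpha^{-1}=\tilde I^H_\alpha\tilde J^H_\alpha$. This also confirms that $S^K_\alpha$ is invertible and skew-adjoint, so $\tilde K^H_\alpha$ is a genuine almost complex structure.

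\textbf{Step 4 and main obstacle.} Conjugating by $Q_\alpha$ gives the almost hypercomplex structure on $T_\alpha S_{\nabla,\varepsilon}$; the remaining relations $\mathcal J_\alpha\mathcal K_\alpha=\mathcal I_\alpha$ and so on follow formally from the three above. The only genuinely non-formal point is Step 2: the commutation of $\mathbb H_\alpha$ with $I_\alpha$, and of $R_\alpha$ with $I_\alpha$. If $R_\alpha$ failed to commute with $I_\alpha$, the relation $\mathcal I_\alpha\mathcal J_\alpha=-\mathcal J_\alpha\mathcal I_\alpha$, and with it $\mathcal K_\alpha^2=-\Id$, would be lost. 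I will therefore state the commutation of $R_\alpha$ with $I_\alpha$ explicitly, via the polynomial-in-$S_\alpha^2$ description on the finite-dimensional space $\mathcal H^1_\alpha$.
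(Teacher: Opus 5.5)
Your argument is correct and is essentially the paper's own. The paper likewise uses that $I_\alpha$ preserves $\mathcal H^1_\alpha$ and commutes with $\mathbb H_\alpha$, so that $\widetilde I^H_\alpha=I^H_\alpha$. It then proves $S_{K,\alpha}=I^H_\alpha S_{J,\alpha}$, $S_{J,\alpha}I^H_\alpha=-I^H_\alpha S_{J,\alpha}$ and $R_{K,\alpha}=R_{J,\alpha}$ (by uniqueness of the positive square root), and finally conjugates by $Q_\alpha$. Your explicit justification that $R_{J,\alpha}^{-1}$ commutes with $I_\alpha$ fills a step the paper uses without comment.

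The paper's proof also goes on to check compatibility with the transition maps $\tau_{\nabla\nabla'}$, via gauge-equivariance of $\mathbb H_\alpha$, $J^H_\alpha$, $S_\alpha$ and $R_\alpha$, and smoothness of $\alpha\mapsto\mathcal J_\alpha$. You do not address either point. Neither is needed for the pointwise claim on $T_\alpha S_{\nabla,\varepsilon}$, but they are what let the structures glue over $\mathcal U$.
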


\begin{proof}
We first verify that $\mathcal I_\alpha,\mathcal J_\alpha,\mathcal K_\alpha$ satisfy the quaternionic relations and then check that these structures are compatible with the transition maps between the slices.
To distinguish these operators, we attach the subscripts \(I,J,K\) to \(S_\alpha\) and \(R_\alpha\), according to the corresponding almost complex structures.

First, since \(I\) is a complex structure already, it preserves the harmonic forms, and it follows immediately that \(\mathbb H_\alpha I_\alpha=I_\alpha \mathbb H_\alpha=I_\alpha^H \mathbb H_\alpha,S_{I,\alpha}=\mathbb H_\alpha I^H_\alpha=I^H_\alpha, S^2_{I,\alpha}=-\Id_{\mathcal H^1_\alpha},R_{I,\alpha}=\Id_{\mathcal H^1_\alpha},\widetilde{I}^H_{\alpha}=I^H_\alpha\).

\begin{enumerate}[label=\textup{(\roman*)}]
\item $S_{K,\alpha}=I^H_\alpha S_{J,\alpha}$.

 $
S_{K,\alpha}
=
\mathbb H_{\alpha}K_{\alpha}^{H}
=
\mathbb H_{\alpha}I_{\alpha}J_{\alpha}^{H}
=
I_{\alpha}\mathbb H_{\alpha}J_{\alpha}^{H}
=
I_{\alpha}^{H}S_{J,\alpha} .$
\item $S_{J,\alpha}I_\alpha^H
=
-I_\alpha^H S_{J,\alpha}$.

$
S_{J,\alpha}I_\alpha^H =
\mathbb H_\alpha J_\alpha^H I_\alpha^H =
-\mathbb H_\alpha I_\alpha J_\alpha^H =
-I_\alpha^H\mathbb H_\alpha J_\alpha^H =
-I_\alpha^H S_{J,\alpha} .
$
\item $-S_{K,\alpha}^2
=
-S_{J,\alpha}^2$.

$
S_{K,\alpha}^2=
\bigl(I_\alpha^H S_{J,\alpha}\bigr)
\bigl(I_\alpha^H S_{J,\alpha}\bigr)
=
I_\alpha^H S_{J,\alpha}I_\alpha^H S_{J,\alpha}
=
I_\alpha^H\bigl(-I_\alpha^H S_{J,\alpha}\bigr)S_{J,\alpha}
=
-\bigl(I_\alpha^H\bigr)^2S_{J,\alpha}^2
=
S_{J,\alpha}^2.
$
\item
$R_{K,\alpha}
=
R_{J,\alpha}$ by uniqueness of the positive square root.
\end{enumerate}

Then we have $
\widetilde K_\alpha^H
=
S_{K,\alpha}R_{K,\alpha}^{-1}
=
I_\alpha^H S_{J,\alpha}R_{J,\alpha}^{-1}
=
I_\alpha^H \widetilde J_\alpha^H=\widetilde{I}_\alpha^H \widetilde J_\alpha^H,
$ and $
\widetilde{J}^{H}_{\alpha} \widetilde{I}^{H}_{\alpha}
= S_{J,\alpha} R^{-1}_{J,\alpha} \widetilde{I}^{H}_{\alpha} 
= S_{J,\alpha} \widetilde{I}^{H}_{\alpha} R^{-1}_{J,\alpha} 
= - \widetilde{I}^{H}_{\alpha} S_{J,\alpha} R^{-1}_{J,\alpha} 
= - \widetilde{I}^{H}_{\alpha} \widetilde{J}^{H}_{\alpha}.
$

 Thus $\mathcal I_\alpha\mathcal J_\alpha=Q_\alpha^{-1} \widetilde{I}^H_\alpha \widetilde{J}^H_\alpha  Q_\alpha=Q_\alpha^{-1} \widetilde{K}^H_\alpha Q_\alpha=\mathcal K_\alpha$ and $\mathcal I_\alpha\mathcal J_\alpha=-\mathcal J_\alpha\mathcal I_\alpha$.

It is enough to prove the compatibility for \(\mathcal J_\alpha\), since the arguments for the other almost complex structures are analogous. Thus it suffices to establish the commutativity of the following diagram:
\begin{equation}
\begin{tikzcd}[column sep=large,row sep=large]
T_{\alpha}S_{\nabla,\varepsilon}
\arrow[r, "\mathcal J^{\nabla}_{\alpha}"]
\arrow[d, "(\tau_{\nabla\nabla'})_{*,\alpha}"']
&
T_{\alpha}S_{\nabla,\varepsilon}
\arrow[d, "(\tau_{\nabla\nabla'})_{*,\alpha}"]
\\
T_{\alpha'}S_{\nabla',\varepsilon'}
\arrow[r, "\mathcal J^{\nabla'}_{\alpha'}"]
&
T_{\alpha'}S_{\nabla',\varepsilon'}
\end{tikzcd}
\end{equation}
where $\mathcal J_{\alpha}=Q_{\alpha}^{-1}\widetilde{L}_{\alpha}^{H} Q_{\alpha}$ and the subscript indicates the slice on which the operator is defined. Since \(\mathcal H_\alpha^1\) is a complex subspace, we have
$\bigl(\mathcal H_\alpha^1\bigr)^{\perp,g_\alpha}=\bigl(\mathcal H_\alpha^1\bigr)^{\perp,h_\alpha}.
$

The following identities establish the commutativity of the diagram. Here and below,
\(u_\alpha\) acts on tangent vectors by the gauge action. Thus, for any operator
\(L\), we use the shorthand
$
(u_\alpha\circ L)(\beta)=u_\alpha\cdot(L\beta)$, and $
(L\circ u_\alpha)(\beta)=L(u_\alpha\cdot\beta).
$

\begin{enumerate}[label=\textup{(\roman*)}]
\item $\mathbb H_{\alpha'}\circ u_\alpha = u_\alpha  \circ \mathbb H_{\alpha}$. It implies $Q_{\alpha'}\circ \tau_*=u_\alpha \circ Q_\alpha$.

We need to prove $u_\alpha (\mathcal H^1_\alpha) \subset \mathcal H^1_{\alpha'}$ and $u_\alpha ((\mathcal H^1_\alpha)^{\perp,g_\alpha}) \subset (\mathcal H^1_{\alpha'})^{\perp,g_{\alpha'}}$. It is clear since $g_{\alpha'}(u_\alpha \beta_1,u_\alpha \beta_2)=g_\alpha(\beta_1,\beta_2)$ and $u_\alpha (\mathcal H^1_{\alpha})=\mathcal H^1_{\alpha'}$.

\item $J^H_{\alpha'}\circ u_\alpha=u_\alpha \circ J^H_{\alpha}$.

$
J_{\alpha'}^{H}u_{\alpha}(a,b)=
J_{\alpha'}^{H}
\bigl(
u_{\alpha}^{-1}a u_{\alpha},
 u_{\alpha}^{-1}b u_{\alpha}
\bigr)=
\left(
i\bigl( u_{\alpha}^{-1}b u_{\alpha}\bigr)^{*,h_{\alpha'}},
-i\bigl( u_{\alpha}^{-1}a u_{\alpha}\bigr)^{*,h_{\alpha'}}
\right)$ \\
= $
(
i u_{\alpha}^{-1}b^{*,h_{\alpha}} u_{\alpha},$$
-i u_{\alpha}^{-1}a^{*,h_{\alpha}} u_{\alpha}
)$ $=
 u_{\alpha}
\bigl(
ib^{*,h_{\alpha}},
-ia^{*,h_{\alpha}}
\bigr)$ $=
 u_{\alpha}J_{\alpha}^{H}(a,b).
$
\item $S_{\alpha'} \circ u_\alpha=u_\alpha \circ S_{\alpha}$.

$
S_{\alpha'}u_\alpha=
\mathbb H_{\alpha'}\, J^H_{\alpha'}\, u_\alpha  =
\mathbb H_{\alpha'}\,u_\alpha\,J^H_{\alpha}  =
u_\alpha\,\mathbb H_{\alpha}\,J^H_{\alpha}  =
u_\alpha\,S_{\alpha}.
$
$S_{\alpha'}=u_\alpha\,S_{\alpha}u_\alpha^{-1}$.
$-S^2_{\alpha'}=u_\alpha\,(-S^2_{\alpha})u_\alpha^{-1}$.
Since $u_\alpha$ is an isometry, and the positive square root is unique, then $R_{\alpha'}=u_\alpha\,R_{\alpha}u_\alpha^{-1}$.

\item $\widetilde{J}^{H}_{\alpha'}\circ u_\alpha=u_\alpha \circ \widetilde{J}^{H}_\alpha$.

$
\widetilde{J}^{H}_{\alpha'}\,u_\alpha
=
S_{\alpha'}\,R_{\alpha'}^{-1}\,u_\alpha  =
u_\alpha\,S_{\alpha}\,u_\alpha^{-1}
\circ
u_\alpha\,R_{\alpha}^{-1}\,u_\alpha^{-1}
\circ
u_\alpha  =
u_\alpha\,S_{\alpha}\,R_{\alpha}^{-1}  =
u_\alpha\,\widetilde{J}^{H}_\alpha.
$
\end{enumerate}
Using the above identities, we compute,
\begin{equation}
\begin{aligned}
Q_{\alpha'}
\Bigl(
(\tau_{\nabla\nabla'})_{*,\alpha}\,
\mathcal J_\alpha^\nabla \beta
\Bigr)
&=
Q_{\alpha'}
\Bigl(
u_\alpha \cdot (\mathcal J_\alpha^\nabla\beta)
+
d_{1,\alpha'}\psi_{J_\alpha^\nabla\beta}
\Bigr) \\
&=
(\mathbb H_{\alpha'}\circ u_\alpha) (\mathcal J_\alpha^\nabla\beta) \\
&=
(u_\alpha \circ \mathbb H_\alpha)(\mathcal J_\alpha^\nabla\beta) \\
&=
(u_\alpha \circ Q_\alpha)
(
Q_\alpha^{-1}\widetilde{J}_\alpha^H Q_\alpha\beta
) \\
&=
(u_\alpha \circ \widetilde{J}_\alpha^H Q_\alpha)\beta \\
&=
(\widetilde{J}_{\alpha'}^H \circ u_\alpha\circ  Q_\alpha)\beta \\
&=
\widetilde{J}_{\alpha'}^H Q_{\alpha'}
\Bigl(
(\tau_{\nabla\nabla'})_{*,\alpha}\beta
\Bigr)\\
&=
 Q_{\alpha'}  Q_{\alpha'}^{-1}\widetilde{J}_{\alpha'}^H
 Q_{\alpha'}\Bigl(
(\tau_{\nabla\nabla'})_{*,\alpha}\beta
\Bigr)\\
&= Q_{\alpha'} \mathcal J_{\alpha'}^{\nabla'}\Bigl(
(\tau_{\nabla\nabla'})_{*,\alpha}\beta
\Bigr).
\end{aligned}
\end{equation}

Since $Q_{\alpha'}$ is an isomorphism for small $\alpha'$, the diagram is commutative. We now verify that the almost complex structure constructed above is smooth. It is enough to show that the assignment $\alpha \longmapsto \mathcal J_\alpha$ is smooth. By construction, this reduces to the smoothness of the following two ingredients. 

First, the map $\alpha \longmapsto \mathbb H_\alpha
$
is smooth. This follows from \cite[Theorem 5.1]{Te}, applied to the family of formally self-adjoint strongly elliptic operators \(L_s=\Delta_\alpha^1\). In that theorem, \(F_s\) denotes the harmonic projector, which corresponds in our notation to \(\mathbb H_\alpha\).

Second, the map
$
\alpha \longmapsto R_\alpha
$
is smooth. Indeed, \(R_\alpha\) is defined as the principal square root of the positive self-adjoint operator
$
-S_\alpha^2.
$
After choosing a local orthonormal frame, \(-S_\alpha^2\) is represented by a smooth family of real symmetric positive definite matrices. Therefore the smoothness of \(R_\alpha\) follows from \cite[Theorem 1.1]{Tay}, applied to \(Q=-S_\alpha^2\).

\end{proof}

\begin{remark}
The underlying Riemannian metric of the Hermitian metric is compatible with the almost hypercomplex structure since
\begin{equation}
\begin{aligned}
\langle \mathcal L_\alpha\beta_1, \mathcal L_\alpha\beta_2 \rangle_{g_\alpha}^{\mathrm{h}}
&=
\langle
Q_\alpha Q_\alpha^{-1} \widetilde{L}^H_\alpha Q_\alpha \beta_1,
Q_\alpha Q_\alpha^{-1} \widetilde{L}^H_\alpha Q_\alpha \beta_2
\rangle^{\mathrm{h}}_{g_\alpha}\\
&=
\langle
 \widetilde{L}^H_\alpha Q_\alpha \beta_1,
\widetilde{L}^H_\alpha Q_\alpha \beta_2
\rangle^{\mathrm{h}}_{g_\alpha}\\
&=
\langle
Q_\alpha \beta_1,
Q_\alpha \beta_2
\rangle^{\mathrm{h}}_{g_\alpha}\\
&=
\langle  \beta_1, \mathcal \beta_2 \rangle_{g_\alpha}^{\mathrm{h}}.
\end{aligned}
\end{equation}
\end{remark}

\section{Toward the Hyperkähler Reduction Method}

In this section, we discuss whether the hyperkähler quotient method proposed in \cite[Section 8]{NHYM} can be applied to the NHYM moduli space. 

We begin by recalling the hyperkähler quotient construction of
\cite[Theorem 3.2]{hklr}, which provides a mechanism for producing new hyperkähler
manifolds from hyperkähler manifolds equipped with tri-Hamiltonian group
actions.
\begin{theorem}
Let \((M,g,I,J,K)\) be a hyperkähler manifold and let \(G\) be a compact Lie group acting on \(M\) by triholomorphic isometries. Suppose that the action admits an equivariant hyperkähler moment map $
    \mu=(\mu_1,\mu_2,\mu_3):M\longrightarrow \mathfrak g^*\otimes\mathbb R^3 .$
If \(0\) is a regular value of \(\mu\) and \(G\) acts freely on \(\mu^{-1}(0)\), then $\mu^{-1}(0)/G$
is a smooth hyperkähler manifold. Its metric and three Kähler forms are induced by restricting the corresponding tensors on \(M\) to the horizontal distribution orthogonal to the \(G\)-orbits.
\end{theorem}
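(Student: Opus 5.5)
The plan is the standard Hitchin--Karlhede--Lindström--Roček argument, organized in three stages: smoothness of the quotient, a metric on it, and the hyperkähler structure on that metric.

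\textbf{Smoothness.} Since \(0\) is a regular value, \(N:=\mu^{-1}(0)\) is a smooth submanifold of codimension \(3\dim\mathfrak g\). Because \(\mu\) is equivariant, \(G\) preserves \(N\). The action is free, and since \(G\) is compact it is also proper, so by the slice theorem \(X:=N/G\) is a smooth manifold and \(\pi:N\to X\) is a principal \(G\)-bundle.

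\textbf{The quotient metric.} At \(x\in N\) let \(V_x=\{\xi^\#_x\}\) be the tangent space to the orbit, and let \(H_x=V_x^{\perp}\cap T_xN\) be the horizontal space. Then \(d\pi_x:H_x\to T_{\pi(x)}X\) is an isomorphism. The group acts by isometries and preserves \(N\), so it carries horizontal spaces to horizontal spaces isometrically. Hence \(g|_{H}\) descends to a well-defined metric \(\check g\) on \(X\). The same argument works for the restrictions of the \(\omega_i\) and of the \(I,J,K\), provided \(H\) is shown to be quaternionic.

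\textbf{Key step: \(H_x\) is quaternionic.} The moment map identity is
\[
d\langle\mu_i,\xi\rangle=\iota_{\xi^\#}\omega_i=g(I_i\xi^\#,\cdot\,).
\]
It gives
\[
T_xN=\bigcap_i\ker d\mu_i=\{I_1V_x,I_2V_x,I_3V_x\}^{\perp}.
\]
Therefore
\[
H_x=\bigl(V_x+I_1V_x+I_2V_x+I_3V_x\bigr)^{\perp}.
\]
The sum \(V_x\oplus I_1V_x\oplus I_2V_x\oplus I_3V_x\) is \(\mathbb H\)-invariant, and each \(I_i\) is \(g\)-orthogonal. Hence its orthogonal complement \(H_x\) is \(\mathbb H\)-invariant. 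Since each \(I_i\) is \(G\)-invariant, the restrictions descend to almost complex structures \(\check I,\check J,\check K\) on \(X\). They satisfy the quaternion relations and are \(\check g\)-orthogonal. The associated forms satisfy \(\pi^*\check\omega_i=\iota^*\omega_i\), where \(\iota:N\hookrightarrow M\) is the inclusion. For this, note that \(\iota^*\omega_i\) vanishes on vertical vectors: \(\omega_i(\xi^\#,v)=d\mu_i(v)(\xi)=0\) for \(v\in TN\). Because \(\pi^*\) is injective, it follows that \(d\check\omega_i=0\).

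\textbf{Main obstacle: integrability.} I expect the real difficulty to be showing that \(\check I,\check J,\check K\) are parallel, equivalently integrable. I would use Hitchin's lemma: an almost hypercomplex Hermitian structure whose three fundamental forms are closed is hyperkähler. To prove the lemma, note that the \((2,0)\)-form \(\check\omega_2+i\check\omega_3\) with respect to \(\check I\) is closed. This forces \(T^{0,1}\) to be involutive. One recovers \(T^{0,1}_{\check I}\) as the annihilator of \(\check\omega_2+i\check\omega_3\) and applies Cartan's formula. This gives integrability of \(\check I\), and by symmetry of \(\check J,\check K\). Then \(\nabla\check\omega_i=0\) follows from the standard formula expressing \(\nabla\omega\) through \(d\omega\) and the Nijenhuis tensor. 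Combining this lemma with the closedness of the \(\check\omega_i\) completes the proof.
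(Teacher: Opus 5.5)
Your proposal is correct and is the standard Hitchin--Karlhede--Lindstr\"om--Ro\v{c}ek argument: the horizontal space $H_x=(V_x+I_1V_x+I_2V_x+I_3V_x)^{\perp}$ is quaternionic, the closed forms descend via $\pi^*\check\omega_i=\iota^*\omega_i$, and Hitchin's lemma shows that an almost hyperhermitian structure with closed fundamental forms is hyperk\"ahler. The paper does not prove this statement itself; it only cites \cite[Theorem 3.2]{hklr}, whose proof proceeds along these same lines.
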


This construction has several standard applications. Kronheimer's construction realizes
hyperkähler ALE spaces \cite{Kron1}. In the same spirit, Nakajima realizes quiver
varieties as hyperkähler quotients, relating them to ALE instantons and
Kac--Moody representation theory \cite{Nakajima}.

Hitchin's construction is the basic infinite-dimensional gauge-theoretic
analogue of the hyperkähler quotient theorem. The ambient space is the
configuration space of pairs consisting of a connection and a Higgs field, modeled on infinite-dimensional spaces of sections, and the unitary gauge group acts on it by
triholomorphic isometries. The self-duality equations are the corresponding
hyperkähler moment-map equations. Unlike the finite-dimensional case, however,
the quotient requires analytic input: one works with Sobolev completions and
uses the slice theorem, ellipticity, and regularity to obtain the smooth
hyperkähler moduli space \cite{Hitchin1}. This gauge-theoretic infinite-dimensional viewpoint
also underlies Konno's construction of parabolic Higgs bundle moduli spaces and
Fujiki's study of flat-bundle moduli in higher dimensional Kähler geometry
\cite{Konno,fujiki}.

For the purposes of the following discussion, we ignore the issue of whether the objects involved satisfy the assumptions of the hyperkähler quotient theorem and proceed at a purely formal level.

The NHYM moduli space considered here belongs to the infinite-dimensional setting. We therefore introduce related frameworks.
The space of all connections \(\mathcal A\) is an affine space modeled on the complex vector space
$
\Omega^1(\End E)
$. Choose a Hermitian metric \(h\) on
\(E\). The decomposition
$
\Omega^1(\End E)
=
\Omega^{0,1}(\End E)\oplus \Omega^{1,0}(\End E)
$
allows one to define a quaternionic structure on \(\Omega^1(M,\End E)\). Together
with the natural trace metric and a suitable Sobolev completion, this structure makes \(\mathcal A\) an
infinite-dimensional hyperkähler manifold.

Kaledin and Verbitsky suggest that the map
$
\mu_\mathbb C(\nabla)= \Lambda \Theta
$ on $\mathcal A$ 
should be regarded as a complex moment map. This leads formally to the expectation
that the quotient \(\mu_\mathbb C^{-1}(0)/ G\) should carry a hyperkähler structure by the
principle of hyperkähler reduction. However, this argument has to be interpreted
with care. A genuine hyperkähler quotient involves not only the complex moment
map, but also the real moment map, and the quotient is taken by a unitary
gauge group rather than by the full complex gauge group. Thus the formal quotient
\(\mu_\mathbb C^{-1}(0)/ G\) is not automatically a hyperkähler quotient in the
standard sense. For this reason, the hyperkähler quotient picture must be
reformulated more carefully.

Define the quaternionic structure on $\Omega^1(\End E)$ by 
\begin{equation}
I(a,b)=(ia,ib),\
J(a,b)=\bigl(i b^{*,h},-i a^{*,h}\bigr),\
K(a,b)=\bigl(-b^{*,h},a^{*,h}\bigr).
\end{equation}

It is useful to emphasize a basic difference between the NHYM and HYM settings.
In the NHYM case, the \((1,0)\) and \((0,1)\)-parts are
independent variables. This additional freedom makes it possible, at least formally,
to define a quaternionic triple \(I,J,K\) on the ambient space of connections. By
contrast, in the HYM setting one works with unitary connections with respect to a
fixed Hermitian metric. Hence the \((1,0)\)-part is determined by the \((0,1)\)-part
through the Hermitian adjoint. The same flat hyperkähler ambient model is not
available in this formulation.

The complex moment map is \(\mu_\mathbb C=\Lambda \nabla^2\) and 
the real moment map is \(\mu_\mathbb R=\Lambda \Xi_{\nabla,h}\) \cite[pp. 41--42]{NHYM}. We use \(U(E,h)\) to denote the unitary gauge group with respect to the
fixed Hermitian metric \(h\), because \(\mu_{\mathbb R}^{-1}(0)\) is \(U(E,h)\)-invariant.
Hence the formal hyperkähler quotient is \begin{equation}\mathcal M^{hk}=(\mu^{-1}_\mathbb C(0)\cap \mu^{-1}_\mathbb R(0))/U(E,h).\end{equation}

An important consequence of Theorem~\ref{ha} is that it provides, by choosing harmonic metrics, a set-theoretic injection
\begin{equation}
\iota:\mathcal M^s \hookrightarrow \mathcal M^{hk} .
\end{equation}

For a representative $\nabla \in [\nabla]_{G} \in \mathcal{M}^s$, we choose the harmonic metric $h_\nabla$ such that $\Lambda \Xi_{\nabla,h_{\nabla}}=0$, and there exists $q_\nabla \in \Gamma(\Herm^+_{h}(E))$ with $h_\nabla=(q_\nabla)^* h$, i.e., $h_\nabla(s,t)=h(q_\nabla s,q_\nabla t)$. Since $(\nabla,h_\nabla)$ is a harmonic pair, the transformed pair $(q_\nabla^{-1}\cdot \nabla ,(q_\nabla^{-1})^* h_\nabla)$ is also harmonic. Moreover, $(q_\nabla^{-1})^* h_\nabla=h$ and $\mu_\mathbb R(q_\nabla^{-1}\cdot \nabla)=\Lambda \Xi_{q_\nabla^{-1}\nabla,h}=0$. Thus $[q_\nabla^{-1}\cdot \nabla]_{U(E,h)} \in \mathcal M^{hk}$.

We then define $\iota([\nabla]_{G})=[q_\nabla^{-1} \cdot \nabla ]_{U(E,h)}$. It is well-defined. First, if one chooses another $\widehat{h}_\nabla=c h_\nabla$ with $\Lambda \Xi_{\nabla,\widehat{h}_\nabla}=0$, then $\widehat{h}_\nabla=\widehat{q}^*_\nabla h=(\sqrt{c}q_\nabla)^* h$. Hence $\widehat{q}^{-1}_\nabla \cdot \nabla=(\sqrt{c}q_\nabla)^{-1}\cdot \nabla=q_\nabla^{-1}\cdot \nabla$. Therefore $[q_\nabla^{-1} \cdot \nabla ]_{U(E,h)}=[\widehat{q}_\nabla^{-1} \cdot \nabla ]_{U(E,h)}$. Second, if $\nabla'=u \cdot \nabla \in [\nabla]_{G}$ for some $u \in G$, then $(\nabla',u^* h_\nabla)$ is harmonic. By the uniqueness, $h_{\nabla'}=c u^* h_\nabla=c u^* q^*_\nabla h =(\sqrt{c}q_\nabla u)^* h$. Thus $q_{\nabla'}$ and $\sqrt{c}q_\nabla u$ induce the same Hermitian metric from $h$. There exists $v \in U(E,h)$ such that $q_{\nabla'}=v \sqrt{c}q_\nabla u$. Therefore $q_{\nabla'}^{-1}\cdot \nabla'=(q^{-1}_{\nabla}v^{-1})\cdot \nabla=v^{-1}\cdot (q^{-1}_\nabla \cdot \nabla)$. It implies $[q_\nabla^{-1} \cdot \nabla ]_{U(E,h)}=[q_{\nabla'}^{-1} \cdot \nabla' ]_{U(E,h)}$. Here we use the convention that \(a\cdot(b\cdot\nabla)=(ba)\cdot\nabla\).

To see the injectivity, if $\iota([\nabla_1]_G)=\iota([\nabla_2]_G)$, then there exists \(v\in U(E,h)\) such that $ q_{\nabla_2}^{-1}\cdot\nabla_2 = v\cdot(q_{\nabla_1}^{-1}\cdot\nabla_1)= (q_{\nabla_1}^{-1}v)\cdot\nabla_1.$ Therefore, $\nabla_2=(q_{\nabla_1}^{-1}v q_{\nabla_2})\cdot\nabla_1$ where $q_{\nabla_1}^{-1}v q_{\nabla_2} \in G$. It follows that $[\nabla_1]_G=[\nabla_2]_G$.

The actual NHYM condition is stronger. Besides the moment-map
equation, it also requires the curvature-type conditions
$
\Theta^{2,0}=0,
\Theta^{0,2}=0
$. Thus this map is not expected to be surjective in general. 
\(\mathcal M^s\) should be regarded, at most, as a proper subspace of \(\mathcal M^{hk}\) via its image.

The question is therefore whether \(\mathcal M^s\) can inherit a hyperkähler structure from the ambient space. The following two examples illustrate both possibilities. The point is whether the subspace is invariant under the ambient hypercomplex structure. In the NHYM case, the additional curvature-type conditions are not preserved by the full hypercomplex structure, and hence \(\mathcal M^s\) cannot inherit the ambient hyperkähler structure in this direct way.

In Fujiki's work on flat
bundles over compact Kähler manifolds, the desired finite-dimensional moduli
space is obtained as a distinguished subspace of a larger hyperkähler quotient.
The hyperkähler structure restricts to this subspace precisely because its
tangent spaces are preserved by the three ambient complex structures
\cite{fujiki}. 

By contrast, in Dey's study of the dimensionally reduced
Seiberg--Witten equations with a Higgs field, a subset of the equations gives a
hyperkähler moduli space, but the moduli space defined by the full system does
not inherit this hyperkähler structure. Nevertheless, it still carries a natural
symplectic structure, together with an almost complex structure \cite{Dey}.
This comparison is relevant for the NHYM problem: even if the natural ambient
construction does not yield a hyperkähler metric on the NHYM moduli space, the
existence of a natural symplectic structure remains compatible with this general
pattern.

Besides the hyperkähler quotient method, another important source of
hyperkähler metrics is the twistor construction. Hitchin, Karlhede,
Lindström, and Roček explain how suitable twistor data can be used to
reconstruct hyperkähler manifolds \cite[Theorem 3.3]{hklr}. Feix proved that the cotangent bundle of
any real-analytic Kähler manifold carries a canonical hyperkähler metric in a
neighborhood of the zero section \cite{feix}. More recently, Mayrand
generalized this local construction: in particular, he showed that every
holomorphic symplectic groupoid over a compact holomorphic Poisson surface of
Kähler type admits a hyperkähler structure in a neighborhood of its identity
section \cite{Mayrand}. 
  This suggests a different possible approach to the NHYM problem: if the NHYM moduli space near the Hermitian locus could be identified with an appropriate twistor model, then the above construction might provide another route to a hyperkähler metric. We leave this possible approach for future work.

\bibliographystyle{unsrt}
\bibliography{sn-article}

@article {NHYM,
    AUTHOR = {Kaledin, D. and Verbitsky, M.},
     TITLE = {Non-{H}ermitian {Y}ang-{M}ills connections},
   JOURNAL = {Selecta Math. (N.S.)},
  FJOURNAL = {Selecta Mathematica. New Series},
    VOLUME = {4},
      YEAR = {1998},
    NUMBER = {2},
     PAGES = {279--320},
      ISSN = {1022-1824,1420-9020},
   MRCLASS = {53C07 (32G13 53C26 53C28 53C55)},
  MRNUMBER = {1669956},
MRREVIEWER = {Antony\ Maciocia},
       DOI = {10.1007/s000290050033},
       URL = {https://doi.org/10.1007/s000290050033},
}

@article {Itoh2,
    AUTHOR = {Itoh, Mitsuhiro},
     TITLE = {Geometry of anti-self-dual connections and {K}uranishi map},
   JOURNAL = {J{.} Math. Soc. Japan},
  FJOURNAL = {Journal of the Mathematical Society of Japan},
    VOLUME = {40},
      YEAR = {1988},
    NUMBER = {1},
     PAGES = {9--33},
      ISSN = {0025-5645,1881-1167},
   MRCLASS = {53C05 (14D20 32G13 53C25 58D17 58E15)},
  MRNUMBER = {917392},
MRREVIEWER = {P.\ E.\ Newstead},
       DOI = {10.2969/jmsj/04010009},
       URL = {https://doi.org/10.2969/jmsj/04010009},
}

@book {koba1,
    AUTHOR = {Kobayashi, Shoshichi},
     TITLE = {Differential geometry of complex vector bundles},
    SERIES = {Publications of the Mathematical Society of Japan},
    VOLUME = {15},
 PUBLISHER = {Princeton University Press},
 ADDRESS   = {Princeton, NJ},
      YEAR = {1987},
     PAGES = {xii+305},
      ISBN = {0-691-08467-X},
   MRCLASS = {53C55 (32-02 32L05 32L10 32L20)},
  MRNUMBER = {909698},
MRREVIEWER = {Daniel\ M.\ Burns, Jr.},
       DOI = {10.1515/9781400858682},
}

@article {Don,
    AUTHOR = {Donaldson, S. K.},
     TITLE = {Anti-self-dual {Y}ang-{M}ills connections over complex
              algebraic surfaces and stable vector bundles},
   JOURNAL = {Proc. London Math. Soc. (3)},
  FJOURNAL = {Proceedings of the London Mathematical Society. Third Series},
    VOLUME = {50},
      YEAR = {1985},
    NUMBER = {1},
     PAGES = {1--26},
      ISSN = {0024-6115,1460-244X},
   MRCLASS = {58E15 (14F99 53C05 57R99)},
  MRNUMBER = {765366},
MRREVIEWER = {S.\ Ramanan},
       DOI = {10.1112/plms/s3-50.1.1},
       URL = {https://doi.org/10.1112/plms/s3-50.1.1},
}

@article {Sim1,
    AUTHOR = {Simpson, Carlos T.},
     TITLE = {Constructing variations of {H}odge structure using
              {Y}ang-{M}ills theory and applications to uniformization},
   JOURNAL = {J. Amer. Math. Soc.},
  FJOURNAL = {Journal of the American Mathematical Society},
    VOLUME = {1},
      YEAR = {1988},
    NUMBER = {4},
     PAGES = {867--918},
      ISSN = {0894-0347,1088-6834},
   MRCLASS = {58E15 (32L15 53C25 53C55)},
  MRNUMBER = {944577},
       DOI = {10.2307/1990994},
       URL = {https://doi.org/10.2307/1990994},
}

@article {zhang,
    AUTHOR = {Pan, Changpeng and Shen, Zhenghan and Zhang, Xi},
     TITLE = {On the existence of harmonic metrics on non-{H}ermitian
              {Y}ang-{M}ills bundles},
   JOURNAL = {Proc. Lond. Math. Soc. (3)},
  FJOURNAL = {Proceedings of the London Mathematical Society. Third Series},
    VOLUME = {128},
      YEAR = {2024},
    NUMBER = {1},
     PAGES = {Paper No. e12580, 24},
      ISSN = {0024-6115,1460-244X},
   MRCLASS = {53C07},
  MRNUMBER = {4687029},
MRREVIEWER = {Yanci\ Nie},
       DOI = {10.1112/plms.12580},
       URL = {https://doi.org/10.1112/plms.12580},
}

@book{linear,
  title={Foundations of modern analysis},
  author={Friedman, Avner},
  year={1982},
  publisher = {Dover Publications},
  address   = {New York},
isbn      = {0-486-64062-0}
}

@article {hklr,
    AUTHOR = {Hitchin, N. J. and Karlhede, A. and Lindstr\"om, U. and Roček, M.},
     TITLE = {Hyper{k}\"ahler metrics and supersymmetry},
   JOURNAL = {Comm. Math. Phys.},
  FJOURNAL = {Communications in Mathematical Physics},
    VOLUME = {108},
      YEAR = {1987},
    NUMBER = {4},
     PAGES = {535--589},
      ISSN = {0010-3616,1432-0916},
   MRCLASS = {53C25 (32C10 53C55 53C80 58E99 81E99)},
  MRNUMBER = {877637},
MRREVIEWER = {Pankaj\ Topiwala},
DOI = {10.1007/BF01214418},
       URL = {http://projecteuclid.org/euclid.cmp/1104116624},
}

@article {Hitchin1,
    AUTHOR = {Hitchin, N. J.},
     TITLE = {The self-duality equations on a {R}iemann surface},
   JOURNAL = {Proc. Lond. Math. Soc. (3)},
  FJOURNAL = {Proceedings of the London Mathematical Society. Third Series},
    VOLUME = {55},
      YEAR = {1987},
    NUMBER = {1},
     PAGES = {59--126},
      ISSN = {0024-6115,1460-244X},
   MRCLASS = {32G13 (14F05 14H15 32L10 53C05 58E99 81E13)},
  MRNUMBER = {887284},
MRREVIEWER = {Mitsuhiro\ Itoh},
       DOI = {10.1112/plms/s3-55.1.59},
       URL = {https://doi.org/10.1112/plms/s3-55.1.59},
}

@article {Sim11,
    AUTHOR = {Simpson, Carlos T.},
     TITLE = {Moduli of representations of the fundamental group of a smooth
              projective variety. {I}},
   JOURNAL = {Inst. Hautes \'Etudes Sci. Publ. Math.},
  FJOURNAL = {Institut des Hautes \'Etudes Scientifiques. Publications
              Math\'ematiques},
    NUMBER = {79},
      YEAR = {1994},
     PAGES = {47--129},
      ISSN = {0073-8301,1618-1913},
   MRCLASS = {14D20 (14D22 14D25 14F05)},
  MRNUMBER = {1307297},
MRREVIEWER = {Nitin\ Nitsure},
DOI = {10.1007/BF02698887},
       URL = {http://www.numdam.org/item?id=PMIHES_1994__79__47_0},
}

@article {Sim22,
    AUTHOR = {Simpson, Carlos T.},
     TITLE = {Moduli of representations of the fundamental group of a smooth
              projective variety. {II}},
   JOURNAL = {Inst. Hautes \'Etudes Sci. Publ. Math.},
  FJOURNAL = {Institut des Hautes \'Etudes Scientifiques. Publications
              Math\'ematiques},
    NUMBER = {80},
      YEAR = {1994},
     PAGES = {5--79},
      ISSN = {0073-8301,1618-1913},
   MRCLASS = {14D20 (14D22 14F05 14F10)},
  MRNUMBER = {1320603},
MRREVIEWER = {Nitin\ Nitsure},
DOI = {10.1007/BF02698895},
       URL = {http://www.numdam.org/item?id=PMIHES_1994__80__5_0},
}

@article {UY,
    AUTHOR = {Uhlenbeck, K. and Yau, S.-T.},
     TITLE = {On the existence of {H}ermitian-{Y}ang-{M}ills connections in
              stable vector bundles},
      NOTE = {Frontiers of the mathematical sciences: 1985 (New York, 1985)},
   JOURNAL = {Comm. Pure Appl. Math.},
  FJOURNAL = {Communications on Pure and Applied Mathematics},
    VOLUME = {39},
      YEAR = {1986},
     PAGES = {S257--S293},
      ISSN = {0010-3640,1097-0312},
   MRCLASS = {58G05 (32L15 53C05 58E15)},
  MRNUMBER = {861491},
MRREVIEWER = {Daniel\ S.\ Freed},
       DOI = {10.1002/cpa.3160390714},
       URL = {https://doi.org/10.1002/cpa.3160390714},
}

@article {zhang2,
    AUTHOR = {Wu, Di and Zhang, Xi},
     TITLE = {Poisson metrics and {H}iggs bundles over noncompact {K}\"ahler
              manifolds},
   JOURNAL = {Calc. Var. Partial Differential Equations},
  FJOURNAL = {Calculus of Variations and Partial Differential Equations},
    VOLUME = {62},
      YEAR = {2023},
    NUMBER = {1},
     PAGES = {Paper No. 20, 29},
      ISSN = {0944-2669,1432-0835},
   MRCLASS = {53C07 (14J60 53C21)},
  MRNUMBER = {4505163},
MRREVIEWER = {Ivan\ Tulli},
       DOI = {10.1007/s00526-022-02343-z},
       URL = {https://doi.org/10.1007/s00526-022-02343-z},
}

@article {Cor,
    AUTHOR = {Corlette, Kevin},
     TITLE = {Flat {$G$}-bundles with canonical metrics},
   JOURNAL = {J. Differential Geom.},
  FJOURNAL = {Journal of Differential Geometry},
    VOLUME = {28},
      YEAR = {1988},
    NUMBER = {3},
     PAGES = {361--382},
      ISSN = {0022-040X,1945-743X},
   MRCLASS = {58E20 (32L99 53C10)},
  MRNUMBER = {965220},
MRREVIEWER = {John\ C.\ Wood},
DOI = {10.4310/jdg/1214442469},
       URL = {http://projecteuclid.org/euclid.jdg/1214442469},
}

@article {Don1,
    AUTHOR = {Donaldson, S. K.},
     TITLE = {Twisted harmonic maps and the self-duality equations},
   JOURNAL = {Proc. Lond. Math. Soc. (3)},
  FJOURNAL = {Proceedings of the London Mathematical Society. Third Series},
    VOLUME = {55},
      YEAR = {1987},
    NUMBER = {1},
     PAGES = {127--131},
      ISSN = {0024-6115,1460-244X},
   MRCLASS = {58E20 (32L15 53C05)},
  MRNUMBER = {887285},
MRREVIEWER = {Mitsuhiro\ Itoh},
       DOI = {10.1112/plms/s3-55.1.127},
       URL = {https://doi.org/10.1112/plms/s3-55.1.127},
}

@book {Ein,
    AUTHOR = {Besse, Arthur L.},
     TITLE = {Einstein manifolds},
    SERIES = {Ergebnisse der Mathematik und ihrer Grenzgebiete (3) [Results
              in Mathematics and Related Areas (3)]},
    VOLUME = {10},
 PUBLISHER = {Springer-Verlag},
ADDRESS   = {Berlin},
      YEAR = {1987},
     PAGES = {xii+510},
      ISBN = {3-540-15279-2},
   MRCLASS = {53C25 (53-02 53C21 53C30 53C55 58D17 58E11)},
  MRNUMBER = {867684},
MRREVIEWER = {S.\ M.\ Salamon},
       DOI = {10.1007/978-3-540-74311-8},
}

@incollection {fujiki,
    AUTHOR = {Fujiki, Akira},
     TITLE = {Hyper-{K}\"ahler structure on the moduli space of flat
              bundles},
 BOOKTITLE = {Prospects in complex geometry ({K}atata and {K}yoto, 1989)},
    SERIES = {Lecture Notes in Math.},
    VOLUME = {1468},
     PAGES = {1--83},
 PUBLISHER = {Springer},
ADDRESS   = {Berlin},
      YEAR = {1991},
      ISBN = {3-540-54053-9},
   MRCLASS = {53C07 (53C25 58D27 58E20)},
  MRNUMBER = {1123536},
MRREVIEWER = {N.\ J.\ Hitchin},
       DOI = {10.1007/BFb0086187},
}

@article {Kron1,
    AUTHOR = {Kronheimer, P. B.},
     TITLE = {The construction of {ALE} spaces as hyper-{K}\"ahler
              quotients},
   JOURNAL = {J. Differential Geom.},
  FJOURNAL = {Journal of Differential Geometry},
    VOLUME = {29},
      YEAR = {1989},
    NUMBER = {3},
     PAGES = {665--683},
      ISSN = {0022-040X,1945-743X},
   MRCLASS = {53C25 (14B07 32L10 32M10 53C55 83C20)},
  MRNUMBER = {992334},
MRREVIEWER = {Krzysztof\ Galicki},
DOI = {10.4310/jdg/1214443066},
       URL = {http://projecteuclid.org/euclid.jdg/1214443066},
}

@article {Nakajima,
    AUTHOR = {Nakajima, Hiraku},
     TITLE = {Instantons on {ALE} spaces, quiver varieties, and
              {K}ac–{M}oody algebras},
   JOURNAL = {Duke Math. J.},
  FJOURNAL = {Duke Mathematical Journal},
    VOLUME = {76},
      YEAR = {1994},
    NUMBER = {2},
     PAGES = {365--416},
      ISSN = {0012-7094,1547-7398},
   MRCLASS = {53C25 (17B67 58D27 58E15)},
  MRNUMBER = {1302318},
MRREVIEWER = {Andrew\ Dancer},
       DOI = {10.1215/S0012-7094-94-07613-8},
       URL = {https://doi.org/10.1215/S0012-7094-94-07613-8},
}

@article {Konno,
    AUTHOR = {Konno, Hiroshi},
     TITLE = {Construction of the moduli space of stable parabolic {H}iggs
              bundles on a {R}iemann surface},
   JOURNAL = {J. Math. Soc. Japan},
  FJOURNAL = {Journal of the Mathematical Society of Japan},
    VOLUME = {45},
      YEAR = {1993},
    NUMBER = {2},
     PAGES = {253--276},
      ISSN = {0025-5645,1881-1167},
   MRCLASS = {32G13 (14D20 32L07 53C07 58D27)},
  MRNUMBER = {1206652},
MRREVIEWER = {Steven\ B.\ Bradlow},
       DOI = {10.2969/jmsj/04520253},
       URL = {https://doi.org/10.2969/jmsj/04520253},
}

@article {Dey,
    AUTHOR = {Dey, Rukmini},
     TITLE = {Symplectic and hyperk\"ahler structures in a dimensional
              reduction of the {S}eiberg-{W}itten equations with a {H}iggs
              field},
   JOURNAL = {Rep. Math. Phys.},
  FJOURNAL = {Reports on Mathematical Physics},
    VOLUME = {50},
      YEAR = {2002},
    NUMBER = {3},
     PAGES = {277--290},
      ISSN = {0034-4877},
   MRCLASS = {53C26 (53D30 57R57 58D27)},
  MRNUMBER = {1952133},
MRREVIEWER = {Liviu\ I.\ Nicolaescu},
       DOI = {10.1016/S0034-4877(02)80058-1},
       URL = {https://doi.org/10.1016/S0034-4877(02)80058-1},
}

@article {feix,
    AUTHOR = {Feix, Birte},
     TITLE = {Hyperk\"ahler metrics on cotangent bundles},
   JOURNAL = {J. Reine Angew. Math.},
  FJOURNAL = {Journal f\"ur die Reine und Angewandte Mathematik. [Crelle's
              Journal]},
    VOLUME = {532},
      YEAR = {2001},
     PAGES = {33--46},
      ISSN = {0075-4102,1435-5345},
   MRCLASS = {53C26 (53C28)},
  MRNUMBER = {1817502},
MRREVIEWER = {Roger\ Bielawski},
       DOI = {10.1515/crll.2001.017},
       URL = {https://doi.org/10.1515/crll.2001.017},
}

@article {Mayrand,
    AUTHOR = {Mayrand, Maxence},
     TITLE = {Hyperk\"ahler metrics near {L}agrangian submanifolds and
              symplectic groupoids},
   JOURNAL = {J. Reine Angew. Math.},
  FJOURNAL = {Journal f\"ur die Reine und Angewandte Mathematik. [Crelle's
              Journal]},
    VOLUME = {782},
      YEAR = {2022},
     PAGES = {197--218},
      ISSN = {0075-4102,1435-5345},
   MRCLASS = {53C26 (32Q15 53D12)},
  MRNUMBER = {4360006},
MRREVIEWER = {Shangrong\ Deng},
       DOI = {10.1515/crelle-2021-0059},
       URL = {https://doi.org/10.1515/crelle-2021-0059},
}

@article {Tay,
    AUTHOR = {Del Moral, P. and Niclas, A.},
     TITLE = {A {T}aylor expansion of the square root matrix function},
   JOURNAL = {J. Math. Anal. Appl.},
  FJOURNAL = {Journal of Mathematical Analysis and Applications},
    VOLUME = {465},
      YEAR = {2018},
    NUMBER = {1},
     PAGES = {259--266},
      ISSN = {0022-247X,1096-0813},
   MRCLASS = {65F60 (41A58)},
  MRNUMBER = {3806701},
       DOI = {10.1016/j.jmaa.2018.05.005},
       URL = {https://doi.org/10.1016/j.jmaa.2018.05.005},
}

@article {Te,
    AUTHOR = {Arbarello, Enrico and Cornalba, Maurizio},
     TITLE = {Teichm\"uller space via {K}uranishi families},
   JOURNAL = {Ann. Sc. Norm. Super. Pisa Cl. Sci. (5)},
  FJOURNAL = {Annali della Scuola Normale Superiore di Pisa. Classe di
              Scienze. Serie V},
    VOLUME = {8},
      YEAR = {2009},
    NUMBER = {1},
     PAGES = {89--116},
      ISSN = {0391-173X,2036-2145},
   MRCLASS = {32G15},
  MRNUMBER = {2512202},
MRREVIEWER = {Kimio\ Miyajima},
DOI = {10.2422/2036-2145.2009.1.05},
}

\end{document}